\documentclass[reqno,a4paper,11pt]{amsart}
\usepackage[utf8]{inputenc}

\setlength{\textwidth}{\paperwidth}
\addtolength{\textwidth}{-2in}
\calclayout

\setcounter{tocdepth}{1}

\usepackage[english]{babel}
\usepackage{helvet}          
\usepackage{courier}         
\usepackage{upgreek}
\usepackage{amsthm}
\usepackage{bm} 
\usepackage{bbm} 
\usepackage{float} 
\usepackage{amsmath, amsthm, amssymb, tikz, enumerate, dsfont, csquotes, xcolor, tabularx, diagbox}
\usepackage{booktabs}
\usepackage{neuralnetwork}
\usepackage{pgfplots}
\pgfplotsset{compat=1.15}
\usepackage{mathrsfs}
\usepackage[colorlinks=true,linkcolor=blue,citecolor=magenta]{hyperref}
\usepackage[T1]{fontenc}
\usepackage{xcolor}

\usetikzlibrary{calc, through, intersections, shapes.geometric}
\usepackage{seqsplit}

\DeclareMathOperator{\sgn}{sgn}

\newcommand{\esp}[1]{\mathbb{E}\left[#1\right]}
\newcommand{\prob}[1]{\mathbb{P}(#1)}
\newcommand{\btt}{\triangleright}
\newcommand{\wst}{\triangleleft}

\newtheorem{lemma}{Lemma}
\newtheorem{theorem}[lemma]{Theorem}
\newtheorem{claim}[lemma]{Claim}

\newtheorem{definition}[lemma]{Definition}

\newtheorem{proposition}[lemma]{Proposition}
\newtheorem{assumption}[lemma]{Assumption}
\newtheorem{example}[lemma]{Example}
\theoremstyle{remark}
\newtheorem{remark}[lemma]{Remark}

\newcommand{\mc}[1]{{\mathcal #1}} 
\newcommand{\mf}[1]{{\mathfrak #1}} 
\newcommand{\mb}[1]{{\mathbf #1}} 
\newcommand{\bb}[1]{{\mathbb #1}}

\newcommand{\wt}[1]{{\widetilde{#1}}} 
\newcommand{\ind}{\mathbbm{1}}

\newcommand{\f}{{\mf f}}

\numberwithin{equation}{section}

\newcommand{\dado}[6]{
    \node (pol#1) [draw=black, thick, fill=blue!60!green,rotate=0, fill opacity=0.2, text opacity=1,minimum size=3cm,regular polygon, regular polygon sides=#2] at #4 {$#1$};

    \foreach \n [count=\nu from 1] in #3
        \coordinate (#1coord\nu) at (pol#1.corner \nu);
    \coordinate (#1coord0) at (pol#1.corner #2);

    \foreach \n [count=\nu from 1, count=\nuu from 0] in #3
        \node[label={[label distance=2cm]},anchor=(\nu+(#2+1)/2 #5)*(360/#2)]at($(#1coord\nuu)!0.5!(#1coord\nu)$){$\n$};

    \foreach \n [count=\nu from 1, count=\nuu from 0] in #6
        \node[ anchor=(\nu+1/2 #5)*(360/#2)]at($(#1coord\nuu)!0.5!(#1coord\nu)$){$\n$};
}

\newcommand{\var}[1]{\operatorname{Var}\left(#1\right)}
\newcommand{\cov}[1]{\operatorname{Cov}\left(#1\right)}
\newcommand{\corr}[1]{\operatorname{Corr}\left(#1\right)}

\makeatletter
\renewenvironment{proof}[1][Proof]{\par
\pushQED{\qed}%
\normalfont \topsep6\p@\@plus6\p@\relax
\trivlist
\item\relax
{\itshape
#1\@addpunct{.}}\hspace\labelsep\ignorespaces
}{%
\popQED\endtrivlist\@endpefalse
}
\makeatother

\DeclareMathOperator{\ee}{\rm e}

\allowdisplaybreaks

\let\oldtocsection=\tocsection
\let\oldtocsubsection=\tocsubsection
\let\oldtocsubsubsection=\tocsubsubsection
\renewcommand{\tocsection}[2]{\hspace{0em}\oldtocsection{#1}{#2}}
\renewcommand{\tocsubsection}[2]{\hspace{1em}\oldtocsubsection{#1}{#2}}
\renewcommand{\tocsubsubsection}[2]{\hspace{2em}\oldtocsubsubsection{#1}{#2}}
\DeclareRobustCommand{\SkipTocEntry}[5]{}

\begin{document}

\title{A Central Limit Theorem for Intransitive Dice}

\author[L.\ Coelho]{Luis\ G.\ Coelho}
\address[LC]{Faculdade de Filosofia, Ci\^encias e Letras de Ribeir\~ao Preto, Universidade de S\~ao Paulo, Brazil}
\email{luisguilhermecoelho@usp.br}

\author[T.~Franco]{Tertuliano Franco}
\address[TF]{UFBA\\
 Instituto de Matem\'atica, Campus de Ondina, Av. Adhemar de Barros, S/N. CEP 40170-110\\
Salvador, Brazil}
\email{tertu@ufba.br}

\author[L.\ Lima]{Lael\ V.\ Lima}
\address[LL]{Instituto de Matemática, Estatística e Computação Científica, Universidade Estadual de Campinas, Brazil}
\email{l176809@dac.unicamp.br}

\author[J.\ de Paula]{João\ P.\ C.\ de Paula}
\address[JPa]{Instituto de Matemática, Estatística e Computação Científica, Universidade Estadual de Campinas, Brazil}
\email{j246527@dac.unicamp.br}

\author[J.\ Pimenta]{João\ V.\ A.\ Pimenta}
\address[JP]{Instituto de F\'isica de S\~ao Carlos, Universidade de S\~ao Paulo, Brazil}
\email{joaovictorpimenta@usp.br}

\author[G.\ Silva]{Guilherme\ L.\ F.\ Silva}
\address[GS]{Instituto de Ciências Matemáticas e de Computação, Universidade de S\~ao Paulo, Brazil.}
\email{silvag@usp.br}

\author[D.\ Ungaretti]{Daniel Ungaretti}
\address[DU]{Instituto de Matem\'atica, Universidade Federal do Rio de Janeiro, Brazil}
\email{daniel@im.ufrj.br}

\date{}

\begin{abstract}
    Intransitive dice $D^{(1)}, \ldots, D^{(\ell)}$ are dice such that $D^{(1)}$ has advantage when played against $D^{(2)}$, dice $D^{(2)}$ has advantage when played against $D^{(3)}$ and so on, up to $D^{(\ell)}$, which has advantage over $D^{(1)}$.  In this twofold work, we first present (deterministic) results on the existence of general intransitive dice. Second and mainly, a central limit theorem for the vector of normalized victories of a die against the next one in the list when the faces of a die are i.i.d.\ random variables and all dice are independent, but different dice may have distinct distributions associated with them, as well as they may have distinct numbers of faces. Exploiting this central limit theorem, we derive two major consequences. First, we are able to obtain first order exponential asymptotics for the number of $\ell$-tuples of intransitive dice, when the number of faces of the dice grows. Second, we obtain a criterion to ensure that the asymptotic  probability of observing intransitive dice is null, which applies to many cases, including all continuous distributions and many discrete ones.

\end{abstract}

\maketitle

\tableofcontents
\allowdisplaybreaks

\section{Introduction}

Intransitivity is an inherent facet of nature, it is part of the equilibrium in evolutionary dynamics, where different relations between predators and prey create the balance for common existence. This phenomenon is noted for instance in the eighteenth-century Condorcet's paradox, in which three candidates are intransitive in the sense that the candidate $A$ wins when running against $B$, the candidate $B$ wins when running against candidate $C$ and the candidate $C$ wins when running against candidate $A$. It is worth mentioning that Condorcet's paradox is intrinsically related to the classical Arrow's Theorem. Intransitivity also manifests itself in sports leagues, network relations, interactions between different medications, and an ever-expanding array of scenarios.

While being a fundamental mathematical concept, intransitivity can lead to intriguing outcomes even in simple models. Consider a basic dice game as an example: there are two players, each tosses a (possibly different) die and the one with the highest outcome wins. It is possible to construct three dice, $ A $, $ B $ and $ C $, for which $ A $ is better than $ B $ (in the sense that the player with die $ A $ has a higher chance of winning against the player with die $ B $), $ B $ is better than $ C $ and $ C $ is better than $ A $?
What about constructing an intransitive chain of more than three dice? And dice with a very large number of faces?

To the best of our knowledge, the first examples of intransitive chains of dice first appeared in the literature in Martin Gadner's column \cite{Gardner}, where the author mentions a previous construction from the sixties by Bradley Effron, consisting of the dice
\begin{align*}
&A=(0, 0, 4, 4, 4, 4),\quad
B=(3, 3, 3, 3, 3, 3),\\
&C = (2, 2, 2, 2, 6, 6),\quad
D = (1, 1, 1, 5, 5, 5).
\end{align*}
For those dice, it holds that the probability of $A$ beats $B$, $B$ beats $C$, $C$ beats $D$ and $D$ beats $A$ are all equal to $2/3$. Intransitive dice are also natural	 examples of the
\textit{Steinhaus and Trybu{\l}a's paradox} (see \cite{Steinhaus, Lebedev}), consisting on the existence of independent random variables
$X$, $Y$, and $Z$ such that $\bb P(X > Y ) > 1/2$,
$\bb P(Y > Z ) > 1/2$, and $\bb P(Z > X ) > 1/2$.
The property of intransitivity can be found in various domains, such as Statistics \cite{Brown_Hett} and voting systems \cite{PTRF}, to mention only a few.

From a probabilistic point of view, there has been a recent upraise of interest
in intransitive dice phenomena. In part, such a recent trend started with a
discussion by Conrey, Gabbard, Grant, Liu and Morrisson in  \cite{CGGLM}. Therein, the authors considered a model of random dice where the $n$ faces of a given random die are given by uniformly choosing $n$ entries among positive integers conditioned to sum to $n(n+1)/2$, which they called \textit{a balanced model}. For instance, for $n=4$, the faces of a die are chosen by picking uniformly one of the  multisets below:
\begin{equation*}
(1, 1, 4, 4),\quad (1, 2, 3, 4),\quad (1, 3, 3, 3),\quad (2, 2, 2, 4), \quad (2, 2, 3, 3)\,.
\end{equation*}
Following \cite{CGGLM}, choose a set of three random independent dice from the balanced model. With a strong support by computational evidence, in \cite{CGGLM} they also propose two conjectures: first, that the asymptotic probability of ties between any two of the three dice is zero, and second, that the asymptotic probability that these three dice form an intransitive chain is $1/4$ (see also \cite{Morrison} which evaluates some exact probabilities for three and four dice). These two conjectures were later proved by a Polymath project \cite{polymath}, and several other results on the balanced model and other related models of random dice with constrained sum faces were further explored in various recent works \cite{PTRF2}. Intransitive dice may also be interpreted through tournament graphs, see \cite{Akin21, AkinSaccamano21} for further results in such direction.

In the present work, we ask ourselves about the existence of intransitive chains of dice both from deterministic and probabilistic perspectives. We consider an arbitrary number of dice, each of them with an arbitrary number of faces, without any constraints regarding the sum of the faces.

The first part of this paper is devoted to studying the existence of intransitive dice: in a deterministic setup that does not allow for ties among different dice faces, we rediscover a characterization from \cite{Schaefer}, describing when an ordered collection of intransitive dice exists, in terms of the size of the dice and the number of different entries used for the faces. In short terms, such a result says that there are always intransitive collections with arbitrarily large number of faces, provided each die has at least $3$ faces. Naturally, one is faced with the question of how many of these collections there are. As a first novel result, we are able to show that the proportion of ordered collections of intransitive dice among all possible collections (not necessarily intransitive) decays with the number of faces of the dice, and we compute the leading term in the exponential decay rate asymptotics explicitly.

For both the previously mentioned existence results and decay of the proportion of intransitive dice, the key observation is a bijection between collections of dice, not necessarily intransitive, and words with appropriate number of letters. We explore this connection to construct, from a given collection of intransitive dice, a new collection with a larger number of faces of each die or with a larger number of dice, while preserving the intransitivity. This construction is algorithmic, and as we mentioned it is based on the connection between intransitive dice and words with a particular combinatorial property which may be of independent interest.

The second part of this paper deals with models of random dice, where the entries on the faces of a given die are independent random variables with the same distribution, but the distributions generating different dice may vary. Our main interest lies in determining the chance that a finite collection of random dice is intransitive, when the number of faces of each die grows large. To do so, we prove a central limit theorem for the vector of number of victories of the faces of die against the faces of the next die in the list (whose entries are strongly correlated). The proof of this CLT  is based on the moment method, where the crucial steps consist of a combinatorial bijection between moments and an appropriate subclass of graphs, and a careful estimate of the number of such graphs. 

The vector of victories is actually connected to intransitivity, which is simple to illustrate when each die has $n$ faces with no ties: in this particular case, intransitity of the list of dice is  equivalent to the fact that each entry of the vector of victories is larger than $n^2/2$.
This can be properly generalized to account for possible ties among entries, and when combined with the CLT obtained, we are able to deduce a criterion to compute the probability of intransitivity in terms of a Gaussian probability.

Such result is obtained under appropriate but natural conditions on the distribution of the random variables determining the faces. \textit{Grosso modo}, the conditions are bounds on the number of ties and on the variance of victories of a die against another one in the list, avoiding degeneracy in the central limit setting, with appropriate first order asymptotics for their means.  These mild conditions cover many situations. For instance, it  includes the  scenarios where  all dice have same distribution (including all continuous distributions and many discrete ones), and also situations where the underlying distributions of faces depend on scaling parameters. And also the case where dice have different distributions in some cases. We also provide a way of constructing asymptotic intransitive dice (not satisfying the previous conditions, of course), which is argued via a concentration inequality.

We now move forward to the discussion of our main findings.

\section{Statement of results}\label{s:statements}

We split the discussion of these major results into two subsections, first for deterministic dice and then for random dice.
As we hope to convey with this text, simple models of dice display rather interesting and rich aspects worth investigating deeper. However, many interesting phenomena may depend on somewhat subtle specific features of the model considered. Nevertheless, questions surrounding intransitive dice phenomena are rather simple to state. For the latter reason, we mostly introduce new terminology and notation along the text, reserving formal definitions solely for more technical assumptions needed. For convenience, such definitions along the text are highlighted in bold.

\subsection{Main results for deterministic models of dice}\label{sec:determdice}
\hfill

An $n$\textbf{-sided die} is a pair $(D,X)$, where $D=(D_1,\hdots, D_n)$ is a real-valued vector where each $D_k$ represents the number on the $k$-th face, and $X$ is a random variable taking values on $[n] \coloneqq\{1, 2, \hdots, n\}$ that represents the label of the face in the outcome of a toss. The number $n$ is the number of faces, or simply size, of the die $D$.
The die is said to roll the face $k$ with probability $\prob{X=k}$ and results $D_k$. If this probability equals $1/n$ for every $k$, the
die is \textbf{honest} or \textbf{fair}. Otherwise the die is \textbf{unfair} or \textbf{biased}. If there is no ambiguity, the die will be denoted as $D$, and in that case, it is
useful to denote the random result of $D$ in a roll by $D_X$. Thus, in general, the entries of $D$ need not be integer-valued, nor even positive. We reserve capital letters $A$, $B$, $C$ etc.\ to represent dice, and lower indices $A_i$, $B_i$ etc.\ to represent a entries of the dice $A$, $B$. It is also useful to distinguish different dice with an upper index, writing for instance $D^{(1)}$, $D^{(2)}$ etc, and the corresponding entries by $D^{(1)}_i$, $D^{(2)}_i$ etc.

A die \(A\) is said to be {\bfseries better than} a die \(B\), and it is
denoted by \(A \btt B\) if the probability of \(A\) rolling a higher value than
\(B\) is greater than the probability of \(B\) rolling a higher value than
\(A\). To the same extent, the die \(B\) is said to be {\bfseries worse than}
\(A\), and it is denoted by \(B\wst A\).

In mathematical terms, one way to verify whether a fair die $A$ is better than a fair die $B$ is by counting against how many faces of $B$ a given face of $A$ wins, summing the result over all possible faces of $A$, and comparing with the count we obtain when we do the same interchanging the roles of $A$ and $B$. In other words, $A\btt B$ if, and only if, the inequality
$$
\sum_{A_i>B_j}1 \;>\;\sum_{B_j>A_i}1
$$
is satisfied. With $n_A$ and $n_B$ being the number of faces of $A$ and $B$, respectively, there are in total $n_An_B$ pairs of faces from $A$ and $B$ to compare, and $A\btt B$ if, and only if,
\begin{equation}\label{eq:fundcomparison}
\sum_{A_i>B_j}1 \;>\;
\frac{1}{2}n_An_B-\frac{1}{2}\sum_{A_i=B_j}1\,.
\end{equation}

An ordered collection of dice $\mb D=(D^{(1)},\dots,D^{(\ell)})$ is said to be \textbf{intransitive} if \(D^{(1)} \btt
\cdots \btt D^{(\ell)}\btt D^{(1)}\).  Note that while $\btt$ is an asymmetric relation,
it is not necessarily transitive, so it does not define an order relation. When
computing whether a given collection $\mb D=(D^{(1)},\hdots,D^{(\ell)})$ of
dice is intransitive, the ordering of the entries does matter, and it is possible that $\mb D$ is not
intransitive, but for some permutation $\sigma$ of length $\ell$ a reordering $(D^{(\sigma(1))},\hdots, D^{(\sigma(\ell ))})$ is intransitive. We will be interested in existence results for deterministic collections $\mb D$, and in asymptotic results when the distributions of the entries of each die are rather arbitrary, so this ordering will not be relevant in any essential way.

By a \textbf{no-tie collection} of dice we mean that no pair of faces, either from the same die or from different dice, shares the same number.

Our first two results concern intransitive families of deterministic dice. The first one deals with the existence of intransitive, fair dice.
        \begin{theorem}\label{different_faces}
        Consider dice whose face entries are positive integers.
                For every \(\ell \geq 3\) and \(n \geq 3\) there exists a no-tie collection of $\ell$ honest $n$-sided dice which is intransitive.
                Furthermore, for any $\ell\geq 3$ there does not exist a no-tie family of $\ell$ honest $2$-sided dice which is intransitive.
        \end{theorem}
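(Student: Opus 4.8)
I would prove the second (non-existence) assertion first, as a warm-up. The claim is that in any no-tie collection of honest $2$-sided dice the die carrying the globally largest face can never be beaten, which immediately rules out an intransitive cycle of length $\ell\ge 3$ (in such a cycle every die, in particular this one, is beaten by its predecessor). Indeed, write this die as $A=(a,M)$ with $M$ the global maximum, and let $B=(b_1,b_2)$ be any other die. Among the four pairwise comparisons, $M$ beats both $b_1$ and $b_2$, so $A$ wins at least $2$ of them; since all faces are distinct there are no ties, so $B$ wins at most $2$, and hence $B\not\btt A$ by \eqref{eq:fundcomparison}. That is all there is to it.

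For the existence assertion, the plan is to start from a handful of small explicit examples for $\ell=3$ and enlarge them by two elementary operations, one raising the common number of faces and one raising the number of dice, both tailored to preserve intransitivity and the no-tie property. For $\ell=3$ one checks by hand that $(2,6,7),(1,5,9),(3,4,8)$ is intransitive with $n=3$, that $(2,4,9,11),(1,7,8,10),(3,5,6,12)$ is intransitive with $n=4$, and that adjoining the faces $13,15,14$ respectively produces an intransitive triple with $n=5$; moreover each of these can be (re)labelled so that the cyclic order of victories reads $D^{(1)}\btt D^{(2)}\btt D^{(3)}\btt D^{(1)}$.

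The main engine is a \emph{concatenation} operation. Given two intransitive collections $\mb D,\mb D'$ of $\ell$ honest dice with the same cyclic victory order, $\mb D$ having dice of size $n$ and $\mb D'$ dice of size $n'$, relabel (order-preservingly) the faces of $\mb D'$ so they all exceed those of $\mb D$, and let the $i$-th die of the new collection be the union of the $i$-th dice of $\mb D$ and $\mb D'$. Bookkeeping the $(n+n')^2$ comparisons between dice $i$ and $j$ shows that the two ``mixed'' blocks contribute $+nn'$ and $-nn'$ and cancel, so the victory margin of the new collection is the sum of the two old margins; hence it is again intransitive and no-tie, with all dice of size $n+n'$. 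Since every integer $n\ge 3$ is a non-negative combination of $3,4,5$, concatenating copies of the three base triples gives intransitive no-tie triples of honest $n$-sided dice for every $n\ge 3$. To increase $\ell$, I would insert a new die just before $D^{(1)}$: rescale $D^{(1)},\dots,D^{(\ell)}$ by a large integer $M$ (this changes no comparison) and set $E_k:=MD^{(\ell)}_k-1$. Since the $D^{(\ell)}_k$ and the $D^{(1)}_j$ are distinct integers, $E_k>MD^{(1)}_j$ exactly when $D^{(\ell)}_k>D^{(1)}_j$, so $E\btt D^{(1)}$ just as $D^{(\ell)}\btt D^{(1)}$; and $E_k>MD^{(\ell)}_j$ exactly when $D^{(\ell)}_k>D^{(\ell)}_j$, which holds for only $\binom n2<n^2/2$ pairs, so $D^{(\ell)}\btt E$ by \eqref{eq:fundcomparison}. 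Inspecting residues modulo $M$ shows all faces remain distinct, so $D^{(1)},\dots,D^{(\ell)},E$ is an intransitive no-tie family of $\ell+1$ honest $n$-sided dice; iterating from the base triples covers every $\ell\ge 3$ and $n\ge 3$.

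The $2$-sided case is easy. The real work is checking that the two enlargement operations preserve the strict inequalities in \eqref{eq:fundcomparison}, i.e.\ tracking the effect of each operation on every pairwise victory count: the point is that both are engineered so that every margin is perturbed in a controlled way --- the concatenation because the mixed blocks cancel and margins simply add, the insertion because $E$ is a small perturbation of $D^{(\ell)}$ that stays strictly below $D^{(\ell)}$ while copying its behaviour against $D^{(1)}$. Phrasing the dice as words (as alluded to in the introduction) makes these margin computations especially transparent, and is probably the cleanest way to organize the details.
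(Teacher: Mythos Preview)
Your proof is correct, and structurally parallel to the paper's---base cases plus two enlargement operations, one in $n$ and one in $\ell$---but the individual steps differ in interesting ways.

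For the $n=2$ impossibility, the paper works through the word bijection of Section~\ref{sec:dicewords}: it shows that whenever $D^{(j)}\btt D^{(k)}$ the $2$-letter subword in $\{D^{(j)},D^{(k)}\}$ must be one of two specific patterns, and chases a copy of $D^{(1)}$ to the left of both copies of $D^{(\ell)}$, contradicting $D^{(\ell)}\btt D^{(1)}$. Your ``global maximum'' argument is more direct and avoids the word machinery entirely; it is arguably the cleanest proof of this half.

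For increasing $n$, the paper concatenates an intransitive word with a fixed \emph{neutral} word of length $2\ell$ (Lemma~\ref{neutral string}), which leaves all margins unchanged and takes $n\mapsto n+2$; two base cases $n=3,4$ then suffice. You instead concatenate two intransitive collections, so margins \emph{add}; this is exactly the superadditivity identity \eqref{eq:Nijconcat} the paper later uses to prove Theorem~\ref{thm2}, and it forces you to supply a third base case $n=5$ since $5$ is not a sum of $3$'s and $4$'s. Either device works; yours is slightly more flexible, the paper's needs fewer base examples.

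For increasing $\ell$, the two constructions are the same idea in different clothing: the paper's word substitution $D^{(\ell)}\mapsto D^{(\ell)}D^{(\ell+1)}$ (Lemma~\ref{lem:WellWellplus}) inserts a new letter immediately to the right of each $D^{(\ell)}$, which in the dice picture is precisely your ``scale by $M$ and subtract $1$'' perturbation. Your residue-mod-$M$ check for distinctness plays the role that the word picture makes automatic.
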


This result was already proved in \cite[Theorem~2.1]{Schaefer}, with a direct construction. We provide a different proof, with an approach using a bijection between dice and words, which we explain in Section~\ref{sec:dicewords}. This approach also lies at the core of many of our novel results, so we decided to also present Theorem~\ref{different_faces} here, to illustrate our techniques.

The notion of a die $A$ being better than $B$ is not a relation on the specific numbers on their faces, but rather between the relative ordering of these numbers. For instance, the die $A=(2,4,9,10,11)$ is better than the die $B=(1,5,7,9,10)$. Now, increase, say, the first entry of $A$ to a new die $\widetilde A=(x,4,9,10,11)$ with any choice $x={3,4}$. Then when we choose and roll either one of dice $A$ or $\widetilde A$, the chance of winning against a roll of die $B$ is the same. So, in terms of \textit{chance of winning} against $B$, dice $A$ and $\widetilde A$ are indistinguishable.

In that sense, when we talk about comparison of $\ell$ non-tie dice with $n$ faces each, it suffices to distribute the numbers in the set $[\ell n]$ among the faces of the dice, without repetition. In fact, the proof of the existence claim in Theorem~\ref{different_faces} is inductive/constructive, and shows that such $\ell$ honest $n$-sided dice can always be chosen with distinct entries in $[\ell n]$.

For a given choice of positive integers $n_1,\hdots,n_\ell$, let $\mc D(n_1,\hdots, n_\ell)$ be the set of collections of dice $\mb D=(D^{(1)},\hdots, D^{(\ell)})$ for which
$D^{(j)}$ has exactly $n_j$ faces, and where each number in $[n_1+\cdots +n_\ell]$ appears exactly once in the faces in $\mb D$. In other words, the dice are filled with numbers in $[n_1+\cdots +n_\ell]$, without repetition. Observe that with this definition, in $\mc D(n_1,\hdots, n_\ell)$ we do not distinguish between the ordering of faces in each die. Or, alternatively, dice in $\mc D(n_1,\hdots, n_\ell)$ are always viewed in increasing order, so that for instance the dice $(1,2,3)$ and $(2,1,3)$ are the same and are always represented by $(1,2,3)$. But we do distinguish between orderings within a collection, so that the collections $\mb D=((1,2,4),(3,5,6)),\widehat{\mb D}=((3,5,6),(1,2,4))$ are distinct elements of $\mc D(3,3)$.  In other words,
\begin{multline*}
\mc D(n_1,\hdots, n_\ell)\;\coloneqq\; \Big\{
\mb D=(D^{(1)},\hdots, D^{(\ell)}): D^{(j)}=(D^{(j)}_1,\hdots D^{(j)}_{n_j})\in \mathbb Z^{n_j}, \\ 0<D^{(j)}_1<\cdots<D^{(j)}_{n_j} \text{ for } j=1,\hdots,\ell,
D^{(j_1)}_{i_1}\neq D^{(j_2)}_{i_2} \text{ for }j_1\neq j_2, \{D^{(j)}_i\}_{i,j}=[n_1+\cdots+n_\ell]
\Big\}.
\end{multline*}

We denote by $\mc D_\btt (n_1,\hdots,n_\ell)$ the subset of $\mc D (n_1,\hdots,n_\ell)$ that consists of intransitive dice, that is,
$$
\mc D_{\btt}(n_1,\hdots, n_\ell)\;\coloneqq\; \big\{\mb D\in \mc D(n_1,\hdots,n_\ell): D^{(1)}\btt \cdots \btt D^{(\ell)}\btt D^{(1)}\big\},
$$
and additionally also set
\begin{equation}\label{eq:defDellbtt}
\mc D_\ell(n)\;\coloneqq\;
\mc D(\underbrace{n,\hdots, n}_{\ell \text{ times}}),\quad \mc D_{\btt,\ell}(n)\;\coloneqq\;
\mc D_\btt(\underbrace{n,\hdots, n}_{\ell \text{ times}})\,.
\end{equation}

We stress that the definition of the sets $\mc D_\ell(n)$ and $\mc D_{\btt,\ell}(n)$ as above do not account for possible permutations of dice when checking intransitivity, that is, the list of dice has a fixed order. We will later on deal with {\it random dice}, and because they will have arbitrary distributions the difference between considering or not considering possible permutations will not be important in a relevant way.


Exploring a connection between non-tie dice with integer entries and the set of words in a given alphabet, briefly outlined below and explained in detail in Section~\ref{sec:dicewords}, we will be able to estimate the size of $\mc D_{\btt, \ell}(n)$.
\begin{theorem}\label{thm2}
For each $\ell\geq 3$, there exists a constant $L(\ell)\geq 0$ for which
$$
|\mc D_{\btt,\ell}(n)|\;=\;\ee^{nL(\ell)+o(n)}\quad \text{as }n\to \infty\,.
$$
\end{theorem}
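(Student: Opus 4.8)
The plan is to establish the existence of the exponential rate $L(\ell)$ via a subadditivity (Fekete) argument applied to the sequence $a_n \coloneqq \log |\mc D_{\btt,\ell}(n)|$, after first translating the problem into the language of words. Recall from the bijection outlined in Section~\ref{sec:dicewords} that a no-tie collection $\mb D \in \mc D_\ell(n)$ corresponds to a word of length $\ell n$ in the alphabet $\{1,\dots,\ell\}$ in which each letter appears exactly $n$ times (the $k$-th symbol records which die owns the $k$-th smallest global face value), and intransitivity of $\mb D$ is a combinatorial condition on this word expressible through the comparison inequalities \eqref{eq:fundcomparison}. So $|\mc D_{\btt,\ell}(n)|$ counts a certain class of balanced words, and the goal becomes showing this count grows like $\ee^{nL(\ell)+o(n)}$.

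\textbf{Step 1: a concatenation/interleaving operation.} I would define a way to merge an intransitive collection with $m$ faces per die and one with $n$ faces per die into an intransitive collection with $m+n$ faces per die. On the level of words, the natural candidate is concatenation: given words $w$ (balanced of length $\ell m$, intransitive) and $w'$ (balanced of length $\ell n$, intransitive), form $w w'$, which is balanced of length $\ell(m+n)$. The key point to verify is that $ww'$ is again intransitive: for each consecutive pair of dice, the number of face-pair victories in the merged collection splits as (victories among the low $m$ faces) $+$ (victories among the high $n$ faces) $+$ (cross terms), and because every face in the $w$-block is globally smaller than every face in the $w'$-block, the cross term contributes exactly $n_A n_B$ wins to the die whose block comes second — which is split evenly between the two dice in the sense needed. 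One must check that the strict inequality \eqref{eq:fundcomparison} is preserved; this should follow because the deficits/surpluses from the two blocks add. If plain concatenation does not immediately give strictness for every pair, a small fixed gadget inserted between the blocks (of bounded size, independent of $m,n$) can be used instead, at the cost of an $O(1)$ correction that does not affect the exponential rate.

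\textbf{Step 2: superadditivity and Fekete.} The merging operation of Step~1 is injective (one recovers $w$ and $w'$ by splitting the merged word at position $\ell m$), so it yields $|\mc D_{\btt,\ell}(m+n)| \ge |\mc D_{\btt,\ell}(m)| \cdot |\mc D_{\btt,\ell}(n)|$, i.e.\ $a_{m+n} \ge a_m + a_n$. By Theorem~\ref{different_faces} the set $\mc D_{\btt,\ell}(n)$ is nonempty for every $n\ge 3$ (so the $a_n$ are eventually finite), and trivially $a_n \le \log|\mc D_\ell(n)| = \log\binom{\ell n}{n,\dots,n} \le \ell n \log \ell$, giving the upper bound $a_n/n \le \log\ell$. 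Fekete's lemma for superadditive sequences then gives $\lim_{n\to\infty} a_n/n = \sup_n a_n/n =: L(\ell) \in [0,\log\ell]$, which is exactly the claimed asymptotics $|\mc D_{\btt,\ell}(n)| = \ee^{nL(\ell)+o(n)}$. Nonnegativity of $L(\ell)$ is automatic since $a_n \ge 0$ once $\mc D_{\btt,\ell}(n)\neq\varnothing$.

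\textbf{Main obstacle.} The delicate point is Step~1: verifying that the merging operation genuinely preserves intransitivity, including the \emph{strict} inequalities, for all $\ell$ pairs of consecutive dice simultaneously. Concatenation handles the cross terms cleanly, but one has to be careful that stacking a $w'$-block on top of a $w$-block does not accidentally flip a comparison when the surplus in one block is small; ensuring a uniform strict margin may require either (a) restricting to intransitive collections with a quantitative margin bounded below by a constant and checking this subclass still has the full exponential rate, or (b) the bounded-size connector gadget mentioned above. Either fix is routine but needs to be spelled out. Once the superadditive inequality is in hand, the rest is a direct application of Fekete's lemma.
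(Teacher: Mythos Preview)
Your approach matches the paper's exactly: concatenation of two intransitive words is again intransitive, giving superadditivity of $\log|\mc D_{\btt,\ell}(n)|$, and Fekete's lemma finishes. Your ``main obstacle'' is not one --- once you write out the cross term you see that both $N_{k,k+1}$ and $N_{k+1,k}$ gain exactly $n_1 n_2$ under concatenation, so $N_{k,k+1}(\mb W_1\mb W_2)=N_{k,k+1}(\mb W_1)+n_1n_2+N_{k,k+1}(\mb W_2)>\tfrac{n_1^2}{2}+n_1n_2+\tfrac{n_2^2}{2}=\tfrac{(n_1+n_2)^2}{2}$ and strict inequality is preserved automatically with no gadget needed; as a minor slip, the trivial upper bound should read $a_n/n \le \ell\log\ell$, not $\log\ell$.
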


Naturally, we are asked about the value of $L(\ell)$. For any $n\geq 1$, a simple combinatorial argument shows that $|\mc D_{\ell}(n)|=(\ell n)!/(n!)^\ell$, and by combing Theorem~\ref{thm2} with Stirling's approximation we see that
\begin{equation}\label{eq:subexpondecay}
\frac{|\mc D_{\btt,\ell}(n)|}{|\mc D_{\ell}(n)|}\;=\; \ee^{-n(\ell\log \ell-L(\ell))+o(n) } \,.
\end{equation}

Since $\mc D_{\btt,\ell}(n) \subset \mc D_\ell(n)$, we obviously have $L(\ell)\leq \ell\log \ell$. Equipping $\mc D_{\ell}(n)$ with the uniform distribution, one may view the quantity $\frac{|\mc D_{\btt,\ell}(n)|}{|\mc D_\ell(n)|}$ as the probability of selecting an $\ell$-uple of intransitive dice from this distribution. As a consequence of Theorem~\ref{thm:nointransitive} to be seen in a moment, applied to random dice with uniform law on $[0,1]$ we can infer that
$$
\lim_{n\to \infty}\frac{|\mc D_{\btt,\ell}(n)|}{|\mc D_{\ell}(n)|}\;=\;0\,.
$$

It is thus natural to ask whether the decay \eqref{eq:subexpondecay} is exponential or sub-exponential in $n$, and our next main result in fact shows that it is sub-exponential.

\begin{theorem}\label{thm:ellvalue}
For any $\ell\geq 3$, $L(\ell)=\ell\log \ell$.
\end{theorem}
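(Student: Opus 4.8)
The plan is to show $L(\ell)\geq \ell\log\ell$, which combined with the trivial bound $L(\ell)\leq\ell\log\ell$ noted above gives equality. Equivalently, by Theorem~\ref{thm2} and the formula $|\mc D_\ell(n)|=(\ell n)!/(n!)^\ell$, it suffices to prove that a uniformly random collection in $\mc D_\ell(n)$ is intransitive with probability decaying \emph{sub-exponentially} in $n$; that is, for every $\varepsilon>0$ one has $|\mc D_{\btt,\ell}(n)|\geq \ee^{-\varepsilon n}|\mc D_\ell(n)|$ for all large $n$. The natural route is to produce, inside $\mc D_\ell(n)$, a subfamily of intransitive collections whose cardinality is at least $\ee^{-o(n)}|\mc D_\ell(n)|$.

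The key idea is a \emph{block construction}: split $[\ell n]$ into $k$ consecutive blocks $I_1,\ldots,I_k$ of size $\ell n/k$ each (take $k$ a fixed large constant and assume $k\mid n$ for simplicity), and within each block place a \emph{fixed} small intransitive pattern — concretely, take an intransitive collection $\mb E=(E^{(1)},\ldots,E^{(\ell)})\in \mc D_{\btt,\ell}(n/k)$ guaranteed by Theorem~\ref{different_faces}, and let die $D^{(j)}$ receive, inside block $I_r$, the faces prescribed by $E^{(j)}$ shifted into $I_r$. Since the comparison $D^{(j)}\btt D^{(j+1)}$ only depends on relative order, and faces from different blocks never tie and compare in the ``trivial'' way (everything in $I_r$ beats everything in $I_{r'}$ for $r>r'$, contributing the \emph{same} amount $\binom{k}{2}(n/k)^2$ to every pairwise victory count), the cross-block contributions cancel in the inequality \eqref{eq:fundcomparison} and the within-block contributions simply add up $k$ copies of the strict intransitive inequality for $\mb E$. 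Hence every such collection is intransitive. The number of collections obtained this way is $\bigl(|\mc D_{\btt,\ell}(n/k)|\bigr)$ times a factor for distributing which faces go to which die within a block, but more usefully: this construction already shows $L(\ell)\geq k\cdot L_k$ where $L_k$ is a per-block rate, and one must arrange the counting so that as $k\to\infty$ this recovers $\ell\log\ell$.

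To get the sharp constant I would refine the block idea to allow the \emph{within-block configuration to vary freely} among a large intransitive-preserving class, rather than being a single fixed pattern. The cleanest version: fix one ``reference'' intransitive collection $\mb E\in\mc D_{\btt,\ell}(m)$ with a definite victory surplus $\delta m^2$ in each of the $\ell$ cyclic comparisons (such $\delta>0$ exists and is uniform in $m$ once $m$ is large — e.g. from a single small die repeated blockwise). Now build $D^{(j)}$ of size $n$ as follows: reserve a sparse ``skeleton'' of $m$ faces per die realizing $\mb E$ on a designated sub-alphabet, and distribute the remaining $\ell(n-m)$ numbers \emph{arbitrarily} among the $\ell$ dice, $n-m$ to each, interleaved so that every extra face is either a global minimum below all skeleton/extra faces or placed in ``balanced'' positions. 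The point is a perturbation estimate: each reassignment of a single extra face changes each victory count by at most $\ell n$, so if the skeleton surplus $\delta m^2$ dominates the total perturbation — which it does once $m\gg \sqrt{n}\cdot(\text{number of ``unbalanced'' extra faces})$ — intransitivity is preserved. Choosing $m=n^{3/4}$, say, the skeleton is negligible in the counting, the free placement of the remaining faces contributes $(1+o(1))\frac{(\ell n)!}{(n!)^\ell}$ collections, and we conclude $|\mc D_{\btt,\ell}(n)|\geq \ee^{-o(n)}|\mc D_\ell(n)|$, i.e. $L(\ell)=\ell\log\ell$.

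The main obstacle is the perturbation/cancellation bookkeeping: one must verify carefully that moving an arbitrary extra face perturbs the cyclic victory inequalities \eqref{eq:fundcomparison} by a controlled amount, and in particular that the ``balanced placement'' of the $\ell(n-m)$ free faces can be done so that (i) the number of placements is $(1-o(1))\binom{\ell n}{n,\ldots,n}$ and (ii) the cumulative effect on each of the $\ell$ comparisons stays below the skeleton surplus $\delta m^2$. A convenient way to handle (ii) is a second-moment or symmetry argument: among all ways of distributing the free faces, the victory counts of one die against the next are tightly concentrated around $\tfrac12(n-m)^2$ (their mean over a uniform choice), with fluctuations $O(n^{3/2})$, so discarding the atypical $\ee^{-cn^{1/3}}$ fraction still leaves $(1-o(1))$ of all collections and makes the surplus $\delta m^2 = \delta n^{3/2}$ beat the $O(n^{3/2})$ fluctuation once $\delta$ is large enough — which can be arranged by taking several disjoint skeleton copies. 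Making these two competing $n^{3/2}$ scales line up with the right constants is the only delicate point; everything else is the soft combinatorics already developed via the dice–words correspondence.
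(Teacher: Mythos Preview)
This is essentially the paper's proof. Both arguments concatenate a short ``highly intransitive'' skeleton with a long ``free'' part, and use concentration of the free-part victory counts at scale $n^{3/2}$ (the paper via the CLT of Theorem~\ref{teo:CLT}) to show that a positive fraction of free configurations yield intransitive concatenations; a positive fraction of $|\mc W_\ell(n)|$ already gives $L(\ell)\geq\ell\log\ell$. Your skeleton (an Efron-type blow-up with surplus $\delta m^2$) is in fact simpler than the paper's explicit word $\mb S_k$, whose surplus is only $\Theta(n_k^{3/2+1/12})$.

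Two of your quantitative claims are off, though neither breaks the argument. With $m=n^{3/4}$ the skeleton surplus $\delta m^2$ and the free-part standard deviation are both $\Theta(n^{3/2})$, so a second-moment bound gives only a \emph{constant} good fraction, not $1-\ee^{-cn^{1/3}}$; this is still enough, or take $m=n^{3/4+\varepsilon}$ and recover $1-o(1)$ via Chebyshev. And concatenating skeleton copies does not raise the surplus fraction $\delta$: by \eqref{eq:Nijconcat} the surplus of $\mb S\mb S$ is $2\delta m^2$ against a total $(2m)^2$, i.e.\ fraction $\delta/2$. But since any fixed $\delta>0$ already suffices, this repair is unnecessary.
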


In fact, in a first draft version of this paper, Theorem~\ref{thm:ellvalue} was stated as a conjecture, based on strong numerical evidence performed for the case $\ell=3$. For completeness, in Subsection~\ref{sec:numerics} we discuss such numerical experiments, including an exact calculation of $|\mc D_{\btt,3}(n)|$ for low values of $n$. It remains a rather puzzling question to determine the true rate of decay in \eqref{eq:subexpondecay}.

The proof of Theorem~\ref{thm2} relies on two main ingredients. The first ingredient is a natural and explicit bijection between $\mc D_\ell(n)$ and the set $\mc W_\ell(n)$ consisting of words of length $n\ell$ in an alphabet of $\ell$ letters, where each letter appears $n$ times. Even though this bijection is rather simple, it turns out that properties on intransitive words become more transparent when we move to $\mc W_\ell(n)$. In particular, exploring this connection we establish a certain convexity property on $|\mc D_{\btt,\ell}(n)|$, from which Theorem~\ref{thm2} will then follow from standard arguments. 

However, the convexity argument we just mentioned does not provide any direct information on the value $L(\ell)$. Nevertheless, once again invoking the bijection with $\mc W_\ell(n)$, we establish a procedure to obtain a subset of $\mc D_{\btt, \ell}(n)$ from a large family $\mc Q$ of, in a very precise sense to be specified later, ``almost intransitive'' dice. Each die in $\mc Q$ can be concatenated to a fixed die that is, again in a precise sense, ``highly intransitive'' to generate a new intransitive die. This way, we are able to infer that $|\mc D_{\btt,\ell}(n)|\geq |\mc Q|$.

To complete the proof of Theorem~\ref{thm:ellvalue}, it will remain to find a set $\mc Q$ of low intransitivity for which $\log |\mc Q| \sim {n\ell\log\ell}$. The proof of the existence of such a set is probabilistic and based on our second main ingredient, namely a Central Limit Theorem for random dice, which we discuss next.

\subsection{Main results for random models of dice}\label{subsec:random_dice}\hfill

When each $D_i$ is a random variable, we say that the corresponding die $D=(D_1,\hdots,D_n)$ is a \textbf{random die}. Whenever we say that the \textbf{law} of a die $D$ is $\mc L^D$, we mean that the entries $D_i$ are all i.i.d.\ random variables with law $\mc L^D$.
We say that the dice in a collection $\mb D= (D^{(1)},\hdots, D^{(\ell)})$ are \textbf{independent} if the family of random variables $\{D_i^{(k)}\}$ is mutually independent. We stress that for independent dice the laws $\mc L^{(1)}\coloneqq\mc L^{D^{(1)}},\dots, \mc L^{(\ell)}\coloneqq\mc L^{D^{(\ell)}}$ need not coincide, but entries within the same die are i.i.d. random variables.

Our main goal is to determine whether a sequence of random dice may be intransitive when they grow in size. Fix an integer $\ell\geq 3$ and consider a sequence $\{\mb D_m\}_m$ of collections $\mb D_m=(D^{(1)},\hdots, D^{(\ell)})$ of random independent dice. Each die $D^{(k)}=D^{(k)}(m)$ depends on the index $m$ of the sequence $\{\mb D_m\}_m$, but to lighten notation we mostly omit this dependence. We assume each die $D^{(k)}=D^{(k)}(m)$ has $n_k=n_k(m)\leq m$ faces, which may vary with $m$, and we set
\begin{equation}\label{deff:fkm}
f_k=f_k(m)\;\coloneqq\;\frac{n_k}{m}\quad \text{so that } D^{(k)} \text{ has size } n_k\;=\;f_km,\quad k=1,\hdots, \ell.
\end{equation}
The assumption $n_k=n_k(m)\leq m$ is made solely for convenience as in this case $f_k\leq 1$ for every $k$. Although there are no further relations imposed between the sizes $n_k$ and $m$, it is instructive to think about $m$ as essentially giving the size of the die with largest number of faces.  As we already mentioned before, we always assume that different entries of the same die $D^{(k)}$ are independent random variables with the same law $\mc L^{(k)}=\mc L^{D^{(k)}}$ which may now vary with $m$, and we write $\mc L^{(k)}_m$ when we want to emphasize this dependence.

The main question we investigate in this second part is on how to estimate the probability of intransitivity, namely
\begin{equation}
\label{eq:def_intransitivity_event}
\bb P\big({D^{(1)} \btt D^{(2)} \btt \cdots \btt D^{(\ell)} \btt D^{(1)}}\big),
\end{equation}
as the number of faces of our dice go to infinity, which we measure by sending $m\to\infty$.
The intransitivity event in~\eqref{eq:def_intransitivity_event} is the
intersection of $D^{(k)} \btt D^{(k+1)}$ for $1 \le k \le \ell$, where we convention for the rest of the paper that $D^{(0)} = D^{(\ell)}$ and $D^{(\ell+1)} = D^{(1)}$. Such events are intimately connected to the values of the random variables
\begin{equation}
\label{eq:def_Nk}
N_{k} \;\coloneqq\; \sum_{i=1}^{n_k}\sum_{j=1}^{n_{k+1}}
    \ind_{{{D}_i^{(k)}>D_{j}^{(k+1)}}},\quad k=1,\dots, \ell
\end{equation}
and
\begin{equation}
\label{eq:def_Ek}
E_k\;\coloneqq\; \sum_{i=1}^{n_k}\sum_{j=1}^{n_{k+1}}
    \ind_{{{D}_i^{(k)}=D_{j}^{(k+1)}}},\quad k=1,\dots, \ell.
\end{equation}
From the inequality \eqref{eq:fundcomparison} we learn that $D^{(k)}\btt D^{(k+1)}$ if, and only if, the inequality
\begin{equation*}
N_k\;>\;\frac{1}{2}n_kn_{k+1}-\frac{1}{2}E_k
\end{equation*}
is satisfied, and therefore
\begin{equation}\label{eq:ineqNkEk}
\bb P\left({D^{(1)} \btt \cdots \btt D^{(\ell)} \btt D^{(1)}}\right)\;=\;
\bb P\Big(N_k>\frac{1}{2}n_kn_{k+1}-\frac{1}{2}E_k\,,\; k=1,\hdots,\ell\Big),
\end{equation}
which will be at the core of our method to analyze \eqref{eq:def_intransitivity_event}, and shows the relevance of $N_k$ and $E_k$. One should view the $N_k$ as the {\it relative strength} of the die $D^{(k)}$ against $D^{(k+1)}$.
Observe that for dice coming from a sequence $\{\mb D_m\}_m$, the random variables $N_k=N_k(m)$ and $E_k=E_k(m)$ also depend on $m$, and $N_{k}$, $N_{k+1}$, $E_k$ and $E_{k+1}$ are all pairwise strongly correlated.

We will analyze \eqref{eq:ineqNkEk} in the limit $m\to \infty$ via a Central Limit Theorem (CLT) for the vector $(N_1, \ldots,
N_\ell)$. For this CLT some probabilities associated to the underlying laws of the dice are of utmost importance. By
\begin{equation}\label{eq:defpk}
{\bf p}_k\;=\;{\bf p}(\mc L^{(k)},\mc L^{(k+1)}) \;\coloneqq\; \bb P\left(D^{(k)}_1> D^{(k+1)}_1\right)\;=\;\bb E\left(\ind_{D_1^{(k)}>D_1^{(k+1)}}\right)
\end{equation}
we denote the probability that a given face of the $k$-th die beats a given
face of the  $(k+1)$-th die. By
\begin{equation}\label{eq:defqk}
{\bf q}_k\;=\;{\bf q}(\mc L^{(k)},\mc L^{(k+1)}) \;\coloneqq\; \bb P\left(D^{(k)}_1> D^{(k+1)}_1, D^{(k)}_2> D^{(k+1)}_1\right)
\end{equation}
we denote the probability that two given faces of the $k$-th die beat a given face of the  $(k+1)$-th die. By
\begin{equation}\label{eq:defrk}
{\bf r}_k\;=\;{\bf r}(\mc L^{(k)},\mc L^{(k+1)}) \;\coloneqq\; \bb P\left(D^{(k)}_1> D^{(k+1)}_1, D^{(k)}_1> D^{(k+1)}_2\right)
\end{equation}
we denote the probability that a given face of the $k$-th die beats two given faces of the  $(k+1)$-th die. Finally, also set
\begin{equation}\label{eq:defsk}
{\bf s}_k \;=\;{\bf s}(\mc L^{(k-1)},\mc L^{(k)},\mc L^{(k+1)})\;\coloneqq\; \bb P\left(D^{(k-1)}_1 > D^{(k)}_1 > D^{(k+1)}_1\right),
\end{equation}
which is the probability that a given face of $D^{(k-1)}$ beats a given face of $D^{(k)}$ at the same time that the latter beats a given face of $D^{(k+1)}$.  As we will see in a moment, these quantities will play a role in understanding the covariance between different dice. We use cyclic notation for these quantities, so that $\mb p_{\ell+1}\coloneqq \mb p_1$, $\mb q_{\ell+1}\coloneqq\mb q_1$ and so forth.

As said, our main tool to analyze the probability \eqref{eq:def_intransitivity_event}, and also to prove Theorem~\ref{thm:ellvalue}, is a CLT for the correlated random variables $N_1,\hdots,N_\ell$, so it is natural to introduce their normalized version
\begin{equation}
\label{eq:def_tilde_Nk}
\wt N_k \;\coloneqq\; \frac{N_k - \bb E(N_k)}{\sqrt{\var{N_k}}}\,.
\end{equation}
Let
\begin{align}
\sigma_k
    \;=\;\sigma_k(\mb p_k,\mb q_k,\mb r_k,\mb s_k)
\;\coloneqq\; \Big[f_k f_{k+1}
    \left(f_k(\mb q_k - \mb p_k^2) + f_{k+1}(\mb r_k - \mb p_k^2)\right)
    \Big]^{1/2}\label{eq:def_sigma_k}
\end{align}
and
\begin{equation}
\label{eq:def_gamma_k}
\gamma_k\;\coloneqq\; \frac{1}{\sigma_{k-1}\sigma_k}f_{k-1}f_kf_{k+1}(\mb s_k-\mb p_{k-1}\mb p_k)\,.
\end{equation}
A straightforward calculation (see Lemma~\ref{lema:mean_var_Nk}) shows that, as $m\to \infty$,
\begin{equation}\label{eq:ENkvarNk}
\begin{split}
\bb E (N_k)& \;=\; f_kf_{k+1}m^2\mb p_k\,,  \\
\var{N_k} & \;=\; \sigma_k^2 m^3+o(m^{3})\,,\quad \text{and} \\
\corr{N_{k-1},N_{k}}& \;=\; \gamma_k +o(1)\,.
\end{split}
\end{equation}
We stress that the values $\sigma_k=\sigma_k(m)$ and $\gamma_k=\gamma_k(m)$ depend explicitly on probabilities associated to the laws $\mc L^{(k-1)}_m,\mc L^{(k)}_m$ and $\mc L^{(k+1)}_m$. Moreover, they are $O(1)$ as $m\to \infty$ regardless of the regularity features of these laws, such as whether they have finite moments or how their tails behave.

Since we are considering a sequence $\{\mb D_m\}_m$ of collections of independent dice, all the quantities we just introduced depend on $m$, and when needed to stress such dependence, we write $\mb p_k=\mb p_k(m), \sigma_k=\sigma_k(m),\gamma_k=\gamma_k(m)$ etc.
Our main working assumptions are the following.
\begin{assumption}\label{assumption:main}
Fix $\ell\geq 3$. We assume that the sequence $\{\mb D_m\}_m$ is a collection of $\ell$ independent random dice, each with number of faces $n_k=f_km$ as in \eqref{deff:fkm}, and satisfying the following  conditions:
\begin{enumerate}[(i)]
\item For $k=1,\hdots,\ell$, the relative sizes $f_k=f_k(m)$ satisfy
$$
f_k(m)\;\to\; f_k(\infty)\in (0,1]\,, \quad \text{as }m\to\infty.
$$

\item For $k=1,\hdots, \ell$, the rate of growth of the mean and variance of $N_k$, and covariance between $N_{k-1}$ and $N_k$ satisfy
\begin{align*}
& \mb p_k(m)\;\to\; \mb p_k(\infty)\in (0,1], \\
& \sigma_k(m)\;\to\; \sigma_k(\infty)\in (0,\infty)\,, \\
& \gamma_k(m)\;\to\; \gamma_k(\infty)\in [-1,1]\,,
\end{align*}
as $m\to\infty$.
\end{enumerate}
\end{assumption}

When, for a given $m$, all the laws $\mc L^{(m)}_1,\hdots \mc L^{(m)}_\ell$, are given by a law $\mc L$ without mass points, the values
$$
\mb p_k=\frac{1}{2},\quad \mb q_k=\mb r_k=\frac{1}{3} \quad \text{and}\quad \mb s_k=\frac{1}{6}
$$
are computed somewhat easily, see Lemma~\ref{lem:prqscontdistr} below.

We insist that the values $\mb p_k(m)$ and $\sigma_k=\sigma_k(m)$ depend only on probabilities associated with the underlying laws rather than on qualitative features of them. In particular, Assumption~\ref{assumption:main}--(i) is solely a non-degeneracy condition, which ensures that the number of faces of the dice are all growing, with the same speed $m$ but possibly different rates. With \eqref{eq:defpk} in mind, condition (ii) on $\mb p_k$ essentially says that the limiting laws do not reduce to a deterministic situation where intransitivity does not occur by degeneration. Also, as we said earlier, under Assumption~\ref{assumption:main}--(i) the values $\sigma_k=\sigma_k(m)$ are bounded functions of $m$. Thus, with \eqref{eq:ENkvarNk} in mind, the second condition in (ii) says that the variance in the relative strength of consecutive dice is growing at true speed $m^3$ and not slower. The quantities $\gamma_k(m)$ are correlation coefficients, so they are always bounded, and the third convergence condition in (ii) can always be achieved with a replacement of the original sequence of dice $\{\mb D_m\}_m$ by a subsequence of it.

\begin{theorem}\label{teo:CLT}
Fix $\ell\geq 3$ and for each $m$ let $\mb D_m=(D^{(1)}(m),\hdots, D^{(\ell)}(m))$ be a collection of random independent dice, for which $\{\mb D_m\}_m$ satisfies Assumption~\ref{assumption:main}, and let $(\widetilde N_1(m),\ldots, \widetilde N_\ell(m))$ be the corresponding variables from \eqref{eq:def_tilde_Nk}.

Then, as $m\to\infty$, the random vector $(\widetilde N_1(m),\cdots, \widetilde N_\ell(m))$ converges in distribution to  a centered Gaussian vector $(X_1, \ldots, X_{\ell})$
whose covariance matrix is given by
\begin{equation}\label{eq:covariancematrixCLT}
\Sigma \;=\;\left(
\begin{array}{cccccccc}
1 & \gamma_2(\infty) & 0 & \cdots  & 0 &\gamma_1(\infty)\\
\gamma_2(\infty) & 1 & \gamma_3(\infty) & \cdots & 0 & 0\\
0 & \gamma_3(\infty) & 1 & \cdots& 0 & 0\\
\vdots & \vdots & \vdots & \ddots & \vdots & \vdots\\
0 & 0 & 0 & \cdots & 1 & \gamma_{\ell}(\infty)\\
\gamma_1(\infty) & 0 & 0 & \cdots & \gamma_{\ell}(\infty) & 1
\end{array}\right),
\end{equation}
where the coefficients $\gamma_k(\infty)$ are the ones in Assumption~\ref{assumption:main}--(ii).
\end{theorem}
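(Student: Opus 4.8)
The natural approach is the method of moments: since a centered Gaussian vector is determined by its covariance matrix, it suffices to show that for every multi-index $\bm\alpha=(\alpha_1,\dots,\alpha_\ell)\in\mathbb Z_{\geq 0}^\ell$ the mixed moments satisfy
\begin{equation*}
\bb E\Big[\prod_{k=1}^\ell \wt N_k(m)^{\alpha_k}\Big]\;\xrightarrow[m\to\infty]{}\;\bb E\Big[\prod_{k=1}^\ell X_k^{\alpha_k}\Big],
\end{equation*}
the right-hand side being the corresponding Gaussian moment computed from $\Sigma$ via Wick's formula. Since each $\wt N_k$ has bounded variance, a standard Carleman-type argument (or simply that multivariate Gaussians are moment-determinate) then upgrades convergence of all joint moments to convergence in distribution. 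So the whole problem reduces to a careful asymptotic evaluation of the joint moments of the $N_k$'s.

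The main work is combinatorial. Write $N_k=\sum_{i,j}\ind_{D_i^{(k)}>D_j^{(k+1)}}$; expanding the product $\prod_k (N_k-\bb E N_k)^{\alpha_k}$ yields a sum over tuples of indicator variables, and each such term is indexed by a choice, for each factor, of a face-index on die $D^{(k)}$ and one on $D^{(k+1)}$. The plan is to encode each term by a graph (or multigraph) whose vertices are the chosen faces — grouped by which die they belong to — and whose edges record the comparisons appearing in the term; coinciding face-indices are identified as a single vertex. Because entries within a die are i.i.d.\ and distinct dice are independent, the expectation of a term depends only on the isomorphism type of this decorated graph, and independence across dice makes the expectation factor along the structure of comparisons. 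The key quantitative step is a vertex-counting estimate: a graph with $v$ vertices contributes an expectation that is $\Theta(1)$ but whose \emph{multiplicity} (the number of index-tuples realizing it) is $\Theta(m^{v})$, while the normalization contributes $m^{-\frac32\sum\alpha_k}$ (since $\var N_k\sim\sigma_k^2 m^3$). One shows that connected components which are not "pairings" of the $N$-factors are strictly suppressed: the centering kills any term where a single $N_k$-factor is isolated, and a connected block spanning $t\geq 2$ of the $N$-factors has at most $\tfrac32 t$ (asymptotically relevant) vertices, with equality exactly when $t=2$ and the two factors come from adjacent indices $k-1,k$ sharing one face on the common die $D^{(k)}$. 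Hence in the limit only perfect matchings of the $N$-factors into adjacent pairs survive, each adjacent pair $\{N_{k-1},N_k\}$ contributing precisely the limiting covariance $\gamma_k(\infty)$ from \eqref{eq:def_gamma_k}–\eqref{eq:ENkvarNk}, and non-adjacent or self-pairs contributing $0$ (or $1$ for a diagonal pair $\{N_k,N_k\}$) — which is exactly Wick's formula for $\Sigma$.

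Two routine but necessary ingredients feed into this: first, Lemma~\ref{lema:mean_var_Nk} (already cited) giving $\bb E N_k$, $\var N_k\sim\sigma_k^2 m^3$ and $\corr(N_{k-1},N_k)\to\gamma_k(\infty)$, so that the leading-order contributions are correctly identified and normalized; second, Assumption~\ref{assumption:main}, which guarantees $f_k\to f_k(\infty)\in(0,1]$, $\sigma_k(\infty)\in(0,\infty)$ and $\mathbf p_k(\infty)\in(0,1]$ — the non-degeneracy that both keeps $m^3\var N_k$ from vanishing and keeps the suppressed graphs genuinely suppressed (no hidden cancellation of the main term). One also needs uniform control of the subleading terms: the number of graph isomorphism types with a bounded number of vertices is bounded uniformly in $m$, so the $o(1)$ errors aggregate harmlessly.

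The main obstacle is the vertex-counting lemma, i.e.\ proving the sharp bound that a connected block joining $t$ of the $N$-factors spans at most $\tfrac32 t$ relevant vertices with the stated equality cases, and doing so while correctly handling (a) the identifications forced when face-indices coincide — which is what produces the $\gamma_k$ terms rather than products $\mathbf p_{k-1}\mathbf p_k$ — and (b) the contribution of ties, i.e.\ the indicators $\ind_{D_i^{(k)}=D_j^{(k+1)}}$ that could in principle arise; under Assumption~\ref{assumption:main} their effect is lower order, but this must be checked. Everything else — the reduction to moments, Wick's formula on the Gaussian side, and the passage from moment convergence to distributional convergence — is standard once this combinatorial estimate is in place.
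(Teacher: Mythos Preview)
Your proposal is correct and follows essentially the same route as the paper: both arguments use the moment method, encode each term in the expansion of the centered $N_k$'s as a graph on face-vertices with comparison-edges, and show that the only surviving contributions are the ``cherry'' components (two edges sharing one vertex), whose three types produce exactly the diagonal $1$'s and off-diagonal $\gamma_k(\infty)$'s of $\Sigma$. The paper first reduces via Cram\'er--Wold to scalar moments of $\sum_k\alpha_k\wt N_k$ and then counts by number of connected components, whereas you work with mixed moments and a vertex-count-per-component bound leading to Wick's formula; these are cosmetically different but equivalent packagings of the same combinatorial estimate.
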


Theorem~\ref{teo:CLT} is an analogue for unconstrained dice of \cite[Theorem~1.4]{PTRF2} which considers two uniform models of dice with constrained sum of faces.

As explained in \eqref{eq:subexpondecay} \textit{et seq.}, we will use Theorem~\ref{teo:CLT} to construct a particular set $\mc Q$ consisting of collections of dice which are almost intransitive but not necessarily intransitive, and exploring such set we will be able to prove Theorem~\ref{thm:ellvalue}. 

As a second application, we now discuss how Theorem~\ref{teo:CLT} may be
applied to show that a collection of random dice is not intransitive,
asymptotically almost surely, as $m\to\infty$.

In general, the very definition of $\mb p$ in \eqref{eq:defpk} would say that
\begin{align}\label{eq:identitypksymm}
    1 & \;=\;\bb P\big(D_1^{(k)}>D_1^{(k+1)}\big)+\bb
        P\big(D_1^{(k)}<D_1^{(k+1)}\big)+\bb
        P\big(D_1^{(k)}=D_1^{(k+1)}\big)\nonumber\\
    & \;=\;  \mb p(\mc L^{(k)},\mc L^{(k+1)})+ \mb p(\mc L^{(k+1)},\mc L^{(k)})+\bb P\big(D_1^{(k)}=D_1^{(k+1)}\big)\,.
\end{align}
In order for the die $D^{(k)}$ to be sufficiently stronger than the die $D^{(k+1)}$, we would expect that $\mb p(\mc L^{(k)},\mc L^{(k+1)})>\mb p(\mc L^{(k+1)},\mc L^{(k)})$, and in such a case we would expect $D^{(k)} \btt D^{(k+1)}$ with high probability. Likewise, if $\mb p(\mc L^{(k+1)},\mc L^{(k)})<\mb p(\mc L^{(k)},\mc L^{(k+1)})$ then we would instead expect $D^{(k+1)} \btt D^{(k)}$ with high probability. Hence, intransitivity becomes a nontrivial question precisely when $\mb p(\mc L^{(k)},\mc L^{(k+1)})\approx \mb p(\mc L^{(k+1)},\mc L^{(k)})$ asymptotically as $m\to\infty$, in which case the equality above becomes
$$
\mb p(\mc L^{(k+1)},\mc L^{(k)})+\frac{1}{2}\bb P\big(D_1^{(k)}=D_1^{(k+1)}\big)=\mb p_k+\frac{1}{2}\bb P\big(D_1^{(k)}=D_1^{(k+1)}\big)\;\approx\; \frac{1}{2}\,,
$$
and our next result gives a rate of decay of such approximation under which we can use the CLT to estimate the probability of intransitivity in the large-dice limit $m\to\infty$.

\begin{theorem}\label{thm:intransitive}
    Fix $\ell\geq 3$ and for each $m$ let $\mb D_m=(D^{(1)}(m),\hdots, D^{(\ell)}(m))$ be a collection of random independent dice, for which $\{\mb D_m\}_m$ satisfies Assumption~\ref{assumption:main}, and let $(X_1,\hdots,X_\ell)$ be a Gaussian vector with covariance matrix \eqref{eq:covariancematrixCLT}. Suppose that 
    \begin{equation}\label{eq:asymptoticnotie}
        \lim_{m\to\infty} \bb P\left(D^{(k)}_1(m)=D_1^{(k+1)}(m)\right)\;=\; 0\quad \text{for } k \in [\ell]
    \end{equation}
    holds for every sufficiently large $m$, and that the estimate
    \begin{equation}\label{eq:condttslowgrowthpk_lim}
        \frac{1}{2}-\mb p_k-\frac{1}{2}\bb P(D_1^{(k)}=D_1^{(k+1)})
         =o(m^{-1/2}),\quad m\to +\infty,
    \end{equation}
    holds for  $k \in [\ell]$. Then, it holds
    \begin{align}
        \bb P\left(X_j> 0,\; j \in [\ell] \right)
        &\le \liminf_{m\to \infty}\,\bb P\left({D^{(1)} \btt \cdots \btt D^{(\ell)} \btt D^{(1)}}\right) \nonumber\\
        &\le \limsup_{m\to \infty}\,\bb P\left({D^{(1)} \btt \cdots \btt D^{(\ell)} \btt D^{(1)}}\right)
        \le \bb P\left(X_j\geq 0,\; j \in [\ell] \right).
        \label{eq:intransitivelimit_ineq}
    \end{align}
    In particular, when the Gaussian vector $(X_1, \hdots, X_\ell)$ is non-degenerate we have
    \begin{equation}\label{eq:intransitivelimit_lim}
        \lim_{m\to \infty}\,\bb P\left({D^{(1)} \btt \cdots \btt D^{(\ell)} \btt D^{(1)}}\right)
        = \bb P\left(X_j\geq 0,\; j \in [\ell] \right).
    \end{equation}
\end{theorem}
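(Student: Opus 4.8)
The plan is to deduce Theorem~\ref{thm:intransitive} from the representation \eqref{eq:ineqNkEk} together with the CLT in Theorem~\ref{teo:CLT}, by rewriting the intransitivity event in terms of the normalized variables $\wt N_k$ and controlling the drift coming from the gap $\tfrac12-\mb p_k$ and from the tie counts $E_k$. First I would start from
$$
\bb P\left({D^{(1)} \btt \cdots \btt D^{(\ell)} \btt D^{(1)}}\right)\;=\;\bb P\Big(N_k>\tfrac12 n_kn_{k+1}-\tfrac12 E_k\,,\; k\in[\ell]\Big)
$$
and, subtracting $\bb E(N_k)=f_kf_{k+1}m^2\mb p_k$ and dividing by $\sqrt{\var{N_k}}=\sigma_k m^{3/2}(1+o(1))$, rewrite the $k$-th inequality as
$$
\wt N_k \;>\; \frac{\tfrac12 n_kn_{k+1}-f_kf_{k+1}m^2\mb p_k}{\sigma_k m^{3/2}} \;-\; \frac{E_k}{2\sigma_k m^{3/2}}\;=:\; a_k(m)-\frac{E_k}{2\sigma_k m^{3/2}}.
$$
Since $n_kn_{k+1}=f_kf_{k+1}m^2$, the numerator in $a_k(m)$ equals $f_kf_{k+1}m^2(\tfrac12-\mb p_k)$, so $a_k(m)=\dfrac{f_kf_{k+1}m^{1/2}}{\sigma_k}\big(\tfrac12-\mb p_k\big)$, and by \eqref{eq:condttslowgrowthpk_lim} combined with \eqref{eq:asymptoticnotie} this is $o(1)$ as $m\to\infty$ (the tie probability is $o(m^{-1/2})$ for large $m$ by the same hypothesis, so it does not affect the order of $a_k$).

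The second step is to show the tie corrections $E_k/m^{3/2}$ are negligible in probability. By \eqref{eq:def_Ek} and \eqref{eq:asymptoticnotie}, $\bb E(E_k)=n_kn_{k+1}\,\bb P(D_1^{(k)}=D_1^{(k+1)})=f_kf_{k+1}m^2\cdot o(1)=o(m^2)$; actually under \eqref{eq:condttslowgrowthpk_lim} the tie probability is $o(m^{-1/2})$, so $\bb E(E_k)=o(m^{3/2})$, and Markov's inequality gives $E_k/m^{3/2}\to 0$ in probability. Hence the random thresholds $a_k(m)-E_k/(2\sigma_k m^{3/2})$ all converge to $0$ in probability. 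Combining this with Theorem~\ref{teo:CLT}, which gives $(\wt N_1,\dots,\wt N_\ell)\Rightarrow(X_1,\dots,X_\ell)$, and invoking Slutsky together with the portmanteau theorem, I would conclude: for the lower bound, for any $\varepsilon>0$ the event $\{\wt N_k>\varepsilon,\ k\in[\ell]\}$ eventually implies intransitivity up to an event of vanishing probability, and $\liminf_m \bb P(\text{intransitive})\ge \bb P(X_k>\varepsilon,\ k\in[\ell])\to\bb P(X_k>0,\ k\in[\ell])$ as $\varepsilon\downarrow0$ using continuity from below; for the upper bound, intransitivity implies $\{\wt N_k\ge -\varepsilon,\ k\in[\ell]\}$ up to a vanishing event, and letting $\varepsilon\downarrow0$ and using that the closed orthant is a closed set gives $\limsup_m\bb P(\text{intransitive})\le \bb P(X_k\ge 0,\ k\in[\ell])$. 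This yields \eqref{eq:intransitivelimit_ineq}.

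The final step is the non-degenerate case: if $\Sigma$ from \eqref{eq:covariancematrixCLT} is invertible then $(X_1,\dots,X_\ell)$ has a density, so $\bb P(X_k=0)=0$ for each $k$ and hence $\bb P(X_k>0,\ k\in[\ell])=\bb P(X_k\ge 0,\ k\in[\ell])$; the squeeze in \eqref{eq:intransitivelimit_ineq} then forces the limit \eqref{eq:intransitivelimit_lim}. The main obstacle I anticipate is the careful handling of the boundary: the event $\{\wt N_k > a_k(m)\}$ has a \emph{strict} inequality and a \emph{random, $m$-dependent} threshold that converges to $0$ only in probability, so one must be careful to separate the deterministic drift $a_k(m)$ (handled by the $o(m^{-1/2})$ hypothesis) from the stochastic tie term (handled by Markov), and then apply portmanteau to the limiting orthant — whose boundary has positive $(\ell{-}1)$-dimensional measure but zero $\ell$-dimensional Lebesgue measure in the non-degenerate case — to get matching upper and lower bounds. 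The other mild subtlety is that Theorem~\ref{teo:CLT} gives joint convergence of the vector, which is exactly what is needed since intransitivity is an intersection of $\ell$ events; a coordinatewise CLT would not suffice.
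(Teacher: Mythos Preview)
Your overall architecture is exactly the paper's: rewrite the intransitivity event via \eqref{eq:ineqNkEk} in terms of $\wt N_k$, show the threshold converges to $0$ (deterministically plus in probability), then sandwich with $\pm\varepsilon$ and apply the CLT from Theorem~\ref{teo:CLT} together with portmanteau. The final paragraph on the non-degenerate case is also fine.

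There is, however, a genuine gap in how you split the threshold. You set
\[
a_k(m)=\frac{f_kf_{k+1}m^{1/2}}{\sigma_k}\Bigl(\tfrac12-\mb p_k\Bigr)
\]
and then assert that ``the tie probability is $o(m^{-1/2})$ for large $m$ by the same hypothesis''. This is \emph{not} a consequence of \eqref{eq:asymptoticnotie}--\eqref{eq:condttslowgrowthpk_lim}: those hypotheses only control the \emph{combination} $\tfrac12-\mb p_k-\tfrac12\mb p_k^=$, and one can have, e.g., $\mb p_k=\tfrac12-\tfrac1{\log m}$ and $\mb p_k^= =\tfrac{2}{\log m}$, satisfying both hypotheses while $\mb p_k^=$ is \emph{not} $o(m^{-1/2})$. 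In that case your $a_k(m)$ does not go to $0$, and your Markov bound $E_k/m^{3/2}\to 0$ in probability also fails, since $\bb E(E_k)=f_kf_{k+1}m^2\mb p_k^=$ need not be $o(m^{3/2})$.

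The fix is to center $E_k$ before estimating it. Writing $E_k=\bb E(E_k)+(E_k-\bb E(E_k))$ and using $\bb E(E_k)=n_kn_{k+1}\mb p_k^=$, the threshold becomes
\[
\frac{f_kf_{k+1}m^{1/2}}{\sigma_k}\Bigl(\tfrac12-\mb p_k-\tfrac12\mb p_k^=\Bigr)\;-\;\frac{E_k-\bb E(E_k)}{2\sigma_k m^{3/2}}.
\]
Now the deterministic part is $o(1)$ \emph{directly} by \eqref{eq:condttslowgrowthpk_lim}, and the random part goes to $0$ in probability by \emph{Chebyshev} (not Markov), using only $\var{E_k}=o(m^3)$, which is exactly what \eqref{eq:asymptoticnotie} gives via Lemma~\ref{lem:notiemeanvariance}. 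This is precisely what the paper does; after this correction your argument goes through as written.
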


\begin{remark}
\label{rem:asymptotic_no_tie}
When working with no-tie collections of dice, condition~\eqref{eq:asymptoticnotie}
is immediate. Another simple situation is when all the laws are the
same $\mc L^{(1)}=\cdots= \mc L^{(\ell)}$, since \eqref{eq:identitypksymm}
implies
\begin{equation*}
\frac{1}{2}-\mb p_k-\frac{1}{2}\bb P\left(D_1^{(k)}=D_1^{(k+1)}\right)=0,\quad k=1,\hdots, \ell,
\end{equation*}
so condition~\eqref{eq:condttslowgrowthpk_lim} holds.
In order to evaluate the Gaussian probability in~\eqref{eq:intransitivelimit_lim} we need to understand
better the covariance matrix $\Sigma$ in \eqref{eq:covariancematrixCLT}, see
for instance Proposition~\ref{prop:h2_eigenspace_zero}.
Applications are given in Section~\ref{sec:examples}.
\end{remark}

%

Note that \eqref{eq:asymptoticnotie} says that there are no ties between different dice in the asymptotic limit. Similarly as for $N_k$, the mean and variance of the $E_k$'s are given in terms of probabilities associated to the underlying laws. For arbitrary underlying laws of the entries of the dice, they satisfy the rough bound
\begin{equation}\label{eq:variancetiesgeneral}
\bb E (E_k)=O(m^2)\quad \text{and}\quad \var{E_k}=O(m^3)\quad \text{as } m\to\infty,
\end{equation}
see Lemma~\ref{lem:meanvarEk} below. These quantities have the same order as the corresponding quantities for $N_k$ (compare \eqref{eq:ENkvarNk} with \eqref{eq:variancetiesgeneral}). Lemma~\ref{lem:notiemeanvariance} below shows that \eqref{eq:asymptoticnotie} implies
\begin{equation}\label{eq:variancetiesslow}
\bb E (E_k)=o(m^2),\quad \var{E_k}=o(m^3)\quad \text{as } m\to\infty, \; \text{ for }k=1,\hdots, \ell.
\end{equation}
Thus, condition~\eqref{eq:asymptoticnotie} in Theorem~\ref{thm:intransitive}
may also be interpreted as saying that whenever the $E_k$'s grow slightly
slower than $N_k$'s, either in their mean or in their variance, then the
intransitive dice problem can be bounded from above by the Gaussian
probabilities \eqref{eq:intransitivelimit_limsup} and
\eqref{eq:intransitivelimit_liminf}. 

Even though the covariance matrix \eqref{eq:covariancematrixCLT} is structured,
computing the probability on Equation~\eqref{eq:intransitivelimit_lim} for general $\gamma_k(\infty)$'s explicitly is a challenge. Nevertheless, when $\mb p_k,\mb q_k,\mb r_k$ and $\mb s_k$ are asymptotically the same as for i.i.d. continuous dice, we are able to show that this probability is zero, obtaining the next result.

\begin{theorem}\label{thm:nointransitive}
Let $\{\mb D_m\}_m$ be a sequence of random independent dice satisfying the
conditions of Theorem~\ref{thm:intransitive}. In addition, suppose that
\begin{equation}\label{eq:assumptionpqrssym}
\mb p_k\to \frac{1}{2},\quad \mb q_k \to \frac{1}{3}, \quad \mb r_k\to \frac{1}{3} \quad \text{and}\quad \mb s_k\to \frac{1}{6} \quad \text{as }m \to \infty,
\end{equation}
for every $k=1,\hdots, \ell$. Then
$$
\lim_{m\to \infty}\bb P\left({D^{(1)} \btt \cdots \btt D^{(\ell)} \btt D^{(1)}}\right)\;=\;0\,.
$$
\end{theorem}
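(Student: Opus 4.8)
The plan is to feed the CLT-based estimate of Theorem~\ref{thm:intransitive} into an explicit diagonalization of the limiting Gaussian vector. Since $\{\mb D_m\}_m$ satisfies the hypotheses of Theorem~\ref{thm:intransitive}, the upper bound in~\eqref{eq:intransitivelimit_ineq} already gives
\[
\limsup_{m\to\infty}\bb P\big(D^{(1)}\btt\cdots\btt D^{(\ell)}\btt D^{(1)}\big)\;\le\;\bb P\big(X_j\ge 0,\ j\in[\ell]\big),
\]
where $(X_1,\dots,X_\ell)$ is centered Gaussian with covariance matrix $\Sigma$ as in~\eqref{eq:covariancematrixCLT}; since probabilities are nonnegative, it suffices to show this orthant probability vanishes. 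To that end I would first evaluate the limiting parameters under~\eqref{eq:assumptionpqrssym}: writing $f_k$ for $f_k(\infty)$ and substituting $\mb p_k^2\to\tfrac14$, $\mb q_k,\mb r_k\to\tfrac13$, $\mb s_k\to\tfrac16$ into~\eqref{eq:def_sigma_k}--\eqref{eq:def_gamma_k} gives $\sigma_k(\infty)^2=\tfrac1{12}f_kf_{k+1}(f_k+f_{k+1})$ and, after a short simplification,
\[
\gamma_k(\infty)\;=\;-\sqrt{\tfrac{f_{k-1}f_{k+1}}{(f_{k-1}+f_k)(f_k+f_{k+1})}}\;\in\;(-1,0),
\]
consistently with Assumption~\ref{assumption:main}.

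The heart of the argument is to realize $(X_1,\dots,X_\ell)$ concretely as a rescaled discrete gradient around the cycle. Let $Z_1,\dots,Z_\ell$ be i.i.d.\ standard normal variables, set $Y_k\coloneqq Z_k/\sqrt{f_k}$ (so the $Y_k$ are independent and atomless), and define, cyclically with $f_{\ell+1}=f_1$,
\[
X_k\;\coloneqq\;\sqrt{\tfrac{f_kf_{k+1}}{f_k+f_{k+1}}}\,\big(Y_k-Y_{k+1}\big)\,.
\]
A one-line computation gives $\var{X_k}=1$, $\cov{X_{k-1},X_k}=\gamma_k(\infty)$ and $\cov{X_j,X_k}=0$ for non-adjacent indices (as $X_j,X_k$ then involve disjoint independent $Y$'s), so $(X_1,\dots,X_\ell)\sim N(0,\Sigma)$ and we may use this representation. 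Because the prefactors are strictly positive, $X_k\ge 0$ is equivalent to $Y_k\ge Y_{k+1}$, whence
\[
\big\{X_j\ge 0,\ j\in[\ell]\big\}\;=\;\{Y_1\ge Y_2\ge\cdots\ge Y_\ell\ge Y_1\}\;=\;\{Y_1=\cdots=Y_\ell\}.
\]
This last event is contained in $\{Y_1=Y_2\}$, hence has probability $0$; combined with the display in the first paragraph this yields $\lim_{m\to\infty}\bb P(D^{(1)}\btt\cdots\btt D^{(\ell)}\btt D^{(1)})=0$.

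I do not expect a real obstacle beyond bookkeeping: the delicate points are the algebraic simplification of $\gamma_k(\infty)$ and checking that the proposed $X_k$ reproduce the circulant-type matrix~\eqref{eq:covariancematrixCLT}. If one prefers to bypass the explicit realization, an equivalent route is to verify that $c_k\coloneqq\sigma_k(\infty)/f_{k+1}>0$ spans $\ker\Sigma$, so $\sum_{k=1}^\ell c_kX_k=0$ almost surely; on $\{X_j\ge 0,\ j\in[\ell]\}$ each $c_kX_k$ is then a nonnegative variable with zero sum, forcing $X=0$, an event of probability $0$ since $\Sigma\neq 0$. Either way the conceptual content is the same: the leading fluctuation of $N_k$ is a difference of per-die ``potentials'', and an intransitive cycle would demand that these potentials strictly decrease all the way around the loop and return to the start, which is impossible.
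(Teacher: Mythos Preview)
Your main argument is correct and is genuinely cleaner than the paper's. The paper first computes $\gamma_k(\infty)$ as you do (its Proposition~\ref{prop:coeffgammak}), but then proceeds indirectly: it proves $\det\Sigma=0$ via an inclusion--exclusion identity recast through auxiliary independent events $A_k=\{U_k\le a_k\}$ and $B_k=A_k\cap A_{k+1}^c$ (Lemma~\ref{lem:det_Sigma_is_zero}), and then shows, by a recursion on auxiliary quantities $P_k$, that $\ker\Sigma$ is one-dimensional and spanned by a vector with strictly positive entries (Proposition~\ref{prop:h2_eigenspace_zero}); from this it deduces that $\operatorname{supp}(X)=(\ker\Sigma)^\perp$ misses the closed positive orthant (Theorem~\ref{thm:probabilitieszero}). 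Your explicit realization $X_k=\sqrt{f_kf_{k+1}/(f_k+f_{k+1})}\,(Y_k-Y_{k+1})$ collapses all of this into a single coupling: it simultaneously exhibits the covariance, the kernel vector (the reciprocals of the prefactors, since $\sum_k X_k/\sqrt{f_kf_{k+1}/(f_k+f_{k+1})}=\sum_k(Y_k-Y_{k+1})=0$), and the impossibility of the orthant event via the ``potential'' interpretation you mention. What your approach buys is transparency and brevity; what the paper's approach buys is that the spectral facts about $\Sigma$ are recorded explicitly, which might be reusable elsewhere.

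One correction to your closing aside: the claim that $c_k=\sigma_k(\infty)/f_{k+1}$ spans $\ker\Sigma$ is false when the $f_k$ are not all equal. For instance with $\ell=3$ and $(f_1,f_2,f_3)=(1,2,1)$ one checks directly that $(\Sigma c)_2\neq 0$. The correct positive kernel vector is the one your own construction hands you, namely $c_k=\sqrt{(f_k+f_{k+1})/(f_kf_{k+1})}$; these coincide with $\sigma_k(\infty)/f_{k+1}$ only up to a factor $\sqrt{12}/f_k$, which is constant in $k$ precisely when all $f_k$ agree. Since this is offered only as an alternative and your primary argument does not rely on it, the proof stands.
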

In words, under the assumptions of Theorem~\ref{thm:nointransitive} above, the proportion of intransitive random independent dice becomes negligible as the number of faces grows.
In Section~\ref{sec:examples} we also give an example of 3 dice with different laws for which such proportion converges to a value in $(0,1)$.

For i.i.d. dice with the same number of faces, Theorem~\ref{thm:nointransitive} was obtained first by H\k{a}z{\l}a, Mossel, Ross and Zheng \cite[Theorem~6]{PTRF}. We decided to present it here as it is another application of Theorem~\ref{teo:CLT}, and it applies to a larger setup.

Both Theorem~\ref{thm:ellvalue} and Theorem~\ref{thm:nointransitive} rely on Theorem~\ref{teo:CLT}, but on its own the proof of Theorem~\ref{teo:CLT} is the most involved proof in this paper. We approach it via the moment method. The quantities $\wt N_k$'s are correlated Bernoulli random variables, and to control their moments we rely on combinatorial arguments. We expand such moments in powers of $m$, and relate each such coefficient to the number of graphs with some particular properties. By a careful estimation of the number of such graphs, we are then able to match the large $m$ limit of such moments with the claimed Gaussian.

The proof of Theorem~\ref{thm:intransitive} is based on Theorem~\ref{teo:CLT}.
We look at the probability of $(D^{(1)}, \ldots, D^{(\ell)})$ forming an
intransitive cycle, and with the help of Chebyshev's inequality we condition on the event of no ties, reducing the right-hand side of \eqref{eq:ineqNkEk} to a probability that involves only the $N_k$'s plus an additional term which is small in virtue of the variance control \eqref{eq:variancetiesslow}. The right-hand side then naturally arises when taking the large $m$ limit.
In virtue of a particular structure of the coefficients $\gamma_k(\infty)$ in
the covariance matrix \eqref{eq:covariancematrixCLT}, we are able to show that
the probability on the right-hand side of \eqref{eq:intransitivelimit_lim} vanishes, and Theorem~\ref{thm:nointransitive} follows.

\subsection{Organization of the remainder of the paper}\hfill

 The remainder of the paper is structured as follows.
 In Section~\ref{sec:examples} we discuss examples of random dice,  in particular  when our core Assumption~\ref{assumption:main} and the variance control \eqref{eq:variancetiesslow} are satisfied, allowing to apply our main result. We also provide a sequence of random independent dice that do not satisfy these conditions and for which intransitivity survives in the limit.
  In Section~\ref{sec:deterministic} we discuss intransitivity in deterministic contexts, and in particular we explore a connection between intransitive dice and combinatorics of words in order to construct intransitive dice.
  We then turn to the context of random dice. In Section~\ref{sec:countingGaussian} we briefly discuss the counting functions $N_k$ and $E_k$ from \eqref{eq:def_Nk}--\eqref{eq:def_Ek}, which correspond to victories and ties, respectively, and which play a central role in the connection between our CLT and intransitivity. 
In our CLT, Gaussian vectors with a covariance matrix of a particular structure appear (see \eqref{eq:covariancematrixCLT}), and in Section~\ref{sec:countingGaussian} we also collect several properties of them in a form suitable for our needs.
  In Section~\ref{s:thms5_and_6}, we assume Theorem~\ref{teo:CLT}, which is our central limit theorem, and we use it to prove Theorems~\ref{thm:intransitive} and \ref{thm:nointransitive}, which are tests of asymptotic intransitivity. Finally, in Section~\ref{sec:proof_CLT}, we prove Theorem~\ref{teo:CLT}, and in Section~\ref{sec:stringsAsymptotics} we use several outputs obtained in previous sections to conclude the proof of Theorem~\ref{thm:ellvalue}.

\subsection*{Acknowledgments}
T.F.\ acknowledges support by the National Council for Scientific and Technological Development (CNPq) via a Universal Grant (number 406001/2021-9) and a Bolsa de Produtividade (number 311894/2021-6). 

G.S.\ acknowledges support by São Paulo Research Foundation (FAPESP) under Grants \# 2019/16062-1 and \# 2020/02506-2, and by Brazilian National Council for Scientific and Technological Development (CNPq) under Grant \# 315256/2020-6.

J.P.\ acknowledges support by São Paulo Research Foundation (FAPESP) under Grant \# 2023/02674-0.

J.Pa.\ acknowledges support by Brazilian National Council for Scientific and Technological Development (CNPq) under Grant \# 118536/2023-0

L.C.\ acknowledges support by São Paulo Research Foundation (FAPESP) under Grant
\# 2023/02240-0.

L.L.\ acknowledges support by São Paulo Research Foundation (FAPESP) under Grant
\# 2023/02397-7.

Part of this work was carried out during the undergraduate research program "Jornadas de Pesquisa em Matemática do ICMC 2023" held at the Instituto de Ciências Matemáticas e de Computação (ICMC) - Universidade de São Paulo (USP), and which was partially supported by the Centro de Ciências Matemáticas Aplicadas à Indústria (CeMEAI - CEPID) under FAPESP Grant \# 2013/07375-0. Research carried out using the computational resources of the Center for Mathematical Sciences Applied to Industry (CeMEAI) funded by FAPESP (grant 2013/07375-0). We thank the hospitality of ICMC-USP during the program.

\section{Examples}\label{sec:examples}

In this section we describe  examples of random dice  such that the probability
of observing intransitivity is asymptotically null by applying the
Theorem~\ref{thm:nointransitive} and we also illustrate some cases of
asymptotically intransitive random dice.

\subsection{Dice with same laws}\hfill

We start by recalling that when all laws are the same, $\mc L^{(1)}=\cdots= \mc
L^{(\ell)}$, then condition~\eqref{eq:condttslowgrowthpk_lim} always holds, see Remark~\ref{rem:asymptotic_no_tie}.
The first example has been already commented below Theorem~\ref{teo:CLT}: assuming that each die has the same number of faces, and those faces are i.i.d.\ random variables with the same continuous (but not necessarily absolutely continuous) law $\mc L$, the probability that the random dice $(D^{(1)},\ldots,D^{(\ell)})$ are intransitive goes to zero as $m\to \infty$.
Theorem~\ref{thm:nointransitive} straightforwardly extends this to a  more general situation, as we explain in the next paragraph.

If the law of any die is given by a same continuous  law $\mc L$, there will be no ties, so \eqref{eq:asymptoticnotie} holds trivially. Moreover, $\mb p_k(m), \sigma_k(m)$ and $\gamma_k(m)$ do not depend on $m$ and neither on $k$, hence it is trivial to check Assumption~\ref{assumption:main}-(ii). Assuming that the quantity of faces in the $k$-th die is given by $n_k=f_km$, where each $f_k$ is a positive constant, we verify   Assumption~\ref{assumption:main}-(i). These conditions together lead to the conclusion, by Theorem~\ref{thm:nointransitive}, that the sequence of dice constructed in this way has asymptotically null probability of being intransitive. That is, under a continuous law, intransitivity is not achievable regardless of the quantities of faces in each die, provided these quantities are proportional to the scaling parameter $m$.

Let us see now a discrete example. Assume that all $\ell$ dice have same law $\mc L_m$, the law of a geometric random variable of parameter $p$. Since
\begin{equation*}
\bb P\left(D^{(k)}_1(m)=D_1^{(k+1)}(m)\right)\;=\; \sum_{i=1}^\infty (1-p)^{2(i-1)}p^{2} \;=\;\frac{p}{2-p}\,,
\end{equation*}
in order to ensure  condition \eqref{eq:asymptoticnotie} on ties, it is necessary to impose that $p=p(m)\to 0$ as $m\to\infty$. In this case, a long but elementary calculation yields
\begin{align*}
\mb p_k(m)&\;=\;\frac{1-p}{2-p}&&\longrightarrow \frac{1}{2}\,, \\
\mb q_k(m)&\;=\;\frac{(1-p)^2}{3-3p+p^2} &&\longrightarrow \frac{1}{3}\,,\\
\mb r_k(m)&\;=\;\frac{(1-p)(2-2p+p^2)}{(2-p)(3-3p+p^2)} &&\longrightarrow \frac{1}{3}\,,\\
\mb s_k(m)&\;=\; \frac{(1-p)^3}{(2-p)(3-3p+p^2)} &&\longrightarrow \frac{1}{6}.
\end{align*}
Recalling the formulas \eqref{eq:def_sigma_k} and \eqref{eq:def_gamma_k} for $\sigma_k$ and $\gamma_k$, respectively, the previous computations give us that, as $m\to\infty$,
\begin{align*}
& \sigma_k(m)\;\longrightarrow\;\sigma(\infty)= \sqrt{\frac{f_kf_{k+1}(f_k+f_{k+1})}{12}}\in (0,\infty)\,, \\
    & \gamma_k(m)\;\longrightarrow\;\gamma(\infty)
    = -\frac{f_{k-1}f_kf_{k+1}}{\sqrt{f_{k-1}f_k(f_{k-1}+f_{k})}\sqrt{f_{k}f_{k+1}(f_{k}+f_{k+1})}}
    \in (-1,0)\,, (\text{see Proposition~\ref{prop:propertiescoeffgammak}})
\end{align*}
so all assumptions of  Theorem~\ref{thm:nointransitive} have been checked, hence the probability of observing a sequence of  intransitive dice is asymptotically null. 

\subsection{Dice with different laws}
\label{sub:dice_with_different_laws}\hfill 

Another interesting application of Theorems~\ref{teo:CLT},
\ref{thm:intransitive} and~\ref{thm:nointransitive} is the possibility of
allowing different laws for each die. It is not straightforward to make a
thorough exploration of such possibility, since we have much freedom of choice.
However, a setup that allows for general results comes from the idea of
building random dice from a known initial collection of deterministic dice, as
follows:
\begin{definition}
\label{defi:blowupseq}
Given a deterministic set of dice $A^{(k)}$ with $k \in [\ell]$ and $m$ faces
each, define its \textbf{blow up sequence} as the sequence of random
dice $B^{(k)}(n)$ with $n$ faces and $k \in [\ell]$ such that 
the law $\mc L_k$ is uniform over the faces of $A^{(k)}$.
\end{definition}
An advantage of working with blow up sequences is that parameters $\mb p_k, \mb
r_k, \mb q_k, \mb s_k$ are constants depending only on the original set of dice
$\{A^{(k)}\}$. Indeed, if the faces of die $A^{(k)}$ are given by $A^{(k)} = (a^{(k)}_1, \ldots, a^{(k)}_{m})$ we have:
\begin{equation}
    \label{eq:blowup_coefficients}
    \begin{aligned}
    \mb p_k
        &= \mathbb P(B_1^{(k)} > B_1^{(k+1)})
        &&= m^{-2} \cdot |\{(i,j); a_i^{(k)} > a_j^{(k+1)}\}|;\\
    \mb q_k
        &= \mathbb P(B_1^{(k)} > B_1^{(k+1)}, B_2^{(k)} > B_1^{(k+1)})
        &&= m^{-3} \cdot |\{(i,j,z); a_i^{(k)} > a_z^{(k+1)}, a_j^{(k)} >
        a_z^{(k+1)}\}|;\\
    \mb r_k
        &= \mathbb P(B_1^{(k)} > B_1^{(k+1)}, B_1^{(k)} > B_2^{(k+1)})
        &&= m^{-3} \cdot |\{(i,j,z); a_i^{(k)} > a_j^{(k+1)}, a_i^{(k)} >
        a_z^{(k+1)}\}|;\\
    \mb s_k
        &= \bb P(B^{(k-1)}_1 > B^{(k)}_1 > B^{(k+1)}_1),
        &&= m^{-3} \cdot |\{(i,j,z); a_i^{(k-1)} > a_j^{(k)} > a_z^{(k+1)}\}|.
    \end{aligned}
\end{equation}
For instance, the property $A^{(k)} \btt A^{(k+1)}$ is equivalent to 
\begin{equation*}
\mb p_k
    = \mathbb P(B_1^{(k)} > B_1^{(k+1)})
    = \mathbb P\bigl(\text{Unif}\{A^{(k)}\} > \text{Unif}\{A^{(k+1)}\}\bigr)
    > \mathbb P\bigl(\text{Unif}\{A^{(k+1)}\} > \text{Unif}\{A^{(k)}\}\bigr)
\end{equation*}
and when there are no ties, this simplifies to 
$$\mb p_k = \mathbb P\bigl(\text{Unif}\{A^{(k)}\} > \text{Unif}\{A^{(k+1)}\}\bigr) > 1/2.$$
The next proposition shows that starting from a deterministic set of
intransitive dice leads to the probability of intransitivity converging
to 1 exponentially fast.
\begin{proposition}
	\label{prop:asymp_intransitive_example}
	Let $A^{(k)}$ for $k \in [\ell]$ be a set of $\ell$ deterministic dice
    with $m$ faces that is known to be intransitive:
	$A^{(k)} \btt A^{(k+1)}$ for every $k$. Consider its blow up sequence
    $(B^{(k)}(n): k \in [\ell])$ as in Definition~\ref{defi:blowupseq}.
    Then, there is a constant $c > 0$, depending
	only on the set of dice $A^{(k)}$, such that
	\begin{equation}
	\label{eq:asymp_intransitive_example}
	\bb P\big(B^{(1)} \btt B^{(2)} \btt \cdots \btt B^{(\ell)} \btt B^{(1)}\big)
	\;=\; 1 + o(e^{-cn})\,, \quad \text{as $n \to \infty$}.
	\end{equation}
\end{proposition}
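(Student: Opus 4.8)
The plan is to reduce the statement to an exponential concentration estimate for each of the $\ell$ consecutive comparisons, and then to apply a bounded--differences inequality. Writing $n_k=n$ for every $k$ (all blow up dice have $n$ faces), let $N_k,E_k$ be as in \eqref{eq:def_Nk}--\eqref{eq:def_Ek} for the dice $B^{(k)}$, and set in addition $N_k':=\sum_{i,j}\ind_{B_i^{(k)}<B_j^{(k+1)}}$, so that $N_k+N_k'+E_k=n^2$. By \eqref{eq:fundcomparison}, the event $B^{(k)}\btt B^{(k+1)}$ is equivalent to $N_k>\tfrac12 n^2-\tfrac12 E_k=\tfrac12(N_k+N_k')$, i.e.\ to $W_k>0$, where
\[
W_k\;:=\;N_k-N_k'\;=\;\sum_{i=1}^n\sum_{j=1}^n \sgn\!\big(B_i^{(k)}-B_j^{(k+1)}\big).
\]
Hence a union bound gives $\bb P(\text{not intransitive})\le \sum_{k=1}^\ell \bb P(W_k\le 0)$, and it is enough to show that each term decays exponentially in $n$ at a rate depending only on $\{A^{(k)}\}$.

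The next step is to record a uniform positive drift. From \eqref{eq:blowup_coefficients},
\[
\bb E(W_k)\;=\;n^2\big(\mb p_k-\bb P(B_1^{(k)}<B_1^{(k+1)})\big)\;=\;\frac{n^2}{m^2}\Big(\big|\{(i,j):a_i^{(k)}>a_j^{(k+1)}\}\big|-\big|\{(i,j):a_i^{(k)}<a_j^{(k+1)}\}\big|\Big),
\]
and since $A^{(k)}\btt A^{(k+1)}$ the integer in parentheses is at least $1$; thus $\bb E(W_k)\ge n^2/m^2$. Set $\delta:=\tfrac12 m^{-2}>0$, so that $\bb E(W_k)\ge 2\delta n^2$ for all $k$, with $\delta$ depending only on the deterministic dice $\{A^{(k)}\}$.

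For the concentration step, I would view $W_k$, for fixed $k$, as a function of the $2n$ mutually independent random variables $B_1^{(k)},\dots,B_n^{(k)},B_1^{(k+1)},\dots,B_n^{(k+1)}$. Since $\sgn$ is bounded by $1$ and each of these variables appears in exactly $n$ of the $n^2$ summands, changing any one of them alters $W_k$ by at most $2n$. McDiarmid's bounded--differences inequality (equivalently, Hoeffding's inequality for two--sample $U$--statistics with a bounded kernel) then yields, for every $t>0$,
\[
\bb P\big(W_k\le \bb E(W_k)-t\big)\;\le\;\exp\!\Big(-\frac{2t^2}{2n\,(2n)^2}\Big)\;=\;\exp\!\Big(-\frac{t^2}{4n^3}\Big).
\]
Choosing $t=\bb E(W_k)\ge 2\delta n^2$ gives $\bb P(W_k\le 0)\le \exp(-\delta^2 n)$, hence $\bb P(\text{not intransitive})\le \ell\,e^{-\delta^2 n}$. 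As this is $o(e^{-cn})$ for any $0<c<\delta^2$, the bound \eqref{eq:asymp_intransitive_example} follows with, say, $c:=\delta^2/2=1/(8m^4)$ (any $c<\delta^2$ works), which depends only on $\{A^{(k)}\}$.

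I do not expect a serious obstacle; the one point that needs care is the balance of powers of $n$. The per--coordinate bounded difference is of order $n$ rather than $O(1)$, so $n^3$ appears in the denominator of the exponent, but the drift $\bb E(W_k)$ is of order $n^2$, so that $(\bb E W_k)^2/n^3$ is genuinely linear in $n$ and the estimate is exponentially small. It is worth noting that Theorem~\ref{teo:CLT} by itself does not give this: it only provides convergence in distribution (the limiting Gaussian probability being $1$ in this degenerate regime), with no control on the rate, which is precisely what the bounded--differences estimate supplies.
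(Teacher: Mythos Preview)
Your proof is correct. Both you and the paper reduce to a union bound over the $\ell$ pairwise comparisons and then invoke a Hoeffding-type concentration inequality, but you apply it at a different level. The paper first controls the $\ell m$ empirical face counts $N_{k,i}$ (each binomial) by Hoeffding, intersects the resulting good events, and then uses a continuity argument to pick $\varepsilon$ so that on the good event the bilinear victory counts $\sum N_{k,i}N_{k+1,j}$ inherit the deterministic ordering. You instead apply McDiarmid's bounded-differences inequality directly to the signed statistic $W_k=N_k-N_k'$, viewed as a function of the $2n$ independent faces with per-coordinate oscillation $2n$. Your route is more streamlined: it bypasses the intermediate face-count concentration and the continuity step, and it gives an explicit rate $c=1/(8m^4)$, whereas the paper's $\varepsilon$ (and hence its $c$) is obtained nonconstructively. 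The paper's approach, on the other hand, is a bit more elementary in that it only needs scalar binomial tails rather than the full bounded-differences machinery.
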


\begin{proof}
	The deterministic dice $A^{(k)}$, $k \in [\ell]$ form an intransitive cycle. This
	can be translated into the following collection of inequalities:
	\begin{equation}
	\label{eq:dice_Ak_condition}
	\sum_{\substack{i,j:\\ a^{(k)}_i > a^{(k+1)}_j}} 1
	\;>\; \sum_{\substack{i,j:\\ a^{(k)}_i < a^{(k+1)}_j}} 1\,,
	\quad \text{for every $k \in [\ell]$}.
	\end{equation}
	Let $N_{k,i}$ be the random variable that counts the number of appearances of
	face $a^{(k)}_i$ at dice $B^{(k)}$. As discussed in Section~\ref{sec:deterministic},
	the quantity $N_{k,i}$ represents the weight of face $a^{(k)}_i$ in die
	$B^{(k)}$. Hence, we can write event $B^{(k)} \btt B^{(k+1)}$ as a function
	of $N_{k,i}$ and $N_{k+1,j}$:
	\begin{equation}
	\sum_{\substack{i,j:\\ a^{(k)}_i > a^{(k+1)}_j}} N_{k,i} N_{k+1,j}
	\;>\; \sum_{\substack{i,j:\\ a^{(k)}_i < a^{(k+1)}_j}} N_{k,i} N_{k+1,j}\,,
	\quad \text{for every $k \in [\ell]$}.
	\end{equation}
	It is clear that $N_{k,i}$ has a binomial distribution with parameters $n$ and
	$\frac{1}{m}$, and by Hoeffding's inequality
	\begin{equation}
	\label{eq:concentration_N_k_i}
	\bb P \Bigl( \Bigl|N_{k,i} - \frac{n}{m}\Bigr| > \varepsilon n\Bigr)
	\;\le\; 2 e^{- 2 \varepsilon^2 n}\,.
	\end{equation}
	Define the event $G := \bigcap_{k,i} \bigl\{
	\bigl|N_{k,i} - \frac{n}{m}\bigr| \le \varepsilon n \bigr\}$. By union bound,
	\begin{equation*}
	\bb P(G^c)
	\;=\; \bb P \Bigl( \bigcup_{k,i} \bigl\{
	\bigl|N_{k,i} - \frac{n}{m}\bigr| > \varepsilon n \bigr\}\Bigr)
	\;\le\; 2\ell m\, e^{- \varepsilon^2 n}\,.
	\end{equation*}
	Notice that on event $G$ we have that
	\begin{equation}
	\label{eq:estimates_prod_Ni_Nj}
	n^2 \Bigl(\frac{1}{m} - \varepsilon \Bigr)^2
	\;<\; N_{k,i} N_{k+1,j}
	\;<\; n^2 \Bigl(\frac{1}{m} + \varepsilon \Bigr)^2.
	\end{equation}
	From~\eqref{eq:dice_Ak_condition} it is clear that
	\begin{equation*}
	\sum_{\substack{i,j:\\ a^{(k)}_i > a^{(k+1)}_j}} 1
	 \;-\;  \sum_{\substack{i,j:\\ a^{(k)}_i < a^{(k+1)}_j}} 1
	\;\ge\; 1\,,\quad \text{for every $k \in [\ell]$}.
	\end{equation*}
	By continuity, one can choose $\varepsilon > 0$ such that
	\begin{equation*}
	\Bigl(\frac{1}{m} - \varepsilon \Bigr)^2
	\sum_{\mathclap{\substack{i,j:\\ a^{(k)}_i > a^{(k+1)}_j}}} 1
	\quad - \quad \Bigl(\frac{1}{m} + \varepsilon \Bigr)^2
	\sum_{\mathclap{\substack{i,j:\\ a^{(k)}_i < a^{(k+1)}_j}}} 1
	\;>\; \frac{1}{2m^2}
	\;>\;0\,,\quad \text{for every $k \in [\ell]$}.
	\end{equation*}
	Apply the upper estimate of~\eqref{eq:estimates_prod_Ni_Nj} for pairs $i,j$
	with $a^{(k)}_i < a^{(k+1)}_j$ and the lower estimate for pairs with
	$a^{(k)}_i > a^{(k+1)}_j$. Then, on the event $G$ we have
	\begin{align*}
	\sum_{\mathclap{\substack{i,j:\\ a^{(k)}_i < a^{(k+1)}_j}}} N_{k,i} N_{k+1,j}
	\;\;<\;\; n^2 \Bigl(\frac{1}{m} + \varepsilon \Bigr)^2
	\sum_{\mathclap{\substack{i,j:\\ a^{(k)}_i < a^{(k+1)}_j}}} 1
	\;\;<\;\; n^2 \Bigl(\frac{1}{m} - \varepsilon \Bigr)^2
	\sum_{\mathclap{\substack{i,j:\\ a^{(k)}_i > a^{(k+1)}_j}}} 1
	\;\;\;<\;\;\;
	\sum_{\mathclap{\substack{i,j:\\ a^{(k)}_i > a^{(k+1)}_j}}} N_{k,i} N_{k+1,j}\,,
	\end{align*}
	implying that on $G$ we have $B^{(k)} \btt B^{(k+1)}$ for every $k \in [\ell]$.
\end{proof}
As an application of above, take for instance the Effron's dice, which are given by
\begin{align*}
&A=(0, 0, 4, 4, 4, 4),\quad
B=(3, 3, 3, 3, 3, 3)\\
&C = (2, 2, 2, 2, 6, 6),\quad
D = (1, 1, 1, 5, 5, 5),
\end{align*}
as mentioned in the Introduction. In this case, the laws associated to  each die would be:
\begin{align*}
&\mc L_A= \frac{1}{3}\delta_0+ \frac{2}{3}\delta_4\,,\quad
\mc L_B=\delta_3\,,\quad \mc L_C = \frac{2}{3}\delta_2 +\frac{1}{3} \delta_6\,,\quad\text{ and }\quad\mc L_D = \frac{1}{2}\delta_1+ \frac{1}{2}\delta_5\,.
\end{align*}

Of course, since the corresponding  sequence of dice  with the above laws is asymptotically intransitive, it cannot fulfill the assumptions of Theorem~\ref{thm:nointransitive}. Note that there are no ties in Effron's dice example, so condition \eqref{eq:asymptoticnotie} is trivially satisfied.
The assumptions of Theorem~\ref{thm:nointransitive} are not satisfied because $\mb p_k > \frac{1}{2}$, which tells us that the quantity of victories of a die against another die becomes deterministic (in view of the law of large numbers) as the number of faces in each die increases.

The examples covered by Proposition~\ref{prop:asymp_intransitive_example} are
in some way straightforward. For instance, if $\mb p_k > \frac{1}{2}$ for every $k \in
[\ell]$ then we have asymptotic intransitivity with probability 1. If the starting
collection of dice $\{A^{(k)}\}$ is `more balanced', then intransitivity is not
as immediate.

\begin{example}
\label{ex:blowup_close_intransitive}
Consider the collection of 3 six-sided dice:
\begin{equation*}
\text{$A = (18,13,11,7,6,2)$,
$B = (17,15,10,8,4,3)$ and $C = (16,14,12,9,5,1)$.}
\end{equation*}
The choice of such a sequence was based on it having a `very symmetrical'
string representation $\mb W = ABCBCACABCBAACBBAC$, see
Section~\ref{sec:dicewords} for more details. Although the blow up sequence
has different laws for each dice, straighforward counting (using the
representation as $\mb W$, for instance) shows that parameters $\mb p_k, \mb
r_k, \mb q_k, \mb s_k$ do not depend on $k$ and are given by
\begin{equation*}
\mb p_k = \frac{1}{2},
    \qquad \mb q_k = \frac{70}{216},
    \qquad \mb r_k = \frac{76}{216},
    \quad \text{and}
    \quad \mb s_k = \frac{1}{6},
\end{equation*}
implying that $\sigma_k = (\frac{19}{108})^{1/2}$ and
$\gamma_k = -\frac{9}{19}$. Finally, computing the determinant
of matrix $\Sigma$, we obtain $\det \Sigma = \frac{784}{6859} > 0$ which implies
asymptotic intransitivity with probability bounded away from zero and one. 
\end{example}

Notice that in Example~\ref{ex:blowup_close_intransitive} we are quite close to
the coefficients $\mb p_k = \frac{1}{2}, \mb q_k = \mb r_k = \frac{1}{3}, \mb
s_k = \frac{1}{6}$, which are obtained for iid continuous distributions
(indeed, notice that $\frac{70}{216} = 0.32\overline{407}$ and
$\frac{76}{216} = 0.35\overline{185}$).
A natural follow up question is if it is possible to find a starting collection
of dice $A^{(k)}$ such that the above coefficients are achieved for the blow up
sequence. We show that this is actually
impossible.

\begin{proposition}
\label{prop:blowup_impossible_nointransitive}
    Let $(A^{(k)}:\ k \in [\ell])$ be a collection of $m$ sided no-tie dice and
    $\mb p_k, \mb q_k, \mb r_k, \mb s_k$ be the coefficients of its blow up
    sequence. If $\mb p_k = \frac{1}{2}$, then
    $\mb q_k+\mb r_k > \frac{2}{3}$. In particular, one of $\mb q_k$ or $\mb r_k$ is strictly greater than $\frac{1}{3}$
\end{proposition}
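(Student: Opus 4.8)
The plan is to work directly with the deterministic dice $A^{(k)}$ via their face values, using the counting formulas in \eqref{eq:blowup_coefficients}. Fix $k$ and write $a_1,\ldots,a_m$ for the faces of $A^{(k)}$ and $b_1,\ldots,b_m$ for the faces of $A^{(k+1)}$; since the collection is no-tie, all $2m$ numbers are distinct. For each face $b_j$ of $A^{(k+1)}$, let $w_j \coloneqq |\{i : a_i > b_j\}|$ be the number of faces of $A^{(k)}$ beating $b_j$; symmetrically, for each face $a_i$ of $A^{(k)}$ let $v_i \coloneqq |\{j : b_j < a_i\}|$. The key observation is the double-counting identity $\sum_{j=1}^m w_j = \sum_{i=1}^m v_i = m^2 \mb p_k = m^2/2$ (using $\mb p_k = \frac12$ and no ties). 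Moreover, from \eqref{eq:blowup_coefficients} we have $m^3 \mb q_k = \sum_{j=1}^m w_j^2$ (choosing two faces of $A^{(k)}$, both beating a common face $b_j$ of $A^{(k+1)}$) and $m^3 \mb r_k = \sum_{i=1}^m v_i^2$.

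The heart of the argument is then a convexity/variance estimate. Since $\sum_j w_j = m^2/2$ with $m$ terms, Cauchy--Schwarz (equivalently, nonnegativity of the variance) gives $\sum_j w_j^2 \ge \frac{1}{m}\bigl(\sum_j w_j\bigr)^2 = m^3/4$, and similarly $\sum_i v_i^2 \ge m^3/4$, so $\mb q_k + \mb r_k \ge \frac12$ — but this only gives $\frac12$, not $\frac23$, so a sharper input is needed. The improvement must come from the fact that the $w_j$ are \emph{integers} taking \emph{distinct-ish} values, or more precisely from a second identity linking the two families. I would exploit the following: if we sort the faces of $A^{(k)}$ increasingly as $a_{(1)} < \cdots < a_{(m)}$, then among all $2m$ numbers the value $a_{(t)}$ has exactly $t-1$ faces of $A^{(k)}$ below it and some number $u_t$ of faces of $A^{(k+1)}$ below it, and one checks $v_{i}$ runs over exactly the multiset $\{u_1,\ldots,u_m\}$ while $m - w_j$ runs over a complementary multiset; the no-tie interleaving forces $\sum_t (u_t + (\text{rank contribution})) $ to telescope. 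Concretely, one obtains a third relation of the form $\sum_j w_j(w_j - 1) + \sum_i v_i(v_i+1) = $ (something like $2\sum_{a_i > b_j} 1 \cdot(\ldots)$), i.e. the triple-counting $m^3(\mb q_k + \mb r_k)$ can be rewritten in terms of $\sum_{a_i > b_j}(\text{rank})$, which by the sorted-interleaving structure equals a fixed quadratic in $m$ that is strictly larger than $m^3 \cdot \frac23$ when $\mb p_k = \frac12$ and no equality is possible because of parity/integrality.

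The main obstacle is pinning down exactly which auxiliary identity makes the bound $\frac23$ fall out cleanly: the naive variance bound is off by a factor, so I expect to need the relation $\sum_{i,j} \ind_{a_i > b_j}(\ind_{a_i > b_j}) $ combined with an identity expressing $\sum_j w_j^2 + \sum_i v_i^2$ via $\sum_{i<i'}\ind_{a_i, a_{i'} > \text{common } b_j} + \sum_{j<j'}\ind_{b_j, b_{j'} < \text{common }a_i}$, and then comparing with the analogous count for a ``perfectly interleaved'' configuration $A = (1,3,5,\ldots), B=(2,4,6,\ldots)$ for which $\mb q_k = \mb r_k$ can be computed exactly. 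For the perfectly alternating dice one gets $\mb q_k + \mb r_k = \frac{2}{3} + \Theta(1/m) > \frac23$, and a swap/exchange argument shows any no-tie configuration with $\mb p_k = \frac12$ has $\mb q_k + \mb r_k$ at least this value — monotonicity under adjacent transpositions of the sorted list of all $2m$ values that preserve $\mb p_k = \frac12$. Once $\mb q_k + \mb r_k > \frac23$ is established, the final sentence is immediate: if both $\mb q_k \le \frac13$ and $\mb r_k \le \frac13$ then their sum would be $\le \frac23$, a contradiction, so at least one strictly exceeds $\frac13$.
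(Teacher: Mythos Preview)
Your setup is correct and matches the paper's: writing $w_j=|\{i:a_i>b_j\}|$ and $v_i=|\{j:b_j<a_i\}|$, you get $m^3\mb q_k=\sum_j w_j^2$, $m^3\mb r_k=\sum_i v_i^2$, and $\sum_j w_j=\sum_i v_i=m^2/2$. You also correctly diagnose that applying Cauchy--Schwarz separately to each sum only yields $\mb q_k+\mb r_k\ge\tfrac12$, which is too weak. But from that point on the proposal is hand-waving: the ``swap/exchange argument'' showing that the perfectly interleaved configuration minimises $\mb q_k+\mb r_k$ is asserted, not proved, and it is not even clear which interleaving has $\mb p_k=\tfrac12$ (neither $ABAB\cdots$ nor $BABA\cdots$ does). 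The vague ``third relation'' you allude to is never written down.

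The missing idea, which the paper supplies, is to \emph{avoid the second sequence $v_i$ entirely} and express $\mb r_k$ in terms of the \emph{same} sequence $\alpha_j\coloneqq w_j$ (after sorting the $b_j$'s in decreasing order). The trick is the order-statistic identity
\[
\mb r_k=\bb P\bigl(a_{U_1}>\max\{b_{U_2},b_{U_3}\}\bigr)=\bb P\bigl(a_{U_1}>b_{\min\{U_2,U_3\}}\bigr)=\frac{1}{m^3}\sum_{j=1}^m\bigl(2(m-j)+1\bigr)\alpha_j,
\]
which is \emph{linear} in the $\alpha_j$. Adding this to $\mb q_k=m^{-3}\sum_j\alpha_j^2$ and completing the square gives the exact identity
\[
\mb q_k+\mb r_k=\frac{2}{3}-\frac{1}{6m^2}+\frac{1}{m^3}\sum_{j=1}^m(j-\alpha_j)^2.
\]
Now a single QM--AM on the shifted sequence $(j-\alpha_j)$, whose sum equals $\tfrac{m(m+1)}{2}-\tfrac{m^2}{2}=\tfrac{m}{2}$, gives $\sum_j(j-\alpha_j)^2\ge m/4$, and hence $\mb q_k+\mb r_k\ge \tfrac23+\tfrac{1}{12m^2}>\tfrac23$. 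So the sharp bound comes not from an extremal/swap argument but from recombining $\mb q_k$ and $\mb r_k$ into a single sum of squares; your proposal never reaches this reduction.
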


\begin{proof}
    Fix $k$ such that $\mb p_k = \frac{1}{2}$.
    The coefficient $\mb r_k$ is given by 
    \begin{equation*}
    \mb r_k
        = m^{-3} \cdot
            |\{(i,j,z); a_i^{(k)} > a_j^{(k+1)}, a_i^{(k)} > a_z^{(k+1)}\}|.
    \end{equation*}
    Let us assume that $(a_1^{(k)}, \ldots, a_m^{(k)})$ and 
    $(a_1^{(k+1)}, \ldots, a_m^{(k+1)})$, the faces of $A^{(k)}$ and
    $A^{(k+1)}$, are both in decreasing order, without loss of generality.
    Take this collection of $2m$ values and sort it in decreasing order
    (using that we have no ties) to form a word $\mb W$ with length $2m$
    made of two letters, $A$ if the face value comes from $A^{(k)}$ and
    $B$ if the face value comes from $A^{(k+1)}$.
    For instance, if $A^{(k)} = (6,4,3)$ and $A^{(k+1)} = (5,2,1)$ then
    $\mb W = ABAABB$. For more details on these string representations, see
    Section~\ref{sec:dicewords}.
    
    Let us define $\alpha_i$ as the number of $A$s before the $i$-th $B$ (for $\mb W =
    ABAABB$, we have $\alpha_1=1, \alpha_2=\alpha_3=3$). Then,
    by~\eqref{eq:blowup_coefficients} and the hypotheses on $\mb p_k$
    we have
    \begin{equation*}
        \frac{1}{2} = \mb p_k = \frac{1}{m^2} \sum_{k=1}^m \alpha_k
        \quad \text{and} \quad
        \mb q_k = \frac{1}{m^3} \sum_{k=1}^m \alpha_k^{2}.
    \end{equation*}
    Moreover, if $U_1, U_2, U_3$ are iid.\ uniform random variables on $[m]$ then
    \begin{equation*}
        \mb r_k
        = \bb P \bigl(a^{(k)}_{U_1} > \max\{a^{(k+1)}_{U_2}, a^{(k+1)}_{U_3}\}\bigr)
        = \bb P \bigl(a^{(k)}_{U_1} > a^{(k+1)}_{\min\{U_2,U_3\}}\bigr)
    \end{equation*}
    and we can also interpret this probability in terms of positioning of $A$s and
    $B$s in $\mb W$. Noticing that $\bb P(\min\{U_2,U_3\}=k) = \frac{2(m-k)+1}{m^2}$,
    it follows that
    \begin{align*}
    \mb r_k
        &= \frac{1}{m^3} \sum_{k=1}^m (2(m-k) + 1 )\alpha_k
        = \frac{2m+1}{m^3} \sum_{k=1}^m \alpha_k
            - \frac{2}{m^3} \sum_{k=1}^m k\alpha_k\\
        &= \frac{2m+1}{2m} - \frac{2}{m^3} \sum_{k=1}^m k\alpha_k.
    \end{align*}
    Hence,
    \begin{align*}
        \mb q_k+\mb r_k&=\frac{2m+1}{2m} + \frac{1}{m^3} \sum_{k=1}^m (k-\alpha_k)^2-\frac{1}{m^3}\sum_{k=1}^{m}k^2\\
        &=\frac{2}{3}-\frac{1}{6m^2}+\frac{1}{m^3} \sum_{k=1}^m (k-\alpha_k)^2.
    \end{align*}
    
    Invoking once again the hypothesis over $\mb p_k$ and the QM-AM inequality,
    \begin{equation*}
        \frac{1}{m^3} \sum_{k=1}^m (k-\alpha_k)^2 
        \ge \frac{1}{m^2} \left(\frac{1}{m}\sum_{k=1}^m (k-\alpha_k)\right)^2\\
        = \frac{1}{4m^2},
    \end{equation*}
    and the result follows.
\end{proof}

As a final example, we notice that although we cannot recover the coefficients $\mb p_k = \frac{1}{2}, \mb q_k = \mb r_k = \frac{1}{3}, \mb
s_k = \frac{1}{6}$ using blow up sequences, they are still able to provide examples with probability of an intransitive cycle going to zero, when we have different laws.
\begin{example}
\label{ex:blowup_not_intransitive}
Consider the collection of 3 six-sided dice:
\begin{equation*}
\text{$A = (15,14,13,6,5,4)$,
$B = (18,17,16,3,2,1)$ and $C = (12,11,10,9,8,7)$,}
\end{equation*}
whose string representation is $\mb W = BBBAAACCCCCCAAABBB$. Parameters $\mb p_k, \mb r_k, \mb q_k, \mb s_k$ are given by
\begin{align*}
\mb p_1 &= \tfrac{1}{2}, & \mb p_2&= \tfrac{1}{2} & \mb p_3&= \tfrac{1}{2} \\
\mb q_1 &= \tfrac{1}{2}, & \mb q_2&= \tfrac{1}{4} & \mb q_3&= \tfrac{1}{2} \\
\mb r_1 &= \tfrac{1}{4}, & \mb r_2&= \tfrac{1}{2} & \mb r_3&= \tfrac{1}{4} \\
\mb s_1 &= \tfrac{1}{4}, & \mb s_2&= 0            & \mb s_3&= \tfrac{1}{4}.
\end{align*}
For every $k \in \{1,2,3\}$ we have $\mb q_k + \mb r_k = \frac{1}{2} + \frac{1}{4}$, implying that $\sigma_k = (\mb q_k + \mb r_k - 2\mb p_k^2)^{1/2} = \frac{1}{2}$ and also that
$\gamma_1 = \gamma_3 = 0$, while $\gamma_2 = -1$. Finally, matrix $\Sigma$
is given by
\begin{equation*}
    \Sigma = \begin{bmatrix}
        1 & -1 & 0 \\ -1 & 1 & 0 \\ 0 & 0 & 1
    \end{bmatrix},
\end{equation*}
and has zero as an eigenvalue, whose eigenspace is generated by $(1,1,0)$. This means that $(X_1,X_2,X_3)$ is supported on the plane generated by $(0,0,1)$ and $(1,-1,0)$, and this plane intersects $[0,\infty)^3$ on the set $\{(0,0,z); \ge 0\}$. It follows that
for this blow up sequence the probability of intransitivity is asymptotically zero. 
\end{example}

\section{On deterministic intransitive dice}\label{sec:deterministic}

The main goal of this section is to prove Theorems~\ref{different_faces} and \ref{thm2}.
We keep using the notation introduced in Section~\ref{sec:determdice}, but in this section every die considered is deterministic, that is, the entries $D_i^{(j)}$ of the die $D^{(j)}$ in a collection $\mb D=(D^{(1)},\hdots, D^{(\ell)})$ are always prescribed deterministic numbers rather than nontrivial random variables.

As explained after Theorem~\ref{different_faces}, when investigating existence of {\it no-tie} intransitive dice, only the relative ordering of the faces of the dice matters, but not their particular values. Thus, in this section we also restrict to dice whose faces' entries are positive integer numbers, and always pairwise distinct.

\subsection{A bijection between dice and words}\label{sec:dicewords}\hfill

We look at the set of dice labels $\mc A= \{D^{(1)},\hdots, D^{(\ell)}\}$ as an alphabet, and now explain how to map dice to words. Let $\mc W(n_1,\hdots, n_\ell)$ be the set of strings (or words) with $n_1+\cdots+n_\ell$ letters in the alphabet $\mc A$, such that each letter $D^{(k)}$ appears exactly $n_k$ times. There is a natural bijection\protect\footnote{\,The idea of translating the dice as strings was inspired by a video on the YouTube channel Polylog: ``\href{https://youtu.be/-64UT8yikng}{We designed special dice using math, but there’s a catch}\textquotedblright , available at \url{https://youtu.be/-64UT8yikng}.}
 $\mc D(n_1,\hdots, n_\ell)\stackrel{\pi}{\mapsto }\mc W(n_1,\hdots, n_\ell)$: a collection of dice $\mb D$ is mapped to the word $\mb W=W_1\cdots W_n$ determined uniquely by the rule that the letter $W_i$ is equal to $D^{(k)}$ if the number $n-i+1$ appears in a face of the die $D^{(k)}$. This bijection for $\mc D(4,4,4)$ is represented in Figure \ref{fig: dado vs string}.

   \begin{figure}[!htb]
            \vspace{-2cm}
                \centering
            \begin{tikzpicture}{scale=1}
                    \dado{A}{4}{{12, 8, 4, 3}}{(-4, 3.5)}{-6/4}{{}}
                    \dado{B}{4}{{11, 7, 6, 2}}{(0, 3.5)}{-6/4}{{}}
                    \dado{C}{4}{{10, 9, 5, 1}}{(4, 3.5)}{-6/4}{{}}
                \end{tikzpicture}

                \vspace{.5cm}
                \begin{tabular}{m{0.35cm}m{0.35cm}m{0.35cm}m{0.35cm}m{0.35cm}m{0.35cm}m{0.35cm}m{0.35cm}m{0.35cm}m{0.35cm}m{0.35cm}m{0.35cm}m{0.35cm}}
                        \multicolumn{1}{c||}{{$A$}}&
                        \multicolumn{1}{c|}{12}&
                        \multicolumn{1}{c|}{}&
                        \multicolumn{1}{c|}{}&
                        \multicolumn{1}{c|}{}&
                        \multicolumn{1}{c|}{8}&
                        \multicolumn{1}{c|}{}&
                        \multicolumn{1}{c|}{}&
                        \multicolumn{1}{c|}{}&
                        \multicolumn{1}{c|}{4}&
                        \multicolumn{1}{c|}{3}&
                        \multicolumn{1}{c|}{}&
                        \multicolumn{1}{c|}{}
                        \\[1ex]
                        \multicolumn{1}{c||}{{$B$}}&
                        \multicolumn{1}{c|}{}&
                        \multicolumn{1}{c|}{11}&
                        \multicolumn{1}{c|}{}&
                        \multicolumn{1}{c|}{}&
                        \multicolumn{1}{c|}{}&
                        \multicolumn{1}{c|}{7}&
                        \multicolumn{1}{c|}{6}&
                        \multicolumn{1}{c|}{}&
                        \multicolumn{1}{c|}{}&
                        \multicolumn{1}{c|}{}&
                        \multicolumn{1}{c|}{2}&
                        \multicolumn{1}{c|}{}
                        \\[1ex]
                        \multicolumn{1}{c||}{{$C$}}&
                        \multicolumn{1}{c|}{}&
                        \multicolumn{1}{c|}{}&
                        \multicolumn{1}{c|}{10}&
                        \multicolumn{1}{c|}{9}&
                        \multicolumn{1}{c|}{}&
                        \multicolumn{1}{c|}{}&
                        \multicolumn{1}{c|}{}&
                        \multicolumn{1}{c|}{5}&
                        \multicolumn{1}{c|}{}&
                        \multicolumn{1}{c|}{}&
                        \multicolumn{1}{c|}{}&
                        \multicolumn{1}{c|}{1}\\
                        \multicolumn{1}{c||}{{$\mb W$}}&$A$&$B$&$C$&$C$&$A$&$B$&$B$&$C$&$A$&$A$&$B$&$C$
                    \end{tabular}
                    \caption{An example of a triple of dice $\mb D=(D^{(1)},D^{(2)},D^{(3)})=(A,B,C)\in \mc D(4,4,4)$ and its representation as a $12$-letter word $\pi(\mb D)=\mb W=ABCCABBCAABC\in \mc W(4,4,4)$ in the alphabet $\mc A=\{A,B,C\}$.}
                    \label{fig: dado vs string}
            \end{figure}

Recall that $\mc D_{\btt}(n_1,\hdots, n_\ell)$ is the subset of $\mc D(n_1,\hdots, n_\ell)$ that consists of intransitive words, and denote by $\mc W_\btt(n_1,\hdots, n_\ell)$ the corresponding image of $\mc D_\btt(n_1,\hdots, n_\ell)$ by the bijection $\pi$. Given the word representation $\mb W$ of a collection of dice $\mb D$, it is also possible to compare which one of two dice $D^{(i)}$ and $D^{(j)}$ in $\mb D$ is stronger: one sums how many letters $D^{(i)}$ are to the right of every letter $D^{(j)}$. The result is how many possible victories $D^{(j)}$ has over $D^{(i)}$, and if this result is larger than half the total number of combinations $n_jn_i$, then $D^{(j)} \btt D^{(i)}$. In particular, repeating this process over consecutive letters in a given word $\mb W$, it is possible to determine whether it belongs to $\mc W_\btt(n_1,\hdots, n_\ell)$.

To illustrate this process in the dice from Figure~\ref{fig: dado vs string}, introduce some auxiliary labels in the letters $\bm A$ from $\mb W$ as
\begin{equation}\label{eq:exampleW}
\mb W=ABCCABBCAABC=A_1BCCA_2BBCA_3A_4BC.
\end{equation}
There are $4$ $B$'s to the right of $A_1$, $3$ $B$'s to the right of $A_2$, and $1$ $B$ to the right of each of $A_3$ and $A_4$. Thus, the number of victories of the die $A$ over $B$ is $4+3+1+1=9$. By symmetry, the number of victories of $B$ over $A$ is $16-9=7$, and in this case $A\btt B$.

Also, to compare which of two given dice $D^{(i)}$ and $D^{(j)}$ of a collection $\mb D$ is better, it suffices to know the sub-word in $\pi(\mb D)$ obtained when we remove all letters different from $D^{(i)}$ and $D^{(j)}$. For instance, in the example just explained we could have compared the dice $A$ and $B$ by looking solely at the sub-word $ABABBAAB$ obtained when we remove the $C$'s from $\mb W$ in \eqref{eq:exampleW}.

It is convenient to introduce the quantities
\begin{equation}\label{eq:deffNijD}
N_{i,j}(\mb D)\coloneqq \sum_{\substack{ k_1,k_2 \\ D_{k_1}^{(i)}>D^{(j)}_{k_2}}} 1, \quad \text{and its induced version on }\mb W,\text{ namely} \; N_{i,j}(\mb W)\coloneqq N_{i,j}(\pi^{-1}(\mb W)).
\end{equation}
In general, for $\mb W\in \mc W(n_1,\hdots,n_\ell)$ the numbers $N_{i,j}(\mb W)$ satisfy
\begin{equation}\label{eq:NijNjideterm}
N_{i,j}(\mb W)+N_{j,i}(\mb W)=n_in_j,
\end{equation}
and the statement $\mb D^{(i)}\btt \mb D^{(j)}$ is equivalent to saying that
\begin{equation}\label{eq:ineqNijintransitivity}
N_{i,j}(\mb W)\;>\;\frac{n_in_j}{2}.
\end{equation}
Furthermore, if $\mb W$ is any sub-word of $\widetilde{\mb W}$ obtained without removing two given letters $D^{(j)}$ and $D^{(k)}$, then
\begin{equation}\label{eq:Nijsubword}
N_{i,j}(\mb W)\;=\;N_{i,j}(\widetilde{\mb W})\,,
\end{equation}
which follows from the interpretation of $N_{i,j}(\cdot)$ as the number of $D^{(i)}$'s to the left of $D^{(j)}$'s in the given letter.

\subsection{Proof of Theorem~\ref{different_faces}}\hfill

We now focus on dice with the same number of faces $n$, that is, we fix $\ell$ and look at $\mc D_\ell(n)$ and $\mc D_{\btt, \ell}(n)$ from \eqref{eq:defDellbtt}, and their corresponding images
$$
\mc W_\ell(n)\coloneqq \pi(\mc D_\ell(n)) \quad \text{and}\quad \mc W_{\btt,\ell}(n)\coloneqq \pi(\mc D_{\btt,\ell}(n))\,.
$$

The proof of Theorem~\ref{different_faces} is based on the next result.
\begin{proposition}\label{prop:intrwords}
The following properties hold.
\begin{enumerate}[(i)]
\item For $\ell \geq 3$, the sets $\mc W_{\btt,\ell}(2)$ is empty.
\item The sets $\mc W_{\btt,3}(3)$ and $\mc W_{\btt,3}(4)$ are both non-empty.
\item If the set $\mc W_{\btt,\ell}(n)$ is non-empty, then both sets $\mc W_{\btt,\ell}(n+2)$ and $\mc W_{\btt,\ell+1}(n)$ are also non-empty.
\end{enumerate}
\end{proposition}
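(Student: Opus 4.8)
The plan is to establish each of the three items separately, using the word representation throughout and freely appealing to the comparison criterion \eqref{eq:ineqNijintransitivity} together with the sub-word invariance \eqref{eq:Nijsubword}.

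For item (i), I would argue directly. A word in $\mc W_\ell(2)$ has each of the $\ell\geq 3$ letters appearing exactly twice. Consider the two letters whose first occurrences are the earliest in the word; more usefully, look at the leftmost letter $D^{(k)}$ of the word. For the chain $D^{(1)}\btt\cdots\btt D^{(\ell)}\btt D^{(1)}$ to hold we need $N_{k-1,k}(\mb W)>2$, i.e. $N_{k-1,k}(\mb W)\geq 3$; but since $D^{(k-1)}$ occurs only twice, $N_{k-1,k}(\mb W)\le 4$, and a short case analysis on the relative order of the two copies of $D^{(k-1)}$ and the two copies of $D^{(k)}$ in the sub-word (there are only $\binom{4}{2}=6$ such patterns) shows $N_{k-1,k}=3$ forces a specific interleaving that, when propagated around the cycle, is contradictory. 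Concretely, $N_{i,i+1}(\mb W)\ge 3$ for every $i$ means in each two-letter sub-word the letter $D^{(i)}$ is ``mostly to the left'' of $D^{(i+1)}$; summing the cyclic inequalities $N_{i,i+1}>2$ over all $i$ and using $N_{i,i+1}+N_{i+1,i}=4$ gives $\sum_i N_{i,i+1}>2\ell$ while a parity/averaging argument on the positions of letters forces $\sum_i N_{i,i+1}\le 2\ell$, a contradiction. I would write this out via the position function: if $p^{(k)}_1<p^{(k)}_2$ are the positions of the two copies of letter $k$, then $N_{i,j}$ is determined by these, and the cyclic sum telescopes.

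For item (ii), I would simply exhibit explicit words. For $\mc W_{\btt,3}(3)$ one can check a candidate such as $ABCCABBCA$ (or the one obtained by removing a letter-pair from the Figure~\ref{fig: dado vs string} example); one computes $N_{A,B}$, $N_{B,C}$, $N_{C,A}$ directly from the word by counting, for each copy of the left letter, how many copies of the right letter lie to its right, and verifies each exceeds $9/2$, i.e. is $\ge 5$. For $\mc W_{\btt,3}(4)$ the word $ABCCABBCAABC=\pi(\mb D)$ from Figure~\ref{fig: dado vs string} already works: the text computes $N_{A,B}=9>8$, and one checks $N_{B,C}>8$ and $N_{C,A}>8$ analogously. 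This item is purely a verification and is the easy part.

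For item (iii), the heart of the matter, I would give two concatenation/insertion constructions. For passing from $\mc W_{\btt,\ell}(n)$ to $\mc W_{\btt,\ell}(n+2)$: given an intransitive word $\mb W$, produce $\widetilde{\mb W}$ by prepending the block $D^{(1)}D^{(2)}\cdots D^{(\ell)}$ and appending the block $D^{(\ell)}D^{(\ell-1)}\cdots D^{(1)}$ (so each letter gains two occurrences). Using \eqref{eq:Nijsubword} applied to the sub-word on letters $D^{(i)},D^{(i+1)}$, the new count is $N_{i,i+1}(\widetilde{\mb W})=N_{i,i+1}(\mb W)+\delta$ where $\delta$ is the contribution of the added copies; with the prepended block ordered $D^{(1)}\cdots D^{(\ell)}$ the copy of $D^{(i)}$ sits left of the copy of $D^{(i+1)}$, contributing favorably, and with the appended block in reverse order $D^{(\ell)}\cdots D^{(1)}$ the copy of $D^{(i)}$ again sits left of that of $D^{(i+1)}$; a direct tally shows each $N_{i,i+1}$ increases by strictly more than $\tfrac12\big((n+2)^2-n^2\big)=2n+2$, so \eqref{eq:ineqNijintransitivity} is preserved (one may need to choose the two added blocks with a little care, e.g. both in the ``strong'' cyclic orientation, and check the boundary pair $(\ell,1)$ separately using the reversed block). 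For passing from $\mc W_{\btt,\ell}(n)$ to $\mc W_{\btt,\ell+1}(n)$: introduce a brand-new letter $D^{(\ell+1)}$, insert its $n$ copies into $\mb W$ at a single location — say immediately after all copies of $D^{(\ell)}$ but before where they would lose too much — chosen so that $D^{(\ell+1)}$ sits (weakly) to the left of most copies of $D^{(1)}$ and to the right of most copies of $D^{(\ell)}$; then $N_{\ell,\ell+1}$ and $N_{\ell+1,1}$ are controlled directly, while all old counts $N_{i,i+1}$ for $i\le \ell-1$ are unchanged by \eqref{eq:Nijsubword}, and the old edge $D^{(\ell)}\btt D^{(1)}$ is no longer required in the longer cycle. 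The main obstacle is getting the arithmetic of the insertion point right so that simultaneously $N_{\ell,\ell+1}>n^2/2$ and $N_{\ell+1,1}>n^2/2$: placing the block of new letters so that exactly more than half of the $D^{(\ell)}$'s are to its left and more than half of the $D^{(1)}$'s are to its right; since in an intransitive $\mb W$ the letters $D^{(\ell)}$ and $D^{(1)}$ already satisfy $N_{\ell,1}(\mb W)>n^2/2$, their copies are ``spread'' in a way that makes such a slot exist — I expect to locate it by a discrete intermediate-value argument on the partial counts as the insertion point sweeps across $\mb W$. This insertion-point existence argument is the step I expect to require the most care.
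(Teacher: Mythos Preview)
Your proposal has several genuine gaps that need attention.

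\textbf{Item (i).} The averaging argument you sketch is incorrect. You claim that $\sum_i N_{i,i+1}\le 2\ell$ via a ``parity/averaging argument'', but this inequality is false: for $\ell=3$, $n=2$ and the word $AABBCC$ one has $N_{1,2}=N_{2,3}=4$, $N_{3,1}=0$, giving $\sum_i N_{i,i+1}=8>6=2\ell$. The identity $N_{i,i+1}+N_{i+1,i}=4$ does not relate $\sum_i N_{i,i+1}$ to $\sum_i N_{i+1,i}$, since these sums involve different pairs of letters. The paper instead observes that $N_{j,k}\ge 3$ forces the two-letter sub-word to be one of $D^{(j)}D^{(j)}D^{(k)}D^{(k)}$ or $D^{(j)}D^{(k)}D^{(j)}D^{(k)}$, so in either case a copy of $D^{(j)}$ precedes \emph{both} copies of $D^{(k)}$; propagating this along the chain shows a copy of $D^{(1)}$ precedes both copies of $D^{(\ell)}$, which is incompatible with $D^{(\ell)}\btt D^{(1)}$.

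\textbf{Item (ii).} The word $ABCCABBCAABC$ from Figure~\ref{fig: dado vs string} is \emph{not} in $\mc W_{\btt,3}(4)$: counting gives $N_{C,A}=3+3+2+0=8$, which fails the strict inequality $N_{C,A}>8$. The paper exhibits the different word $ABCBCCAABABC$ for $n=4$.

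\textbf{Item (iii), passage $\ell\to\ell+1$.} Your single-block insertion has a real gap. Inserting all $n$ copies of $D^{(\ell+1)}$ at one position $p$ gives $N_{\ell,\ell+1}=n\cdot L(p)$ and $N_{\ell+1,1}=n\cdot R(p)$, where $L(p)$ and $R(p)$ count copies of $D^{(\ell)}$ before $p$ and of $D^{(1)}$ after $p$. You need $L(p)>n/2$ and $R(p)>n/2$ simultaneously. But the intransitivity condition $N_{\ell,1}>n^2/2$ does not guarantee such a $p$ exists: for $n=4$, the $D^{(\ell)},D^{(1)}$ sub-word $D^{(\ell)}D^{(1)}D^{(\ell)}D^{(1)}D^{(\ell)}D^{(\ell)}D^{(1)}D^{(1)}$ has $N_{\ell,1}=11>8$, yet $L(p)\ge 3$ only for $p\ge 5$ while $R(p)\ge 3$ only for $p\le 3$, so no valid insertion point exists. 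Your intermediate-value intuition fails because $L$ and $R$ are not complementary. The paper avoids this entirely with a much simpler construction: replace every occurrence of $D^{(\ell)}$ by the pair $D^{(\ell)}D^{(\ell+1)}$. Then all old sub-words on consecutive letters are unchanged, $N_{\ell,\ell+1}$ is trivially large since each $D^{(\ell+1)}$ sits immediately right of a $D^{(\ell)}$, and $N_{\ell+1,1}(\widetilde{\mb W})=N_{\ell,1}(\mb W)$ since the $D^{(\ell+1)}$'s occupy the same relative positions with respect to the $D^{(1)}$'s as the $D^{(\ell)}$'s did.

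\textbf{Item (iii), passage $n\to n+2$.} Your sandwich $\mb S\,\mb W\,\mb S^*$ with $\mb S=D^{(1)}\cdots D^{(\ell)}$ is correct and essentially equivalent to the paper's prepending of the neutral word $\mb S\mb S^*$. A small slip: the increment in $N_{i,i+1}$ is exactly $2n+2$, not strictly more, but this still suffices since the strict inequality $N_{i,i+1}(\mb W)>n^2/2$ carries over.
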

\begin{proof}
To prove (i), let $\mb W\in \mc W_\ell(2)$ for which $D^{(1)}\btt D^{(2)}\btt \hdots \btt D^{(\ell)}$. In this case, we have that
$$
N_{j,k}(\mb W)+N_{k,j}(\mb W)=4\quad \text{for any } j\neq k,
$$
so in this case $N_{j,k}\geq 3$ whenever $D^{(j)}\btt D^{(k)}$. We learned the following: any sub-word of $\mb W$ in two different letters $D^{(j)}$ and $D^{(k)}$ for which $D^{(j)}\btt D^{(k)}$ has to be either one of the following two words
\begin{equation}\label{eq:canonicalwords22}
D^{(j)} D^{(j)} D^{(k)} D^{(k)} \quad \text{or}\quad D^{(j)} D^{(k)} D^{(j)} D^{(k)}.
\end{equation}
Thus, in $\mb W$ there is always a $D^{(1)}$ to the left of the two occurrences of $D^{(2)}$, there is always a $D^{(2)}$ to the left of the two ocurrences of $D^{(3)}$ etc. Consequently, there is always a $D^{(1)}$ to the left of the two occurrences of $D^{(k)}$, for any $k\geq 1$. Hence, the sub-word of $\mb W$ in $D^{(1)}$ and $D^{(\ell)}$ cannot be of the form \eqref{eq:canonicalwords22} with $j=\ell$ and $k=1$, so the relation $D^{(\ell)}\btt D^{(1)}$ is not verified in~$\mb W$.

For (ii), examples of words in $\mc W_{\btt,3}(3)$ and $\mc W_{\btt,3}(4)$, and their corresponding dice, are displayed in Figure~\ref{fig:33intrdice}. The proof of part (iii) is postponed to Section~\ref{sec:proofpropintrwords}.
\end{proof}
\begin{figure}[!htb]
\centering
\begin{tikzpicture}{scale=0.75}
\begin{scope}[yshift = 0cm]
\dado{A}{3}{{9, 5, 1}}{(-4, 0)}{-5/4}{{}}
\dado{B}{3}{{8, 4, 3}}{(0, 0)}{-5/4}{{}}
\dado{C}{3}{{7, 6, 2}}{(4, 0)}{-5/4}{{}}
\end{scope}
\begin{scope}[yshift = -3.5cm]
\dado{A}{4}{{12, 6, 5, 3}}{(-4, 0)}{-6/4}{{}}
\dado{B}{4}{{11, 9, 4, 2}}{(0, 0)}{-6/4}{{}}
\dado{C}{4}{{10, 8, 7, 1}}{(4, 0)}{-6/4}{{}}
\end{scope}
\end{tikzpicture}
\caption{On top: a collection of three $3$-sided dice corresponding to the word $\mb W=ABCCABBCA\in\mc W_{\btt,3}(3)$. On bottom: a collection of three $4$-sided dice corresponding to the word $\mb W=ABCBCCAABABC\in\mc W_{\btt,3}(4)$}\label{fig:33intrdice}
\end{figure}
With Proposition~\ref{prop:intrwords} at hand, we are ready to prove Theorem~\ref{different_faces}.
\begin{proof}[Proof of Theorem~\ref{different_faces}]
From the definition of $\mc D_{\btt, \ell}(n)$ it immediately follows that Theorem~\ref{different_faces} is equivalent to the following two claims:
\begin{enumerate}[(1)]
\item For any $\ell\geq 3$, the set $\mc D_{\btt, \ell}(2)$ is empty.
\item For any $n\geq 3$ and $\ell\geq 3$, the set $\mc D_{\btt, \ell}(n)$ is non-empty.
\end{enumerate}

The first one follows from the definition of the bijection $\pi$ and Proposition~\ref{prop:intrwords}--(i). In turn, claim (2) follows by applying Proposition~\ref{prop:intrwords}--(iii) inductively, having in mind that $\mc D_{\btt,3}(3)$ and $\mc D_{\btt, 4}(3)$ are both non-empty by Proposition~\ref{prop:intrwords}--(ii).
\end{proof}

\subsection{Proof of Proposition~\ref{prop:intrwords}--(iii)}\label{sec:proofpropintrwords}\hfill

To prove Proposition~\ref{prop:intrwords} we need some additional notions and lemmas. For what comes next, we recall that the numbers $N_{i,j}(\mb W)$ were introduced in~\eqref{eq:deffNijD}.

The \textbf{dual word} $\mb W^*\in \mc W(n_1,\hdots,n_\ell)$ is obtaining reversing the ordering of the letters in a given word $\mb W\in \mc W(n_1,\hdots,n_\ell)$; in the Example~\eqref{eq:exampleW} the result is
$$
\mb W^*\;=\;CBAACBBACCBA\,.
$$

\begin{lemma} \label{Lemma: nac > n*ca}
Let $\mb W\in \mc W_\ell(n)$ and $\mb W^*$ its dual word. Then
$$
N_{i,j}(\mb W)\;=\;N_{j,i}(\mb W^*)\,.
$$
\end{lemma}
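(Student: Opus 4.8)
The plan is to unpack both sides of the claimed identity using the combinatorial interpretation of $N_{i,j}$ as a count of ordered pairs of faces, and observe that reversing the word is precisely the involution that swaps the two faces in each such pair while keeping track of the die-labels. Concretely, recall from \eqref{eq:deffNijD} that $N_{i,j}(\mb W) = \sum_{k_1,k_2} \ind_{D^{(i)}_{k_1} > D^{(j)}_{k_2}}$, where $\mb D = \pi^{-1}(\mb W)$, and that by the bijection $\pi$ the value on a face corresponds (in reversed order) to the position of the associated letter in $\mb W$: the number $n\ell - p + 1$ sits on a face of $D^{(k)}$ exactly when $W_p = D^{(k)}$. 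So $N_{i,j}(\mb W)$ equals the number of pairs of positions $(p,p')$ in $\mb W$ with $W_p = D^{(i)}$, $W_{p'} = D^{(j)}$, and $p < p'$ (an $D^{(i)}$-letter strictly to the left of a $D^{(j)}$-letter), as was already noted in the discussion around \eqref{eq:Nijsubword}.

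First I would make that positional description precise: for any $\mb W \in \mc W_\ell(n)$,
\begin{equation*}
N_{i,j}(\mb W) \;=\; \#\bigl\{(p,p') : 1 \le p < p' \le n\ell,\ W_p = D^{(i)},\ W_{p'} = D^{(j)} \bigr\}.
\end{equation*}
Then I would describe the dual $\mb W^*$ by its letters: $W^*_p = W_{n\ell + 1 - p}$. Applying the positional formula to $\mb W^*$ and substituting $q = n\ell+1-p$, $q' = n\ell+1-p'$, a pair $p<p'$ with $W^*_p = D^{(j)}$, $W^*_{p'} = D^{(i)}$ corresponds bijectively to a pair $q' < q$ with $W_q = D^{(j)}$, $W_{q'} = D^{(i)}$, i.e. an $D^{(i)}$-letter strictly to the left of a $D^{(j)}$-letter in $\mb W$. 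Hence $N_{j,i}(\mb W^*) = N_{i,j}(\mb W)$, which is the claim. Since there are no ties among faces (all entries distinct), every pair is counted exactly once with a strict inequality, so no boundary correction is needed.

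I do not anticipate a genuine obstacle here; the only thing requiring mild care is bookkeeping the index reversal coming from the bijection $\pi$ (position $p$ in the word $\leftrightarrow$ face value $n\ell - p + 1$) so that the left/right relation in $\mb W$ translates correctly into the $>$ relation among face values, and then checking that the reversal of the word swaps left with right. An alternative, even shorter, route would be to note $N_{i,j}(\mb W) + N_{i,j}(\mb W^*)$ counts every ordered pair $(p,p')$ of an $D^{(i)}$-position and a distinct $D^{(j)}$-position exactly once for each of the two orders, hence equals $n_i n_j = n^2$; combining with $N_{i,j}(\mb W^*) + N_{j,i}(\mb W^*) = n^2$ from \eqref{eq:NijNjideterm} gives $N_{i,j}(\mb W) = N_{j,i}(\mb W^*)$ immediately. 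I would present the direct positional bijection as the main argument and perhaps mention this second identity-based shortcut as a remark.
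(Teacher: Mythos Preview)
Your argument is correct and follows the same approach as the paper: both use the positional interpretation of $N_{i,j}(\mb W)$ as the number of $D^{(i)}$-letters to the left of $D^{(j)}$-letters, and observe that reversing the word swaps left with right. The paper's proof is simply a one-sentence version of your more detailed index-substitution argument.
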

\begin{proof}
The number $N_{i,j}(\mb W)$ counts the number of times the letter $D^{(i)}$ appears to the left of each $D^{(j)}$ in $\mb W$. The dual letter $\mb W^*$ is obtained from $\mb W$ by reading the letters from $\mb W$ in the backwards manner, interpretation from which the lemma follows.
\end{proof}

We say a collection of dice $\mb D\in \mc D(n_1,\hdots,n_\ell)$, or its corresponding letter $\mb W=\pi(\mb D)$, is {\bf neutral} if any given die in $\mb D$ beats any other given die in $\mb D$ the same amount of times. In terms of $N_{i,j}(\mb W)$ this is equivalent to verifying that
\begin{equation}\label{eq:Nijneutral}
N_{i,j}(\mb W)\;=\;N_{j,i}(\mb W)\,.
\end{equation}
From this relation and \eqref{eq:NijNjideterm} it follows that if $\mb W\in \mc W_\ell(n)$ is neutral, then $n$ must be even.

We also talk about the concatenation of two words $\mb W_1$ and $\mb W_2$ into a new word $\mb W=\mb W_1\mb W_2$; in the example \eqref{eq:exampleW}, for instance, we can write $\mb W=\mb W_1\mb W_2$ with $\mb W_1=ABCCA$ and $\mb W_2=BBCAABC$.

When dealing with concatenations, the involved words need not be in the same letters, neither need they have the same size. However, when $\mb W_1\in \mc W_\ell(n_1)$ and $\mb W_2\in \mc W_\ell(n_2)$, then obviously $\mb W_1\mb W_2\in \mc W_\ell(n_1+n_2)$ and the identity
\begin{equation}\label{eq:Nijconcat}
N_{i,j}(\mb W_1\mb W_2)\;=\;N_{i,j}(\mb W_1)+n_1n_2+N_{i,j}(\mb W_2)
\end{equation}
holds true.

\begin{lemma}\label{lemma: symmetric neutral}
Given any word $\mb W\in \mc W_{\ell}(n)$, the concatenation $\widetilde{\mb W}=\mb W\mb W^* \in \mc W_{\ell}(2n)$ is neutral.
\end{lemma}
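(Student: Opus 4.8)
The plan is to compute $N_{i,j}(\widetilde{\mb W})$ directly using the concatenation formula \eqref{eq:Nijconcat} together with the duality relation from Lemma~\ref{Lemma: nac > n*ca}, and then to verify that the neutrality condition \eqref{eq:Nijneutral} holds. Since $\mb W\in\mc W_\ell(n)$ has each letter appearing $n$ times, and $\mb W^*\in\mc W_\ell(n)$ likewise, the concatenation $\widetilde{\mb W}=\mb W\mb W^*$ indeed lies in $\mc W_\ell(2n)$, so the statement is well-posed.

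First I would apply \eqref{eq:Nijconcat} with $\mb W_1=\mb W$ and $\mb W_2=\mb W^*$, both having $n$ occurrences of each letter, so $n_1=n_2=n$ and hence
\begin{equation*}
N_{i,j}(\widetilde{\mb W})\;=\;N_{i,j}(\mb W)+n^2+N_{i,j}(\mb W^*)\,.
\end{equation*}
Next I would invoke Lemma~\ref{Lemma: nac > n*ca}, which gives $N_{i,j}(\mb W^*)=N_{j,i}(\mb W)$. Substituting, one obtains
\begin{equation*}
N_{i,j}(\widetilde{\mb W})\;=\;N_{i,j}(\mb W)+N_{j,i}(\mb W)+n^2\,,
\end{equation*}
and the right-hand side is manifestly symmetric under the exchange $i\leftrightarrow j$. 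Therefore $N_{i,j}(\widetilde{\mb W})=N_{j,i}(\widetilde{\mb W})$, which is precisely the neutrality condition \eqref{eq:Nijneutral}. (As a sanity check, by \eqref{eq:NijNjideterm} we have $N_{i,j}(\mb W)+N_{j,i}(\mb W)=n^2$, so in fact $N_{i,j}(\widetilde{\mb W})=2n^2=\tfrac12(2n)^2$, consistent with the neutrality of a word of size $2n$ in each letter.)

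There is no real obstacle here: the argument is a two-line substitution once the concatenation identity \eqref{eq:Nijconcat} and the duality Lemma~\ref{Lemma: nac > n*ca} are available. The only point requiring a modicum of care is to make sure the hypotheses of \eqref{eq:Nijconcat} are met — namely that both $\mb W$ and $\mb W^*$ belong to $\mc W_\ell(n)$ with the same letter counts — which is immediate since reversing a word does not change how many times each letter occurs. Thus the proof is essentially a bookkeeping computation, and the conclusion follows.
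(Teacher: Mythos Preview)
Your proof is correct and follows essentially the same approach as the paper: apply the concatenation identity \eqref{eq:Nijconcat} and then Lemma~\ref{Lemma: nac > n*ca} to obtain a symmetric expression in $i,j$. The paper's proof is just a terser version of exactly this computation.
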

\begin{proof}
Applying Lemma~\ref{Lemma: nac > n*ca} to \eqref{eq:Nijconcat}, we obtain that $N_{i,j}(\mb W)=N_{j,i}(\mb W^*)$ for any $i\neq j$. The proof is then completed using \eqref{eq:Nijneutral}.
\end{proof}

We are ready to start applying recursive arguments that preserve intransitive words, starting with adding a new letter to a word known to be transitive.

\begin{lemma}\label{lem:WellWellplus}
If $\mc W_{\btt,\ell}(n)$ is non-empty, then $\mc W_{\btt,\ell+1}(n)$ is non-empty.
\end{lemma}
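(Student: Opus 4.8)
The plan is to build a word in $\mc W_{\btt,\ell+1}(n)$ directly from a given $\mb W\in\mc W_{\btt,\ell}(n)$ by inserting $n$ copies of a brand new letter $D^{(\ell+1)}$ into $\mb W$, arranged so that $D^{(\ell+1)}$ slides into the cycle between $D^{(\ell)}$ and $D^{(1)}$, that is, so that $D^{(\ell)}\btt D^{(\ell+1)}\btt D^{(1)}$, while no comparison among $D^{(1)},\dots,D^{(\ell)}$ is altered. Note that in the enlarged collection the intransitive cycle reads $D^{(1)}\btt\cdots\btt D^{(\ell)}\btt D^{(\ell+1)}\btt D^{(1)}$, so we no longer need $D^{(\ell)}\btt D^{(1)}$ to hold; we only need the new letter to lose to $D^{(\ell)}$ and to beat $D^{(1)}$.

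Concretely, I would list the $n$ occurrences of $D^{(\ell)}$ in $\mb W$ from left to right and let $\mb W'$ be the word obtained by inserting one copy of $D^{(\ell+1)}$ immediately to the right of each of them. Then $\mb W'\in\mc W_{\ell+1}(n)$. Deleting every $D^{(\ell+1)}$ from $\mb W'$ recovers $\mb W$ and removes no $D^{(i)}$ or $D^{(i+1)}$, so \eqref{eq:Nijsubword} yields $N_{i,i+1}(\mb W')=N_{i,i+1}(\mb W)>n^2/2$ for $1\le i\le\ell-1$, and the old relations survive by \eqref{eq:ineqNijintransitivity}. For the new comparisons, observe that the $k$-th inserted letter (the one placed right after the $k$-th occurrence of $D^{(\ell)}$) has exactly $k$ copies of $D^{(\ell)}$ to its left, so $N_{\ell,\ell+1}(\mb W')=\sum_{k=1}^{n}k=n(n+1)/2>n^2/2$, giving $D^{(\ell)}\btt D^{(\ell+1)}$.

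It remains to verify $D^{(\ell+1)}\btt D^{(1)}$. Let $\beta_k$ denote the number of copies of $D^{(1)}$ to the left of the $k$-th occurrence of $D^{(\ell)}$ in $\mb W$; since nothing sits between that occurrence and the copy of $D^{(\ell+1)}$ inserted right after it, the $k$-th inserted letter also has exactly $\beta_k$ copies of $D^{(1)}$ to its left, hence $N_{\ell+1,1}(\mb W')=\sum_{k=1}^n(n-\beta_k)=n^2-\sum_{k=1}^n\beta_k$. Now $\sum_{k=1}^n\beta_k=N_{1,\ell}(\mb W)=n^2-N_{\ell,1}(\mb W)$ by \eqref{eq:NijNjideterm}, and $N_{\ell,1}(\mb W)>n^2/2$ because $D^{(\ell)}\btt D^{(1)}$ in the original intransitive word; therefore $N_{\ell+1,1}(\mb W')=N_{\ell,1}(\mb W)>n^2/2$ and $D^{(\ell+1)}\btt D^{(1)}$. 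Combining the three points, $D^{(1)}\btt\cdots\btt D^{(\ell)}\btt D^{(\ell+1)}\btt D^{(1)}$, so $\mb W'\in\mc W_{\btt,\ell+1}(n)$, which is therefore non-empty. There is no genuine obstacle here: the only idea is the insertion rule — placing each new letter immediately after a copy of $D^{(\ell)}$, so that with respect to $D^{(1)}$ the new letter mimics $D^{(\ell)}$ exactly, yet with respect to $D^{(\ell)}$ it is always strictly to its right — and everything else is elementary counting with the $N_{i,j}$'s; one could equally well insert each new copy immediately to the left of a copy of $D^{(1)}$.
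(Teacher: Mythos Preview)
Your proof is correct and follows essentially the same approach as the paper: both insert a copy of the new letter $D^{(\ell+1)}$ immediately after each occurrence of $D^{(\ell)}$ (equivalently, replace $D^{(\ell)}$ by $D^{(\ell)}D^{(\ell+1)}$), use \eqref{eq:Nijsubword} to preserve the old relations, and verify the identity $N_{\ell+1,1}(\mb W')=N_{\ell,1}(\mb W)$ together with $N_{\ell,\ell+1}(\mb W')>n^2/2$. Your write-up is in fact a bit more explicit in the counting than the paper's, but the construction and the key observations are identical.
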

\begin{proof}
From a given $\mb W\in \mc W_{\ell}(n)$, create a new word $\widetilde{\mb W}\in \mc W_{\ell+1}(n)$ obtained by replacing every occurrence of $D^{(\ell)}$ by $D^{(\ell)}D^{(\ell+1)}$. In the example $\mb W\in \mc W_{3}(4)$ from \eqref{eq:exampleW}, the new word $\widetilde{\mb W}\in \mc W_{4}(4)$ is
$$
\widetilde{\mb W}\;=\;ABCDCDABBCDAABCD.
$$
In virtue of \eqref{eq:Nijsubword}, we see that the relation $D^{(k)}\btt D^{(k+1)}$ for $k=1,\hdots, n-1$ is preserved when going from a letter $\mb W\in \mc W_{\btt,\ell}(n)$ to the corresponding letter $\widetilde {\mb W}\in \mc W_{\ell+1}(n)$. Still by construction, the relations
$$
N_{\ell,\ell+1}(\widetilde{\mb W})\;>\;N_{\ell+1,\ell}(\widetilde {\mb W}) \quad \text{and} \quad
N_{\ell,1}(\mb W)\;=\;N_{\ell+1,1}(\widetilde {\mb W})
$$
are of straightforward verification, since each letter $\mb D^{(\ell)}$ appears immediately to the right of a letter $D^{(\ell)}$ in $\widetilde {\mb W}$. When $\mb W\in \mc W_{\btt,\ell}(n)$, the inequality above shows that $D^{(\ell)}\btt D^{(\ell+1)}$ in $\widetilde{\mb W}$, and the equality above shows that the relation $D^{(\ell)}\btt D^{(1)}$ in $\mb W$ transfers to the relation $D^{(\ell+1)}\btt D^{(1)}$ in $\widetilde {\mb W}$.
\end{proof}

Adding new faces whilst preserving intransitivity is a little bit more involved, and will be based on the next two lemmas.

\begin{lemma}\label{neutral string}
Fix $k\geq 1$, and suppose that $\mb I \in\mc W_{\ell}(2k)$ is a neutral word. Let $\mb W\in \mc W_\ell(n)$. Then $\mb W\in \mc W_{\btt,\ell}(n)$ if, and only if, $\mb I\mb W\in \mc W_{\btt, \ell}(n+2k)$.
\end{lemma}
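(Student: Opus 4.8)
The plan is to work entirely on the level of words via the bijection $\pi$, and to exploit the concatenation formula \eqref{eq:Nijconcat} together with neutrality \eqref{eq:Nijneutral} of $\mb I$. Recall that for a word $\mb W\in\mc W_\ell(n)$, membership in $\mc W_{\btt,\ell}(n)$ is the conjunction, over consecutive pairs $(k,k+1)$ (cyclically, with $D^{(\ell+1)}=D^{(1)}$), of the strict inequalities $N_{k,k+1}(\mb W)>n^2/2$, by \eqref{eq:ineqNijintransitivity}. So the claim reduces to showing that for each such pair, $N_{k,k+1}(\mb W)>n^2/2$ holds if and only if $N_{k,k+1}(\mb I\mb W)>(n+2k)^2/2$. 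Wait — I should be careful with the letter $k$: the lemma uses $k$ both as the half-size of $\mb I$ and implicitly one would index dice by another letter; I would index consecutive dice pairs by $j$, writing the conditions as $N_{j,j+1}(\cdot)>(\cdot)^2/2$ for $j=1,\dots,\ell$ cyclically.

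First I would apply \eqref{eq:Nijconcat} with $\mb W_1=\mb I\in\mc W_\ell(2k)$ and $\mb W_2=\mb W\in\mc W_\ell(n)$, obtaining
\begin{equation*}
N_{i,j}(\mb I\mb W)\;=\;N_{i,j}(\mb I)+2kn+N_{i,j}(\mb W)
\end{equation*}
for every $i\neq j$. Since $\mb I$ is neutral, \eqref{eq:Nijneutral} gives $N_{i,j}(\mb I)=N_{j,i}(\mb I)$, and combined with \eqref{eq:NijNjideterm} applied to $\mb I\in\mc W_\ell(2k)$ this forces $N_{i,j}(\mb I)=\tfrac12(2k)^2=2k^2$. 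Hence
\begin{equation*}
N_{i,j}(\mb I\mb W)\;=\;2k^2+2kn+N_{i,j}(\mb W)\,.
\end{equation*}
Now I compare with the threshold: $\mb I\mb W\in\mc W_\ell(n+2k)$, so the intransitivity condition for the pair $(j,j+1)$ is $N_{j,j+1}(\mb I\mb W)>\tfrac12(n+2k)^2=\tfrac12 n^2+2kn+2k^2$. Subtracting the common terms $2kn+2k^2$ from both sides of the displayed identity shows this is equivalent to $N_{j,j+1}(\mb W)>\tfrac12 n^2$, which is exactly the intransitivity condition for $\mb W\in\mc W_\ell(n)$. Since this equivalence holds simultaneously for all $j=1,\dots,\ell$, we conclude $\mb W\in\mc W_{\btt,\ell}(n)\iff \mb I\mb W\in\mc W_{\btt,\ell}(n+2k)$.

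There is essentially no hard obstacle here; the only point requiring a little care is the bookkeeping of the thresholds — making sure the shift $2kn+2k^2$ coming from the concatenation identity and the neutral contribution of $\mb I$ exactly matches the change in the threshold $\tfrac12(n+2k)^2-\tfrac12 n^2$. That these coincide is what makes the equivalence clean, and it is the crux of why a neutral prefix can be freely prepended without affecting intransitivity. I would also remark that the same computation shows $N_{i,j}(\mb I\mb W)-N_{j,i}(\mb I\mb W)=N_{i,j}(\mb W)-N_{j,i}(\mb W)$, i.e.\ prepending a neutral word preserves not just the intransitive cycle but the entire ``strength difference'' profile between every pair of dice, which is perhaps worth stating for later use.
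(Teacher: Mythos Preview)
Your proof is correct and follows essentially the same approach as the paper: apply the concatenation identity \eqref{eq:Nijconcat} and use neutrality of $\mb I$ to reduce the intransitivity inequalities for $\mb I\mb W$ to those for $\mb W$. The only cosmetic difference is that the paper phrases the last step via the preserved difference $N_{j,j+1}-N_{j+1,j}$, whereas you compute $N_{i,j}(\mb I)=2k^2$ explicitly and match thresholds; these are equivalent, and your closing remark in fact recovers the paper's formulation.
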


\begin{proof}
From \eqref{eq:Nijconcat} we learn that for any $i\neq j$,
$$
N_{i,j}(\mb I\mb W)\;=\;N_{i,j}(\mb I)+2kn+N_{i,j}(\mb W)\,.
$$
Using the symmetry \eqref{eq:Nijneutral} for the neutral word $\mb I$, we obtain
$$
N_{k,k+1}(\mb I\mb W)-N_{k+1,k}(\mb I\mb W)\;=\;
N_{k,k+1}({\mb W})-N_{k+1,k}({\mb W})\,,\quad k=1,\hdots, \ell-1.
$$
The result then follows from \eqref{eq:ineqNijintransitivity}.            \end{proof}

\begin{lemma}\label{lem:WnWnplusplus}
If $\mc W_\ell(n)$ is non-empty, then $\mc W_{\ell}(n+2)$ is non-empty.
\end{lemma}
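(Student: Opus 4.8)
The plan is to derive this from Lemmas~\ref{lemma: symmetric neutral} and~\ref{neutral string} by prepending a short neutral block to an already intransitive word, so that this lemma is exactly the ``$n\to n+2$'' half of Proposition~\ref{prop:intrwords}--(iii). First I would produce a neutral word of size~$2$ over the alphabet $\mc A=\{D^{(1)},\dots,D^{(\ell)}\}$: starting from the length-$\ell$ word $\mb V\coloneqq D^{(1)}D^{(2)}\cdots D^{(\ell)}\in\mc W_\ell(1)$, in which each letter occurs exactly once, set
\[
\mb I\;\coloneqq\;\mb V\,\mb V^{*}\;=\;D^{(1)}D^{(2)}\cdots D^{(\ell)}D^{(\ell)}\cdots D^{(2)}D^{(1)}\;\in\;\mc W_\ell(2).
\]
By Lemma~\ref{lemma: symmetric neutral}, $\mb I$ is neutral.

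Then, assuming $\mc W_{\btt,\ell}(n)$ is non-empty, I would take any $\mb W\in\mc W_{\btt,\ell}(n)$ and form the concatenation $\mb I\mb W\in\mc W_\ell(n+2)$. Applying Lemma~\ref{neutral string} with $k=1$ and the neutral word $\mb I$ just built, the membership $\mb W\in\mc W_{\btt,\ell}(n)$ is equivalent to $\mb I\mb W\in\mc W_{\btt,\ell}(n+2)$; hence $\mc W_{\btt,\ell}(n+2)$ contains $\mb I\mb W$, and in particular it is non-empty. This finishes the argument.

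I do not expect any real obstacle here: the only point needing care is the existence of a neutral word whose number of faces is exactly~$2$ and whose alphabet has the prescribed $\ell$ letters, and the $\mb V\mb V^{*}$ construction of Lemma~\ref{lemma: symmetric neutral} supplies precisely that (it automatically produces an even size, which is the reason the number of faces increases by~$2$ rather than~$1$). Everything else reduces to the additivity identity~\eqref{eq:Nijconcat} together with the symmetry~\eqref{eq:Nijneutral} of neutral words, both already exploited in the proof of Lemma~\ref{neutral string}. Combining this lemma with Lemma~\ref{lem:WellWellplus} then completes the proof of Proposition~\ref{prop:intrwords}--(iii), and hence of the existence claim in Theorem~\ref{different_faces}.
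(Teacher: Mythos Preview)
Your proposal is correct and follows essentially the same approach as the paper: construct the neutral word $\mb I=\mb V\mb V^*\in\mc W_\ell(2)$ from $\mb V=D^{(1)}\cdots D^{(\ell)}$ via Lemma~\ref{lemma: symmetric neutral}, then apply Lemma~\ref{neutral string} to any $\mb W\in\mc W_{\btt,\ell}(n)$. You have also correctly read the intended meaning of the statement as concerning $\mc W_{\btt,\ell}$ rather than $\mc W_\ell$.
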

\begin{proof}
By Lemma~\ref{lemma: symmetric neutral}, the word $\mb I=\mb S\mb S^*\in \mc W_{\ell}(2)$ constructed from the choice
$$
\mb S\;=\;D^{(1)}D^{(2)}\cdots D^{(\ell)}
$$
is neutral. The result now follows from Lemma~\ref{neutral string}.
\end{proof}

We finally complete the proof of Proposition~\ref{prop:intrwords}.

\begin{proof}[Proof of Proposition~\ref{prop:intrwords}--(iii)]
Proposition~\ref{prop:intrwords}--(iii) is now simply a combination of Lemmas~\ref{lem:WellWellplus} and~\ref{lem:WnWnplusplus}.
\end{proof}

\subsection{On the number of intransitive words}\hfill

The proof of Theorem~\ref{thm2} is now a consequence of some of the results already established.
\begin{proof}[Proof of Theorem~\ref{thm2}]
For given integers $n_1,n_2$, let $\mb W_j\in \mc W_{\btt,\ell}(n_j)$, $j=1,2$, so that by \eqref{eq:ineqNijintransitivity},
$$
N_{k,k+1}(\mb W_j)>\frac{(n_j)^2}{2}\,, \quad k=1,\hdots, \ell,$$
where we recall that $N_{\ell,\ell+1}=N_{\ell,1}$ by convention. Using these inequalities and \eqref{eq:Nijconcat}, it follows that $\mb W_1\mb W_2\in \mc W_\ell(n_1+n_2)$ satisfies
$$
N_{k,k+1}(\mb W_1\mb W_2)>\frac{(n_1+n_2)^2}{2}, \; k=1,\hdots, \ell.
$$
Using again \eqref{eq:ineqNijintransitivity} we conclude that $\mb W_1\mb W_2\in \mc W_{\btt,\ell}(n_1+n_2)$. Hence,
$$
|\mc W_{\btt,\ell}(n_1+n_2)|\;\geq\; |\mc W_{\btt,\ell}(n_1)|\,|\mc W_{\btt,\ell}(n_2)|\,,
$$
so the sequence $(\log |\mc W_{\btt,\ell}(n)|)_n$ is superadditive, and thus by Fekete's Lemma,
$$
\lim_{n\to \infty} \frac{ \log|\mc W_{\btt,\ell}(n)|}{n}\;=\;\sup_n \frac{ \log|\mc W_{\btt,\ell}(n)|}{n}\;=\;L(\ell)
$$
for some constant $L(\ell)>0$. Since $\mc W_{\btt,\ell}(n)=\pi(\mc D_{\btt,\ell}(n))$ and $\pi$ is a bijection, the result follows.
\end{proof}

\subsection{Some numerical aspects on the number of intransitive words}\label{sec:numerics}\hfill

Through a numerical study, we are able to estimate the number $L(3)$ from Theorem~\ref{thm2} as follows.

A simple algorithm computes $|\mc D_{\btt, 3}(n)|$ in a straightforward way: we iterate through every word in the set $\mc W_3(n)$ and check whether each word is intransitive, a task that can be accomplished in $\Theta(n)$ operations. A drawback of this approach is that the number of words that need checking grows exponentially with respect to $n$. In fact, by Stirling's approximation, we have that $|\mc D_3(n)|=\Theta(27^n/n)$, resulting in a total time complexity of $\Theta(27^n)$. We optimize this algorithm by partitioning the set $\mc W_3(n)$ into words with the same ``prefixes'' (that is, the same sequence of letters for the first $n$ positions in the word). We then avoid prefixes which are already known to not yield intransitive words, thus performing early exits while checking for intransitivity. The algorithm was implemented in C\nolinebreak\hspace{-.05em}\raisebox{.4ex}{\tiny\bf +}\nolinebreak\hspace{-.10em}\raisebox{.4ex}{\tiny\bf +} and executed on the Euler cluster maintained by the Center for Mathematical Sciences Applied to Industry (CeMEAI). Using this method, we were able to compute $|\mc D_{\btt, 3}(n)|$ for $n\leq 11$ (see Table~\ref{tab: In computado}).
            \renewcommand{\arraystretch}{1.2}
            \begin{table}[ht]
                \centering
                \vspace{0.5cm}
                \begin{tabular}{|c|c|c|c|}
                    \hline
                    $n$ & \multicolumn{1}{c|}{$|\mc D_{\btt, 3}(n)|$} & \multicolumn{1}{c|}{$|\mc D_3(n)|$} & \multicolumn{1}{c|}{\textit{$|\mc D_{\btt, 3}(n)|/|\mc D_3(n)|$}} \\ \hline
                    3                   & 15                     & 1\,680                             & $0.008\,928\ldots$              \\ \hline
                    4                   & 39                     & 34\,650                            & $0.001\,125\ldots$              \\ \hline
                    5                   & 5\,196                  & 756\,756                           & $0.006\,866\ldots$              \\ \hline
                    6                   & 32\,115                  & 17\,153\,136                         & $0.001\,872\ldots$             \\ \hline
                    7                   & 2\,093\,199                & 399\,072\,960                        & $0.005\,245\ldots$             \\ \hline
                    8                   & 19\,618\,353               & 9\,465\,511\,770                       & $0.002\,072\ldots$             \\ \hline
                    9                   & 960\,165\,789             & 227\,873\,431\,500                     & $0.004\,213\ldots$              \\ \hline
                    10                  & 11\,272\,949\,151            & 5\,550\,996\,791\,340                    & $0.002\,030\ldots$              \\ \hline
                    11                  & 479\,538\,890\,271            & 136\,526\,995\,463\,040                    & $0.003\,512\ldots$             
                    \\\hline\hline
                    50                  & $(1.47\pm0.01)\times 10^{66}$            & $2.030\,807\ldots\times 10^{69}$                    & $(7.23\pm 0.05)\times 10^{-4}$              \\ \hline
                    100                  & $(1.45\pm0.01)\times 10^{137}$            & $3.765\,234\ldots \times 10^{140}$                    & $(3.87\pm 0.03)\times 10^{-4}$              \\ \hline
                    500                  & $(2.15\pm0.03)\times 10^{708}$            & $2.649\,108\ldots \times 10^{712}$                    & $(8.1\pm 0.1)\times 10^{-5}$              \\ \hline
                    1000                  & $(2.6\pm0.1)\times 10^{1423}$            & $6.368\,665\ldots \times 10^{1427}$                    & $(4.1\pm 0.2)\times 10^{-5}$              \\ \hline
                \end{tabular}
                \vspace{1cm}
                \caption{Exact values of $|\mc D_{\btt, 3}(n)|$, $|\mc D_3(n)|$ and $|\mc D_{\btt, 3}(n)|/|\mc D_3(n)|$ for $3\le n\le 11$, along with estimated values obtained by simulation for $n=50$, $100$, $500$ and $1000$. In the values in the table, the ratios $|\mc D_{\btt, 3}(n)|/|\mc D_3(n)|$ are lower for even $n$. The decrease in ratios for even $n$ may be due to neutral strings only being possible for even $n$, leading to fewer intransitive strings proportionally.
                }\label{tab: In computado}
            \end{table}
            \renewcommand{\arraystretch}{1}

The algorithm just described yields exact values of $|\mc D_{\btt, 3}(n)|$, producing the values shown in Table~\ref{tab: In computado}. However, its performance is very slow in $n$, and even computing $|\mc D_{\btt, 3}(n)|$ for, say, $n=13$ already becomes out of reach. With this issue in mind, we also performed a stochastic simulation to estimate $|\mc D_{\btt, 3}(n)|/|\mc D_3(n)|$ for a sample of values of $n$ up to $1000$, an arbitrary cut where fluctuations in the estimation should still be controlled. We sample uniformly from $D_3(n)$, and then estimate $\Delta L_3(n)=-\log(|\mathcal{D}_{\btt, 3}(n)|/|\mathcal{D}_{3}(n))|/n$. The results are displayed in Figure \ref{fig: estimation}, showing good agreement with the values computed in Table~\ref{tab: In computado}. 
Observe how the values seem to tend to $0$. 

    Since $\lim_n \Delta L_3 (n)=3\log 3-L(3)$, one would expect that $L(3)=3\log 3$, so that the fraction $|\mc D_{\btt, 3}(n)|/|\mc D_3(n)|$ should decay sub-exponentially with $n$. In fact, in the next section we prove that $L(\ell)=\ell\log\ell$, for every $\ell\ge3$.

Compare it with \eqref{eq:subexpondecay}. All the algorithms and data presented here are publicly available in a \href{https://github.com/NonTransitiveDices/NonTransitiveDices.git}{GitHub}\footnote{https://github.com/NonTransitiveDices/NonTransitiveDices.git} repository.

\begin{figure}
\centering
    \begin{tikzpicture}[scale=.9]
	\begin{axis}[
                axis lines = left,
				ylabel={$\Delta L_3(n)$},
                ylabel style={rotate=-90},
                y label style={at={(axis description cs:-0.25,.5)},anchor=south},
                xlabel = {$n$},
    		xscale=1.0,
			    yscale=1,
                ymax = 2.5,
                ymin = 0.008,
                xmin = 1,
                xmax = 2000,
				ymajorgrids=true,
				xmajorgrids=true,
				grid style=dashed,
                ymode = log,
                xmode = log,
                legend style={at={(1.0,0.9)}}
				]
			\addplot+[
                color=red,
                mark = o,
                mark size=.5pt,
                only marks,
                error bars/.cd,
                error bar style={
                  thin,
                },
                y dir=both,
                y explicit,
                ] 
                table[
                    x index=0,
                    y index=2,
                    y error plus index=3, 
                    y error minus index=4,
                    col sep=space] {FinalMonteCarlo.dat};
                    \legend{Simulated Values}
            \addplot[color=blue,
                mark = square,
                mark size=0.5pt,
                only marks] coordinates{(3, 1.5728)(4, 1.6973)(5, 0.9962)(6, 1.0467)(7, 0.7500)(8, 0.7723)(9, 0.6077)(10, 0.6199)(11, 0.5137)};
                \addlegendentry{Exact Values}
	\end{axis}
    \end{tikzpicture}
    \caption{Plot of $\Delta L_3(n)$ for various values of $n$ on a log--log scale. The blue data points were calculated using Table~\ref{tab: In computado}, and the red data points were generated through a stochastic simulation. Note the linear trend of the plot in this case.}
    \label{fig: estimation}
\end{figure}

\section{Some generalities on the counting functions and Gaussian vectors}\label{sec:countingGaussian}

The goal here is to prove Theorem~\ref{teo:CLT}. In  this section $\mb D_m=(D^{(1)},\hdots, D^{(\ell)})$ will always denote a collection of $\ell\geq 3$ independent random dice, where each face of $D^{(i)}=D^{(i)}(m)$ has law $\mc L^{(i)}=\mc L^{(i)}_m$, and such that the sequence $\{\mb D_m\}_m$ satisfies Assumption~\ref{assumption:main}.

\subsection{Properties of the counting functions}\hfill 

Recall the counting variables $N_j$, their normalized versions $\widetilde N_j$, and $E_j$, which were defined in \eqref{eq:def_Nk}, \eqref{eq:def_tilde_Nk} and \eqref{eq:def_Ek}, respectively, and the quantities $\mb p_k,\mb q_k,\mb r_k$ and $\mb s_k$, which depend only on the collection of laws $\mc L^{(1)},\ldots, \mc L^{(\ell)}$, were introduced in \eqref{eq:defpk}--\eqref{eq:defsk}. Note that ${\bf q}_k$ and ${\bf r}_k$ are not necessarily equal because each die has its law. As said before, these quantities also depend on $m$ but we will not write down this dependence explicitly.

The next lemma establishes \eqref{eq:ENkvarNk}.

\begin{lemma}
\label{lema:mean_var_Nk}
We have
\begin{align*}
&\bb E N_k \;=\; n_k n_{k+1} \mb p_k\, , \\
&\var{N_k}
    \;=\; n_k n_{k+1}
    \bigl[n_k(\mb q_k - \mb p_k^2) + n_{k+1}(\mb r_k - \mb p_k^2)
    + \mb p_k^2 + \mb p_k -  \mb q_k -  \mb r_k
    \bigr]\, ,  \quad \text{ and} \\
& \cov{N_{k-1},N_k}=n_{k-1}n_kn_{k+1}\left(\mb s_k-\mb p_{k-1}\mb p_k\right).
\end{align*}
Consequently, under Assumption~\ref{assumption:main} we have that as $m\to \infty$,
\begin{align*}
&\bb E N_k  \;=\; f_k(\infty) f_{k+1}(\infty) \mb p_k(\infty)m^2 +o(m^2)\,,\\
 &\var{N_k} \;=\; \sigma_k(\infty)^2 m^3 +
    O(m^2)\, ,\\
 & \corr{N_{k-1},N_{k}}\;=\; \gamma_k(\infty) +O(1/m).
\end{align*}
\end{lemma}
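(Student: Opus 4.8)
The plan is to compute the first and second moments of $N_k$ directly from the definition $N_k=\sum_{i=1}^{n_k}\sum_{j=1}^{n_{k+1}}\ind_{D_i^{(k)}>D_j^{(k+1)}}$, expand, classify the resulting terms by how many distinct face-indices appear, and use independence together with the fact that entries of a common die are i.i.d.\ to identify each contribution with one of the probabilities $\mb p_k,\mb q_k,\mb r_k,\mb s_k$. The asymptotic statements then follow immediately by substituting $n_k=f_k(m)m$ and invoking Assumption~\ref{assumption:main}, so the whole content is the exact identities; I will only sketch those.

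\textbf{Mean.} By linearity, $\bb E N_k=\sum_{i,j}\bb P(D_i^{(k)}>D_j^{(k+1)})=n_kn_{k+1}\,\bb P(D_1^{(k)}>D_1^{(k+1)})=n_kn_{k+1}\mb p_k$, using that all pairs $(D_i^{(k)},D_j^{(k+1)})$ are equidistributed (independence across dice, i.i.d.\ within a die). For the asymptotics, $n_kn_{k+1}=f_kf_{k+1}m^2$ and Assumption~\ref{assumption:main} gives $f_kf_{k+1}\mb p_k\to f_k(\infty)f_{k+1}(\infty)\mb p_k(\infty)$, whence $\bb E N_k=f_k(\infty)f_{k+1}(\infty)\mb p_k(\infty)m^2+o(m^2)$.

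\textbf{Variance.} Write $N_k^2=\sum_{i,j}\sum_{i',j'}\ind_{D_i^{(k)}>D_j^{(k+1)}}\ind_{D_{i'}^{(k)}>D_{j'}^{(k+1)}}$ and split the outer sum according to the pattern of coincidences among $\{i,i'\}\subset[n_k]$ and $\{j,j'\}\subset[n_{k+1}]$. There are four cases: (a) $i=i'$, $j=j'$ (there are $n_kn_{k+1}$ such terms, each contributing $\mb p_k$); (b) $i=i'$, $j\neq j'$ (there are $n_k n_{k+1}(n_{k+1}-1)$ such ordered pairs, each contributing $\bb P(D_1^{(k)}>D_1^{(k+1)},D_1^{(k)}>D_2^{(k+1)})=\mb r_k$); (c) $i\neq i'$, $j=j'$ (there are $n_k(n_k-1)n_{k+1}$ such terms, each contributing $\mb q_k$); (d) $i\neq i'$, $j\neq j'$ (there are $n_k(n_k-1)n_{k+1}(n_{k+1}-1)$ such terms, each contributing $\bb P(D_1^{(k)}>D_1^{(k+1)})\bb P(D_2^{(k)}>D_2^{(k+1)})=\mb p_k^2$ by independence). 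Subtracting $(\bb E N_k)^2=n_k^2n_{k+1}^2\mb p_k^2$ and collecting terms by powers of $n_k,n_{k+1}$ produces exactly
\begin{equation*}
\var{N_k}=n_kn_{k+1}\bigl[n_k(\mb q_k-\mb p_k^2)+n_{k+1}(\mb r_k-\mb p_k^2)+\mb p_k^2+\mb p_k-\mb q_k-\mb r_k\bigr].
\end{equation*}
Substituting $n_k=f_km$ gives a leading term $f_kf_{k+1}(f_k(\mb q_k-\mb p_k^2)+f_{k+1}(\mb r_k-\mb p_k^2))m^3=\sigma_k^2 m^3$ and an $O(m^2)$ correction, and the convergences in Assumption~\ref{assumption:main}--(ii) give $\var{N_k}=\sigma_k(\infty)^2m^3+O(m^2)$.

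\textbf{Covariance.} Similarly $\bb E(N_{k-1}N_k)=\sum_{i,j}\sum_{i',j'}\bb P(D_i^{(k-1)}>D_j^{(k)},\,D_{i'}^{(k)}>D_{j'}^{(k+1)})$ where now $i\in[n_{k-1}]$, $j,i'\in[n_k]$, $j'\in[n_{k+1}]$; the only coincidence that can occur among these index sets is $j=i'$ (an index of die $k$). If $j\neq i'$ the two indicator events involve disjoint, hence independent, collections of faces and the summand is $\mb p_{k-1}\mb p_k$; there are $n_{k-1}n_k(n_k-1)n_{k+1}$ such terms. If $j=i'$ the summand is $\bb P(D_1^{(k-1)}>D_1^{(k)}>D_1^{(k+1)})=\mb s_k$ and there are $n_{k-1}n_kn_{k+1}$ such terms. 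Hence $\bb E(N_{k-1}N_k)=n_{k-1}n_k(n_k-1)n_{k+1}\mb p_{k-1}\mb p_k+n_{k-1}n_kn_{k+1}\mb s_k$, and subtracting $\bb E N_{k-1}\bb E N_k=n_{k-1}n_k^2n_{k+1}\mb p_{k-1}\mb p_k$ leaves $\cov{N_{k-1},N_k}=n_{k-1}n_kn_{k+1}(\mb s_k-\mb p_{k-1}\mb p_k)$. Dividing by $\sqrt{\var{N_{k-1}}\var{N_k}}$, the leading $m^3$ in the numerator cancels against the $m^{3/2}\cdot m^{3/2}$ in the denominator, giving $\corr{N_{k-1},N_k}=\gamma_k+O(1/m)=\gamma_k(\infty)+O(1/m)$ by definition \eqref{eq:def_gamma_k} and Assumption~\ref{assumption:main}.

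The only mild subtlety — and thus the ``hard'' part of an otherwise routine computation — is bookkeeping the lower-order terms in the variance: one must be careful that the $\mb p_k-\mb q_k-\mb r_k+\mb p_k^2$ term in the bracket is genuinely $O(1)$ and contributes only at order $m^2$, so that it does not interfere with the claimed $\sigma_k(\infty)^2m^3$ leading behaviour; this uses that $\mb p_k,\mb q_k,\mb r_k\in[0,1]$ uniformly in $m$. Everything else is linearity of expectation and independence. \qed
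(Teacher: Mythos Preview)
Your proof is correct and follows essentially the same approach as the paper: compute $\bb E[N_k^2]$ by splitting the double sum according to the four coincidence patterns among the index pairs, identify each contribution with $\mb p_k,\mb q_k,\mb r_k$ or $\mb p_k^2$, and subtract $(\bb E N_k)^2$; the covariance is handled analogously. If anything, you give slightly more detail on the covariance computation than the paper, which simply remarks that it is similar.
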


\begin{proof}
The calculation of $\bb E N_k$ is immediate from the definition. For the
variance, we begin noticing that
\begin{align*}
\esp{N_k^2}
    &\;=\; \ \sum_{i_1=1}^{n_k}\sum_{j_1=1}^{n_{k+1}}
    \sum_{i_2=1}^{n_k}\sum_{j_2=1}^{n_{k+1}}
    \bb P\left( \{ D^{(k)}_{i_1} > D^{(k+1)}_{j_1} \}\cap\{
    D^{(k)}_{i_2} > D^{(k+1)}_{j_2}\}\right).
\end{align*}
The probability of such intersections is always in
$\{\mb p_k^2, \mb p_k, \mb r_k, \mb q_k\}$, depending on whether
indices $i_1$ and $i_2$ or $j_1$ and $j_2$ coincide.
Decomposing into all possibilities, we have
\begin{align*}
\esp{N_k^2}
    &=\sum_{\substack{i_2 \neq i_1\\~j_2\neq j_1}} \mb p_k^2
    +\sum_{\substack{i_2=i_1\\~j_2\neq j_1}} \mb r_k
    +\sum_{\substack{j_2=j_1\\~i_2 \neq i_1}} \mb q_k
    +\sum_{\substack{i_2 = i_1\\~j_2= j_1}} \mb p_k
    \\
    &=n_k(n_k-1)n_{k+1}(n_{k+1}-1) \mb p_k^2
    +n_k n_{k+1}(n_{k+1}-1) \mb r_k\\
    &\qquad +n_k(n_k-1)n_{k+1} \mb q_k
    +n_k n_{k+1} \mb p_k.
\end{align*}
Hence, the variance of $N_k$ is given by
\begin{align*}
\var{N_k}
    &\;=\; \esp{N_k^2} - \esp{N_k}^2
    \;=\; \esp{N_k^2} - (n_k n_{k+1} \mb p_k)^2 \\
    &\;=\; n_k n_{k+1}
    \bigl[n_k(\mb q_k - \mb p_k^2) + n_{k+1}(\mb r_k - \mb p_k^2)
    + \mb p_k^2 + \mb p_k -  \mb q_k -  \mb r_k
    \bigr].
\end{align*}
The calculation of $\cov{N_{k-1},N_k}$ is similar, and the asymptotic expressions follow immediately.
\end{proof}

A case of particular interest is when all the underlying laws are the same, and given by a continuous distribution.

\begin{lemma}\label{lem:prqscontdistr}
    Assume that $\mb D=(D^{(1)},\hdots, D^{(\ell)})$ is a collection of dice for which $\mc L^{(1)}=\cdots = \mc L^{(\ell)}$, with $\mc L^{(1)}$ being a law with no mass points. Then
    $$
    \mb p_k=\frac{1}{2},\quad \mb q_k=\frac{1}{3}=\mb r_k \quad \text{and}\quad \mb s_k=\frac{1}{6}
    $$
    for every $k$. In particular, if $n_1=\cdots = n_\ell\eqqcolon n$, then
    $$
    \esp{N_k}=\frac{n^2}{2},\quad  \var{N_k}=\frac{n^3}{6}\left( 1+\frac{1}{2n} \right), \quad \cov{N_{k-1},N_k}=-\frac{n^3}{12} \quad \text{and}\quad \gamma_k=-\frac{1}{2}.
    $$
\end{lemma}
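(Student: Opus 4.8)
The plan is to compute the four quantities $\mb p_k, \mb q_k, \mb r_k, \mb s_k$ directly from their definitions \eqref{eq:defpk}--\eqref{eq:defsk}, using only that the common law $\mc L\coloneqq\mc L^{(1)}=\cdots=\mc L^{(\ell)}$ has no mass points, and then substitute into the formulas of Lemma~\ref{lema:mean_var_Nk} together with \eqref{eq:def_sigma_k} and \eqref{eq:def_gamma_k}. The key observation enabling each computation is that if $U_1, U_2, \ldots$ are i.i.d.\ with law $\mc L$, then $\bb P(U_a=U_b)=0$ for $a\neq b$ (since $\mc L$ has no atoms), so with probability one the values $U_1,\ldots,U_j$ are pairwise distinct and each of the $j!$ orderings is equally likely by exchangeability. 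Hence any event of the form ``$U_{i_1}$ is the largest among $U_{i_1},\ldots,U_{i_j}$'' or a prescribed total order among $j$ of the variables has probability $1/j!$ type expressions obtained by counting favorable orderings.

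First I would treat $\mb p_k=\bb P(D_1^{(k)}>D_1^{(k+1)})$: this involves two i.i.d.\ variables, and by symmetry plus absence of ties it equals $1/2$. Next, $\mb q_k=\bb P(D_1^{(k)}>D_1^{(k+1)},\, D_2^{(k)}>D_1^{(k+1)})$ is the probability that the single variable $D_1^{(k+1)}$ is the minimum of the three i.i.d.\ variables $D_1^{(k)}, D_2^{(k)}, D_1^{(k+1)}$, which is $1/3$; the identical computation (with roles of "two from die $k$" replaced by "two from die $k+1$") gives $\mb r_k=\bb P(D_1^{(k)}>D_1^{(k+1)},\, D_1^{(k)}>D_2^{(k+1)})=1/3$, namely $D_1^{(k)}$ being the maximum of three i.i.d.\ variables. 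Finally $\mb s_k=\bb P(D_1^{(k-1)}>D_1^{(k)}>D_1^{(k+1)})$ is the probability of one specific total order among three i.i.d.\ variables, hence $1/6$. I would phrase these as a short lemma-internal remark that no-atom plus i.i.d.\ reduces everything to counting orderings, so each line is one sentence.

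For the ``in particular'' part, with $n_1=\cdots=n_\ell=n$ I would just plug $\mb p_k=1/2$, $\mb q_k=\mb r_k=1/3$ into Lemma~\ref{lema:mean_var_Nk}. The mean is $n^2\cdot\tfrac12$. For the variance, the bracket becomes $n(\tfrac13-\tfrac14)+n(\tfrac13-\tfrac14)+\tfrac14+\tfrac12-\tfrac13-\tfrac13 = \tfrac{n}{6}-\tfrac16$, wait — let me recompute: $\tfrac14+\tfrac12-\tfrac23 = \tfrac{3+6-8}{12}=\tfrac{1}{12}$, so the bracket is $\tfrac{n}{12}+\tfrac{n}{12}+\tfrac{1}{12}=\tfrac{n}{6}+\tfrac{1}{12}$, giving $\var{N_k}=n^2(\tfrac{n}{6}+\tfrac{1}{12})=\tfrac{n^3}{6}(1+\tfrac{1}{2n})$ as claimed. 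For the covariance, $\cov{N_{k-1},N_k}=n^3(\mb s_k-\mb p_{k-1}\mb p_k)=n^3(\tfrac16-\tfrac14)=-\tfrac{n^3}{12}$. Then $\gamma_k=\cov{N_{k-1},N_k}/\sqrt{\var{N_{k-1}}\var{N_k}}$; to leading order this is $(-n^3/12)/(n^3/6)=-\tfrac12$, but since the statement writes an exact equality for $\gamma_k$ I would either note that the displayed $\var{N_k}=\tfrac{n^3}{6}(1+\tfrac1{2n})$ makes $\gamma_k=\tfrac{-n^3/12}{\tfrac{n^3}{6}(1+\tfrac1{2n})}=\tfrac{-1/2}{1+1/(2n)}$, and interpret the "$=-1/2$" as the $\gamma_k(\infty)$ from \eqref{eq:def_gamma_k}, or simply use $\sigma_k^2=f_kf_{k+1}(f_k(\mb q_k-\mb p_k^2)+f_{k+1}(\mb r_k-\mb p_k^2))$ with all $f$'s equal to $1$ which gives exactly $\sigma_k^2=\tfrac16$ and $\gamma_k=\tfrac{\mb s_k-\mb p_{k-1}\mb p_k}{\sigma_{k-1}\sigma_k}=\tfrac{-1/12}{1/6}=-\tfrac12$ cleanly. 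I expect there is no real obstacle here; the only mild subtlety is being careful about whether one wants the exact finite-$n$ value of $\var{N_k}$ or its leading-order $\sigma_k^2 m^3$ form, and making the statement about $\gamma_k$ consistent with whichever convention \eqref{eq:def_gamma_k} uses. I would resolve this by computing $\gamma_k$ via the $\sigma_k$ formula \eqref{eq:def_gamma_k}, which is the definition used in the CLT.
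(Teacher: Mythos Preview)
Your proposal is correct and follows essentially the same approach as the paper: both use that i.i.d.\ atomless variables almost surely have distinct values with all orderings equally likely to read off $\mb p_k=\tfrac12$, $\mb q_k=\mb r_k=\tfrac13$, $\mb s_k=\tfrac16$, and then invoke Lemma~\ref{lema:mean_var_Nk} together with \eqref{eq:def_sigma_k}--\eqref{eq:def_gamma_k} for the remaining claims. Your observation about the $\gamma_k$ subtlety is apt; the paper indeed intends $\gamma_k$ in the sense of \eqref{eq:def_gamma_k} (i.e., via $\sigma_k$), which gives exactly $-\tfrac12$.
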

\begin{proof}
    Let $X_1,X_2,X_3$ be three independent random variables with distribution $\mc L^{(1)}$. Because $\mc L^{(1)}$ has no mass points, we have
    \begin{equation}\label{eq:nomassprobs}
    \bb P (X_1=X_2)=\bb P (X_2=X_3)=\bb P (X_3=X_1)=0,
    \end{equation}
    which we use extensively in what follows.
    
    Because $X$ and $Y$ have the same distribution, we obtain
    $$
    \mb p_k=\bb P(X_1>X_2)=\bb P(X_2>X_1)=\frac{1}{2}\left(\bb P(X_1>X_2)+\bb P(X_2\geq X_1)\right)=\frac{1}{2}.
    $$
    In a similar spirit, 
    $$
    \mb q_k=\bb P(X_2>X_1,X_3>X_1)=\bb P(X_1>X_2,X_3>X_2)=\bb P(X_1>X_3,X_2>X_3).
    $$
    Summing up and using \eqref{eq:nomassprobs} we obtain the claimed value $\mb q_k=1/3$. The values of $\mb r_k$ and $\mb s_k$ are computed in a similar manner, and the claimed values of the mean, variance, covariance and $\gamma_k$ are then a consequence of Lemma~\ref{lema:mean_var_Nk} and \eqref{eq:def_sigma_k}--\eqref{eq:def_gamma_k}.
\end{proof}

The quantities $E_k$ from \eqref{eq:def_Ek} admit a result similar to Lemma~\ref{lema:mean_var_Nk}. In order to state and prove it, we need to introduce certain quantities analogous to $\mb p_k,\mb q_r, \mb r_k, \mb s_k$. We introduce
\begin{equation}\label{eq:defpkequal}
{\bf p}_k^= \;\coloneqq\; \bb P\left(D^{(k)}_1= D^{(k+1)}_1\right)\;=\;\bb E\left(\ind_{D_1^{(k)}=D_1^{(k+1)}}\right)
\end{equation}
which is the that probability a given face of the $k$-th die coincide with a given face of the  $(k+1)$-th die;
\begin{equation}\label{eq:defqkequal}
{\bf q}_k^= \;\coloneqq\; \bb P\left(D^{(k)}_1= D^{(k+1)}_1, D^{(k)}_2= D^{(k+1)}_1\right)
\end{equation}
which is the probability that two given faces of the $k$-th die coincide with a given face of the  $(k+1)$-th die;
\begin{equation}\label{eq:defrkequal}
{\bf r}_k^= \;\coloneqq\; \bb P\left(D^{(k)}_1= D^{(k+1)}_1, D^{(k)}_1= D^{(k+1)}_2\right)
\end{equation}
which is the probability that two given faces of the $(k+1)$-th die coincide with a given face of the  $k$-th die; and
\begin{equation}\label{eq:defskequal}
{\bf s}_k^= \;\coloneqq\; \bb P\left(D^{(k-1)}_1 = D^{(k)}_1 = D^{(k+1)}_1\right),
\end{equation}
which is the probability that three given faces, one from each of the dice $D^{(k-1)}$, $D^{(k)}$ and $D^{(k+1)}$, coincide.

The next result is the analogue of Lemma~\ref{lema:mean_var_Nk} for the variables $E_k$'s.

\begin{lemma}\label{lem:meanvarEk}
The random variables $E_k$, $k=1,\hdots,\ell$, satisfy
\begin{align}
\bb E \left(E_k\right) &= n_k n_{k+1} \mb p_k^=  \label{eq:Ek_mean}\\
\var{E_k}
    &= n_k n_{k+1}
    \bigl[n_k(\mb q_k^= - (\mb p_k^=)^2) + n_{k+1}(\mb r_k^= - (\mb p_k^=)^2) 
    + (\mb p_k^{=})^2 +\mb p_k^= - \mb q_k^= - \mb r_k^=
    \bigr].\label{eq:Ek_variance}
\end{align}
In particular, the estimates \eqref{eq:variancetiesgeneral} hold true.
\end{lemma}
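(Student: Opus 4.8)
The plan is to repeat the computation in the proof of Lemma~\ref{lema:mean_var_Nk} almost verbatim, replacing the strict inequality $D_i^{(k)}>D_j^{(k+1)}$ by the equality $D_i^{(k)}=D_j^{(k+1)}$ throughout, and correspondingly the probabilities $\mathbf p_k,\mathbf q_k,\mathbf r_k$ by their diagonal analogues $\mathbf p_k^=,\mathbf q_k^=,\mathbf r_k^=$ from \eqref{eq:defpkequal}--\eqref{eq:defrkequal}. The identity \eqref{eq:Ek_mean} is immediate from linearity of expectation, since $\bb E(\ind_{D_i^{(k)}=D_j^{(k+1)}})=\mathbf p_k^=$ for every pair $(i,j)$, distinct faces being identically distributed.

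For \eqref{eq:Ek_variance} I would expand
$$
\bb E(E_k^2)=\sum_{i_1,i_2=1}^{n_k}\sum_{j_1,j_2=1}^{n_{k+1}}\bb P\bigl(\{D_{i_1}^{(k)}=D_{j_1}^{(k+1)}\}\cap\{D_{i_2}^{(k)}=D_{j_2}^{(k+1)}\}\bigr)
$$
and split the sum into the four classes determined by whether $i_1=i_2$ and whether $j_1=j_2$. Using independence of distinct faces, the intersection probability equals $(\mathbf p_k^=)^2$ when $i_1\neq i_2$ and $j_1\neq j_2$; it equals $\mathbf r_k^=$ when $i_1=i_2$, $j_1\neq j_2$ (one face of $D^{(k)}$ matching two faces of $D^{(k+1)}$); it equals $\mathbf q_k^=$ when $i_1\neq i_2$, $j_1=j_2$; and it equals $\mathbf p_k^=$ in the fully diagonal case. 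Counting the tuples in each class yields a closed form for $\bb E(E_k^2)$, and subtracting $\bb E(E_k)^2=(n_kn_{k+1}\mathbf p_k^=)^2$ and collecting terms produces \eqref{eq:Ek_variance} — exactly the algebraic rearrangement already carried out for $\var{N_k}$.

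Finally, \eqref{eq:variancetiesgeneral} follows from crude bounds: all of $\mathbf p_k^=,\mathbf q_k^=,\mathbf r_k^=$ lie in $[0,1]$, so each bracketed difference appearing in \eqref{eq:Ek_variance} is bounded in absolute value by a fixed constant; together with $n_k\le m$ this gives $\var{E_k}=O(m^3)$, while $\bb E(E_k)\le n_kn_{k+1}\le m^2$. I do not expect any genuine obstacle here; the only point requiring care is the bookkeeping of the four index classes in the second-moment expansion, which mirrors that in Lemma~\ref{lema:mean_var_Nk}.
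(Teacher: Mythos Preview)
Your proposal is correct and follows exactly the approach the paper indicates: mimic the computation of Lemma~\ref{lema:mean_var_Nk} with $>$ replaced by $=$ and $\mathbf p_k,\mathbf q_k,\mathbf r_k$ replaced by $\mathbf p_k^=,\mathbf q_k^=,\mathbf r_k^=$, then use that all these quantities are probabilities (hence bounded) together with $n_k\le m$ to deduce \eqref{eq:variancetiesgeneral}.
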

\begin{proof}
The proof of \eqref{eq:Ek_mean}--\eqref{eq:Ek_variance} is done following the same steps used in the proof of Lemma~\ref{lema:mean_var_Nk}, we skip the details. The estimates \eqref{eq:variancetiesgeneral} then follow, having in mind \eqref{deff:fkm} and the fact that each $\mb p_k^=, \mb q_k^=, \mb r_k^=, \mb s_k^=$ is a probability, and thus bounded as functions of $m$.
\end{proof}

Using the previous result, we are able to compare \eqref{eq:asymptoticnotie} with \eqref{eq:variancetiesgeneral}.

\begin{lemma}\label{lem:notiemeanvariance}
If condition \eqref{eq:asymptoticnotie} holds, then for every $k=1,\hdots,\ell$, it is valid
$$
\bb E(E_k)= o(m^2)  \quad \text{and}\quad \var{E_k}= o(m^3) \quad \text{as}\quad m\to \infty.
$$
\end{lemma}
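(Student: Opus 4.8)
The starting point is the explicit formulas from Lemma~\ref{lem:meanvarEk}: $\bb E(E_k)=n_kn_{k+1}\mb p_k^=$ and $\var{E_k}=n_kn_{k+1}[n_k(\mb q_k^=-(\mb p_k^=)^2)+n_{k+1}(\mb r_k^=-(\mb p_k^=)^2)+(\mb p_k^=)^2+\mb p_k^=-\mb q_k^=-\mb r_k^=]$. Since $n_k=f_km\leq m$, to obtain $\bb E(E_k)=o(m^2)$ and $\var{E_k}=o(m^3)$ it suffices to show that each of $\mb p_k^=$, $\mb q_k^=$ and $\mb r_k^=$ tends to $0$ as $m\to\infty$; the remaining terms in the bracket are $O(1)$ and multiply $n_kn_{k+1}=O(m^2)$, contributing only $O(m^2)=o(m^3)$. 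So the whole lemma reduces to controlling these three tie-probabilities.

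The first one is immediate: $\mb p_k^= = \bb P(D_1^{(k)}=D_1^{(k+1)})\to 0$ is exactly hypothesis \eqref{eq:asymptoticnotie}. The plan for $\mb q_k^=$ and $\mb r_k^=$ is simply to dominate them by $\mb p_k^=$. Indeed, from the definition \eqref{eq:defqkequal}, the event $\{D_1^{(k)}=D_1^{(k+1)},\,D_2^{(k)}=D_1^{(k+1)}\}$ is contained in the event $\{D_1^{(k)}=D_1^{(k+1)}\}$, so $\mb q_k^=\leq \mb p_k^=$; likewise $\mb r_k^=\leq \mb p_k^=$ from \eqref{eq:defrkequal}. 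Hence $\mb q_k^=,\mb r_k^=\to 0$ as well. (If desired one can also note $\mb q_k^=\leq (\mb p_k^=)\cdot\bb P(D_2^{(k)}=D_1^{(k+1)}\mid D_1^{(k)}=D_1^{(k+1)})$, but the crude bound $\mb q_k^=\le \mb p_k^=$ is all that is needed.)

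Putting these together: $\bb E(E_k)=n_kn_{k+1}\mb p_k^=\leq m^2\,\mb p_k^==o(m^2)$, and in the variance formula the two leading terms are bounded by $m^3(\mb q_k^=+\mb r_k^=)\le 2m^3\mb p_k^==o(m^3)$ while the lower-order bracket terms contribute $O(m^2)$; altogether $\var{E_k}=o(m^3)$. This proves \eqref{eq:variancetiesslow}. There is no real obstacle here — the only mild point to be careful about is making sure every term in the variance bracket is genuinely $O(1)$ (each is a fixed linear combination of probabilities, hence bounded), so that after multiplying by $n_kn_{k+1}=O(m^2)$ it is harmlessly absorbed into $o(m^3)$; everything else is a direct consequence of the monotonicity of probability under inclusion of events together with the hypothesis \eqref{eq:asymptoticnotie}.
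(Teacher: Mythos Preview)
Your proof is correct and follows essentially the same approach as the paper: both invoke the explicit formulas from Lemma~\ref{lem:meanvarEk}, observe that $\mb p_k^=\to 0$ is precisely the hypothesis~\eqref{eq:asymptoticnotie}, and then use the inclusion of events to get $\mb q_k^=,\mb r_k^=\le \mb p_k^=\to 0$, which kills the leading $m^3$ coefficient in the variance.
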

\begin{proof}
Condition \eqref{eq:asymptoticnotie} is the same as saying that $\bm p_k^=\to 0$ for every $k$. Thus, the claim on $\bb E(E_k)$ is immediate from \eqref{eq:Ek_mean}.
From \eqref{eq:Ek_variance} and the fact that $\mb p_k^=, \mb q_k^=$ and $\mb r_k^=$ remain bounded as $m\to\infty$, we see that
$$
\var{E_k}\;=\;
m^3 f_k(\infty)f_{k+1}(\infty)
\bigl[f_k(\infty)(\mb q_k^=-(\mb p_k^=)^2) + f_{k+1}(\infty)(\mb r_k^=-(\mb p_k^=)^2)\bigr]+O(m^2)\,.
$$
A comparison of \eqref{eq:defpkequal} and \eqref{eq:defqkequal} shows that $0\leq \max\{\mb q_k^=, \mb r_k^= \}\leq \mb p_k^=$, so that \eqref{eq:asymptoticnotie} implies also that $\mb q_k^=, \mb r_k^= \to 0$ and the claim on $\var{E_k}$ follows.
\end{proof}

Next, we turn our attention to the structure of the covariance matrix \eqref{eq:covariancematrixCLT}. It turns out that in the case of particular interest to our problem, the coefficients $\gamma_j(\infty)$ have a special structure that we now compute.

\begin{proposition}\label{prop:coeffgammak}
Let $\{\mb D_m\}_m$ be a sequence satisfying \eqref{eq:assumptionpqrssym}. Then the coefficients $(\gamma_k(\infty))$ and $(f_k(\infty))$ from Assumption~\ref{assumption:main} are related by
$$
\gamma_k(\infty)\;=\;-\frac{f_{k-1}(\infty)f_k(\infty)f_{k+1}(\infty)}{\sqrt{f_{k-1}(\infty)f_k(\infty)(f_{k-1}(\infty)+f_{k}(\infty))}\sqrt{f_{k}(\infty)f_{k+1}(\infty)(f_{k}(\infty)+f_{k+1}(\infty))}}
$$
for $k=1,\hdots,\ell$.
\end{proposition}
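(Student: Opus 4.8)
The plan is to substitute the limiting values of $\mb p_k,\mb q_k,\mb r_k,\mb s_k$ provided by \eqref{eq:assumptionpqrssym} directly into the closed-form expressions \eqref{eq:def_sigma_k} and \eqref{eq:def_gamma_k} for $\sigma_k$ and $\gamma_k$, and then simplify. The only point requiring justification is that the limit $m\to\infty$ may be taken inside these algebraic expressions, which is legitimate since Assumption~\ref{assumption:main}--(i)--(ii) ensures $f_k(m)\to f_k(\infty)\in(0,1]$ and $\sigma_k(m)\to\sigma_k(\infty)\in(0,\infty)$, so all quantities involved stay bounded and the denominators stay bounded away from zero.

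First I would record the limits of the relevant building blocks. From \eqref{eq:assumptionpqrssym}, as $m\to\infty$ one gets $\mb q_k-\mb p_k^2\to\tfrac13-\tfrac14=\tfrac1{12}$ and $\mb r_k-\mb p_k^2\to\tfrac1{12}$; moreover, since $\mb p_j\to\tfrac12$ for \emph{every} $j$, also $\mb p_{k-1}\mb p_k\to\tfrac14$ and hence $\mb s_k-\mb p_{k-1}\mb p_k\to\tfrac16-\tfrac14=-\tfrac1{12}$. Feeding these into \eqref{eq:def_sigma_k} yields
\begin{equation*}
\sigma_k(\infty)=\sqrt{\frac{f_k(\infty)f_{k+1}(\infty)\bigl(f_k(\infty)+f_{k+1}(\infty)\bigr)}{12}},
\end{equation*}
and the analogous formula with $k$ replaced by $k-1$.

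Next I would substitute these two expressions, together with the limit of $\mb s_k-\mb p_{k-1}\mb p_k$, into \eqref{eq:def_gamma_k}:
\begin{equation*}
\gamma_k(\infty)=\frac{-\tfrac1{12}\,f_{k-1}(\infty)f_k(\infty)f_{k+1}(\infty)}{\sqrt{\tfrac1{12}f_{k-1}(\infty)f_k(\infty)\bigl(f_{k-1}(\infty)+f_k(\infty)\bigr)}\ \sqrt{\tfrac1{12}f_k(\infty)f_{k+1}(\infty)\bigl(f_k(\infty)+f_{k+1}(\infty)\bigr)}}.
\end{equation*}
The two factors $\tfrac1{12}$ under the square roots combine to $\tfrac1{12}$, which cancels against the $-\tfrac1{12}$ in the numerator, leaving precisely the formula in the statement.

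There is essentially no obstacle here: the claim is a bookkeeping consequence of the definitions \eqref{eq:def_sigma_k}--\eqref{eq:def_gamma_k} and the hypothesis \eqref{eq:assumptionpqrssym}. (As a sanity check, setting $f_{k-1}(\infty)=f_k(\infty)=f_{k+1}(\infty)$ recovers $\gamma_k(\infty)=-\tfrac12$, consistent with Lemma~\ref{lem:prqscontdistr}.) The only line worth spelling out carefully in the final write-up is the passage to the limit inside the quotient, i.e.\ that $\sigma_{k-1}(\infty)\sigma_k(\infty)\neq0$ so that no degeneracy occurs, which is exactly what the positivity in Assumption~\ref{assumption:main}--(i)--(ii) guarantees.
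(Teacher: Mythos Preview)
Your proposal is correct and takes exactly the same approach as the paper, which simply states that these are straightforward calculations using \eqref{eq:def_gamma_k}. You have merely spelled out the substitutions and cancellations in detail, including the useful observation that Assumption~\ref{assumption:main}--(ii) ensures $\sigma_{k-1}(\infty)\sigma_k(\infty)>0$ so that passing to the limit in the quotient is justified.
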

\begin{proof}
These are simple calculations using \eqref{eq:def_gamma_k}.
\end{proof}

\subsection{Gaussian vectors associated to the structured covariance matrix}\hfill 


Under the condition \eqref{eq:assumptionpqrssym},
Proposition~\ref{prop:coeffgammak} ensures that the nontrivial entries
$\gamma_k(\infty)$ of the covariance matrix \eqref{eq:covariancematrixCLT} have
a particular structure, which ultimately yields that the probability in the
right-hand side of \eqref{eq:intransitivelimit_ineq} vanishes, and proving this last claim is the main goal of this subsection.

To avoid cumbersome notation, for the calculations that come next we denote
$$
\f_k \;\coloneqq\; f_k(\infty),\quad k=1,\hdots, \ell, \quad \f_{\ell+1}\coloneqq f_1(\infty)\,.
$$

Thanks to Proposition~\ref{prop:coeffgammak},
\begin{equation}\label{eq:gammakfrakf}
\gamma_k(\infty)\;=\;-\frac{\f_{k-1}\f_k\mf f_{k+1}}{\sqrt{\f_{k-1}\mf f_k(\f_{k-1}+\f_{k})}\sqrt{\f_{k}\f_{k+1}(\f_{k}+\f_{k+1})}}\,,
\end{equation}
and now we study the covariance matrix $\Sigma$ from \eqref{eq:covariancematrixCLT} with coefficients given as in \eqref{eq:gammakfrakf}. We start by collecting some properties of these $\gamma_k(\infty)$'s.

\begin{proposition}\label{prop:propertiescoeffgammak}
The coefficients $\gamma_k=\gamma_k(\infty)$, $k=1,\hdots,\ell$, in \eqref{eq:gammakfrakf} satisfy the following properties.
\begin{enumerate}[(i)]
\item $\displaystyle\gamma_k^2 = \frac{\f_{k-1}}{\f_{k-1}+\f_k} \cdot
    \frac{\f_{k+1}}{\f_{k}+\f_{k+1}}$.
\item $\gamma_k\in (-1,0)$ for every $k$.
\item As functions of the $\f_j$'s, the coefficients $\gamma_k=\gamma_k(\f_1, \ldots, \f_\ell)$ are scale-invariant: for every $k \in
    [\ell]$ and $r > 0$ we have
\begin{equation*}
\label{eq:gamma_k_scale_inv}
\gamma_k(r\f_1, \ldots, r\f_\ell)
    = \gamma_k(\f_1, \ldots, \f_\ell).
\end{equation*}
\item $\prod_{k} \gamma_k = (-1)^\ell \prod_k \frac{\f_k}{\f_k + \f_{k+1}}$.
\item $|\prod_k \gamma_k| \le 2^{-\ell}$, with equality being valid if, and only if, $\f_1=\cdots=\f_\ell$.
\end{enumerate}
\end{proposition}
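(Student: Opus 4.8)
The plan is to handle the five claims in order, with (i) as the computational core and (ii)--(iv) following from it by short manipulations, leaving (v) as a one-line optimization.

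For (i), I would simply square \eqref{eq:gammakfrakf}: the numerator becomes $\f_{k-1}^2\f_k^2\f_{k+1}^2$ and the denominator $\f_{k-1}\f_k^2\f_{k+1}(\f_{k-1}+\f_k)(\f_k+\f_{k+1})$, so after cancellation $\gamma_k^2=\tfrac{\f_{k-1}}{\f_{k-1}+\f_k}\cdot\tfrac{\f_{k+1}}{\f_k+\f_{k+1}}$. Then (ii) is immediate: the right-hand side of \eqref{eq:gammakfrakf} is strictly negative since every $\f_j>0$, while each of the two fractions in (i) lies strictly in $(0,1)$, forcing $\gamma_k^2<1$ and hence $\gamma_k\in(-1,0)$. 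For (iii), replacing each $\f_j$ by $r\f_j$ multiplies the numerator of \eqref{eq:gammakfrakf} by $r^3$ and each of the two square roots in its denominator by $r^{3/2}$, so the quotient is unchanged.

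For (iv), I would multiply the identity of (i) over $k=1,\dots,\ell$ with cyclic indices, getting $\prod_k\gamma_k^2=\big(\prod_k\tfrac{\f_{k-1}}{\f_{k-1}+\f_k}\big)\big(\prod_k\tfrac{\f_{k+1}}{\f_k+\f_{k+1}}\big)$; by the cyclic reindexings $k-1\mapsto k$ and $k+1\mapsto k$ (noting $\prod_k\f_{k-1}=\prod_k\f_{k+1}=\prod_k\f_k$), both factors equal $P:=\prod_k\tfrac{\f_k}{\f_k+\f_{k+1}}$. Thus $\big|\prod_k\gamma_k\big|=P$, and combining with $\gamma_k<0$ from (ii) gives $\prod_k\gamma_k=(-1)^\ell P$, which is exactly (iv).

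Finally, for (v) it suffices by (iv) to prove $P\le 2^{-\ell}$ with equality iff all the $\f_j$ coincide. The trick is that the complementary product $\prod_k\tfrac{\f_{k+1}}{\f_k+\f_{k+1}}$ equals $P$ as well (its numerator reindexes cyclically to $\prod_k\f_k$), so $P^2=\prod_k\tfrac{\f_k\f_{k+1}}{(\f_k+\f_{k+1})^2}\le 4^{-\ell}$ by AM--GM on each factor, whence $P\le 2^{-\ell}$; equality forces $\f_k\f_{k+1}=\tfrac14(\f_k+\f_{k+1})^2$, i.e. $\f_k=\f_{k+1}$, for every $k$, so $\f_1=\cdots=\f_\ell$, and that case plainly attains it. I do not anticipate a real obstacle: the proposition is entirely algebraic, and the only nonroutine point is recognizing the complementary-product identity in (v), which could equally be avoided by bounding $\prod_k(1+\f_{k+1}/\f_k)\ge\prod_k 2\sqrt{\f_{k+1}/\f_k}=2^\ell$ and telescoping cyclically.
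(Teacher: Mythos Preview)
Your proof is correct and essentially matches the paper's: items (i), (iii), (iv) come straight from the defining formula, (ii) is read off from (i) and positivity, and (v) reduces via (iv) to an AM--GM bound on $\prod_k \tfrac{\f_k}{\f_k+\f_{k+1}}$. The only cosmetic difference is that the paper applies AM--GM directly to $\tfrac{\f_k+\f_{k+1}}{2}\ge\sqrt{\f_k\f_{k+1}}$ and multiplies, whereas you first rewrite $P^2$ via the complementary product; these are the same inequality rearranged.
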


\begin{proof}
Items \textit{(i)}, \textit{(iii)} and \textit{(iv)} are immediate
from~\eqref{eq:gammakfrakf}. It is obvious that $\gamma_k<0$, so to prove \textit{(ii)} it suffices to show that $\gamma_k^2<1$ which, in turn, by {\it (i)} is equivalent to the inequality
\begin{equation*}
\f_{k-1}\f_{k+1}<(\f_{k-1}+\f_k)(\f_{k}+\f_{k+1}),\quad \text{that is}, \quad 0<\f_{k-1}\f_k+\f_k^2+\f_k\f_{k+1}.
\end{equation*}
Since $\f_k>0$ for every $k$, part \textit{(ii)} follows.
Finally, for item \textit{(v)} we apply the  inequality between arithmetic and geometric means to obtain
\begin{equation*}
\frac{f_k + f_{k+1}}{2} \ge \sqrt{f_k f_{k+1}},\
    \text{for every }k \in [\ell].
\end{equation*}
Multiplying all inequalities above, the result follows using
item \textit{(iv)}.
\end{proof}

With the aforementioned properties of $\gamma_k=\gamma_k(\infty)$ from \eqref{eq:gammakfrakf} at hand, we now need to collect some important information on the associated covariance matrix $\Sigma$ from \eqref{eq:covariancematrixCLT}. From a linear algebra perspective, this is an example of a periodic Jacobi matrix (see for instance \cite{vanMoerbekeJac, molinari2008determinants, FergusonWarren}). However, we could not explore these interpretations for the results needed later. Instead, in our case, we use the additional structure \eqref{eq:gammakfrakf} in a fundamental way to obtain the next results.

\begin{lemma}
\label{lema:det_Sigma_expression}
Let $\Sigma$ be as in \eqref{eq:covariancematrixCLT} with coefficients $\gamma_k=\gamma_k(\infty)$ as in \eqref{eq:gammakfrakf}. Then
\begin{equation}
\label{eq:det_Sigma_expression}
\det \Sigma
    \;=\; 1 + 2 (-1)^{\ell-1} \gamma_{1} \ldots \gamma_{\ell} + \sum_{m=1}^{\ell}
    (-1)^{m} \sum_{\mathclap{\substack{ i_1 < i_2 < \cdots < i_m:\\
    i_j\text{ non-consecutive}}}}
        \ \gamma_{i_1}^2 \ldots \gamma_{i_m}^2\,,
\end{equation}
where in the sum above, the indices $1$ and $\ell$ are considered as consecutive.
\end{lemma}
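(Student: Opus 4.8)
The plan is to compute $\det\Sigma$ straight from the Leibniz expansion and exploit the sparsity of $\Sigma$. From \eqref{eq:covariancematrixCLT} the only nonzero entries are $\Sigma_{ii}=1$, the sub/super-diagonal entries $\Sigma_{k,k+1}=\Sigma_{k+1,k}=\gamma_{k+1}$ for $1\le k\le\ell-1$, and the corner entries $\Sigma_{1,\ell}=\Sigma_{\ell,1}=\gamma_1$. Thus $\Sigma_{ij}\neq 0$ only when $i=j$ or $\{i,j\}$ is an edge of the cycle graph $C_\ell$ on $[\ell]$ (with $\{1,\ell\}$ declared an edge), and in
$$\det\Sigma=\sum_{\pi\in S_\ell}\sgn(\pi)\prod_{i=1}^\ell\Sigma_{i,\pi(i)}$$
only permutations $\pi$ whose non-fixed points are moved along edges of $C_\ell$ can contribute.

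First I would classify those permutations via their cycle structure. If $(a_1\,a_2\,\cdots\,a_r)$, $r\ge 2$, is a cycle of such a $\pi$, then $\{a_1,a_2\},\dots,\{a_r,a_1\}$ are all edges of $C_\ell$; hence either $r=2$ (an edge used back and forth) or, since $C_\ell$ has no other cycles, $r=\ell$ and the cycle is the whole Hamiltonian cycle. So exactly three disjoint families contribute: the identity; the non-trivial products of pairwise disjoint edge-transpositions (equivalently, non-empty matchings of $C_\ell$) with all remaining vertices fixed; and the two $\ell$-cycles $(1\,2\,\cdots\,\ell)$ and its inverse, which are distinct because $\ell\ge 3$.

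It then remains to weigh each family. The identity contributes $\prod_i\Sigma_{ii}=1$. Each $\ell$-cycle has sign $(-1)^{\ell-1}$ and weight $\Sigma_{1,2}\Sigma_{2,3}\cdots\Sigma_{\ell-1,\ell}\Sigma_{\ell,1}=\gamma_1\gamma_2\cdots\gamma_\ell$, so the two together contribute $2(-1)^{\ell-1}\gamma_1\cdots\gamma_\ell$. A matching with $m\ge 1$ edges contributes sign $(-1)^m$ and weight equal to the product, over its edges, of $\Sigma_{k,k+1}\Sigma_{k+1,k}=\gamma_{k+1}^2$ (and $\gamma_1^2$ for the edge $\{\ell,1\}$). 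Labelling the edge $\{k,k+1\}$ by $k+1$ and $\{\ell,1\}$ by $1$ — so that label $j$ corresponds to the edge $\{j-1,j\}$ modulo $\ell$ — two edges are vertex-disjoint exactly when their labels are distinct and non-consecutive in $[\ell]$, with $1$ and $\ell$ regarded as consecutive. Hence $m$-edge matchings of $C_\ell$ biject with $m$-element subsets $\{i_1<\cdots<i_m\}\subseteq[\ell]$ of pairwise non-consecutive indices, the matching's weight being $\gamma_{i_1}^2\cdots\gamma_{i_m}^2$. Adding the contributions of the three families produces exactly \eqref{eq:det_Sigma_expression}.

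The argument is bookkeeping, so there is no genuine obstacle; the points most prone to slips are attaching the correct $\gamma$-index to each edge of $C_\ell$, verifying the bijection between matchings and non-consecutive index sets (in particular the cyclic identification of $1$ and $\ell$), and the sign $(-1)^{\ell-1}$ of the two maximal cycles. I note the computation uses only the \enquote{tridiagonal plus corners} shape of $\Sigma$ and not the explicit form \eqref{eq:gammakfrakf} of the coefficients, which becomes relevant only for the subsequent analysis of $\det\Sigma$.
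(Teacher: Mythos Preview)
Your proof is correct and follows essentially the same approach as the paper: both start from the Leibniz expansion, use the sparsity of $\Sigma$ to restrict to permutations moving each index to an adjacent one on the cycle, and then classify the contributing permutations via their cycle decomposition into the identity, products of disjoint adjacent transpositions (your matchings of $C_\ell$), and the two full $\ell$-cycles. Your framing via the cycle graph and matchings is a bit cleaner, and your closing remark that the argument uses only the tridiagonal-plus-corners shape of $\Sigma$ rather than \eqref{eq:gammakfrakf} is correct and worth noting.
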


\begin{proof}
We start from the expression of $\det \Sigma$ as a sum over all
permutations $\sigma \in S_\ell$, the symmetric group of degree $\ell$,
\begin{equation*}
\det \Sigma
    \;=\; \sum_{\sigma \in S_\ell} \sgn \sigma \cdot
        \prod_{i \in [\ell]} \Sigma_{i\sigma(i)}\,.
\end{equation*}
To prove \eqref{eq:det_Sigma_expression} we will now show that many terms do not contribute to the sum.

From the explicit form of $\Sigma$, we can see that the permutations $\sigma$ such that $\sigma(i) \notin \{i-1,i,i+1\}$ for some $i$ have $\Sigma_{i\sigma(i)} = 0$,
recalling that we consider indices modulo $\ell$.

For the remaining terms, consider the cyclic decomposition of $\sigma = \tau_1 \cdots \tau_m$ where $\tau_j$ are disjoint cycles. Using the disjointness, we can compute the
product of $\Sigma_{i\sigma(i)}$ by evaluating it for each cycle.

The contribution of cycles of order $1$ is always $1$, since when $\tau=(i)$ we have that $\Sigma_{i\tau(i)} = \Sigma_{ii} = 1$.

The contribution of a cycle of order $2$, say $\tau=(i\ j)$, is non-zero if, and only if, $i$ and $j$ are consecutive.  If $i=j-1$ then $\Sigma_{j-1,j} \Sigma_{j,j-1} = \gamma_j^2$.

By a similar reasoning, for a cycle $\tau = (i_1\ i_2\ \cdots\ i_o)$ of order
$o \ge 3$ to have a non-zero contribution one must have $o=\ell$. Indeed, we
have $i_2 \in \{i_1 - 1, i_1 + 1\}$. If $i_2 = i_1 + 1$, since all indices must
be consecutive we have $i_j = i_{j-1}+1$ for every $j$. After $i_o$ the cycle
returns to $i_1$, implying that it has length $\ell$. The case $i_2=i_1-1$ is
analogous.

Hence, apart from cycles of order $1$ and cycles of the form $(i\ i+1)$, the only
cycles whose product is non-zero are $(1\ 2\ \cdots\ \ell)$ and
$(1\ \ell\ \ell-1\ \cdots\ 2)$. Both have the same product:
\begin{equation*}
\prod_{i \in [\ell]} \Sigma_{i, i+1} \;=\; \prod_{i \in [\ell]} \gamma_i\,.
\end{equation*}
Finally, if $\sigma$ is a permutation different from the identity and the two
cycles of order $\ell$, in order for its product to be non-zero one must have a
cyclic decomposition $\sigma = \tau_1 \ldots \tau_t$ with every $\tau_i$ being
a cycle of order $1$ or $2$. As cycles of order $1$ contribute with $1$ to the
product, we can focus on the cycles of order $2$. Suppose there are $m$ cycles of
order $2$ and reorder if necessary so that they are given by $\tau_1, \ldots, \tau_m$
with $\tau_j = (i_j-1\ i_j)$ and $i_1 < i_2 < \cdots < i_m$. The formula
in~\eqref{eq:det_Sigma_expression} follows, since the $i_j$ are non-consecutive
by construction.
\end{proof}

Using Lemma~\ref{lema:det_Sigma_expression} we are able to verify that $\det
\Sigma$ is always zero.

\begin{lemma}
\label{lem:det_Sigma_is_zero}
Let $\Sigma$ be as in \eqref{eq:covariancematrixCLT} with coefficients $\gamma_k=\gamma_k(\infty)$ as in \eqref{eq:gammakfrakf}. Then $\det \Sigma = 0$.
\end{lemma}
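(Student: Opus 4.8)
The plan is to exhibit an explicit factorization $\Sigma = B^{\mathsf T}B$ with $B$ manifestly singular; then $\det\Sigma = (\det B)^2 = 0$ immediately, and as a bonus $\ker\Sigma = \ker B$ becomes explicit, which is convenient for the later analysis of the Gaussian probability. First I would rewrite the coefficients symmetrically: put $t_k \coloneqq \f_k/(\f_k+\f_{k+1}) \in (0,1)$ for $k = 1,\dots,\ell$, with indices read cyclically (so $t_\ell = \f_\ell/(\f_\ell+\f_1)$). By Proposition~\ref{prop:propertiescoeffgammak}(i) we have $\gamma_k^2 = t_{k-1}(1-t_k)$, and since $\gamma_k < 0$ by Proposition~\ref{prop:propertiescoeffgammak}(ii), in fact $\gamma_k = -\sqrt{t_{k-1}}\sqrt{1-t_k}$. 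The single arithmetic fact about the $t_k$ that the argument uses is the cyclic identity
\[
\prod_{k=1}^\ell t_k \;=\; \frac{\f_1\f_2\cdots\f_\ell}{(\f_1+\f_2)(\f_2+\f_3)\cdots(\f_\ell+\f_1)} \;=\; \prod_{k=1}^\ell (1-t_k),
\]
where the last equality holds because $\prod_k \f_{k+1} = \prod_k \f_k$.

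Next I would introduce the cyclic bidiagonal matrix $B = (B_{ij})_{i,j=1}^\ell$ with $B_{kk} = \sqrt{t_k}$ and $B_{k,k+1} = -\sqrt{1-t_{k+1}}$ (indices cyclic, so $B_{\ell,1} = -\sqrt{1-t_1}$), all other entries being $0$, and check that $B^{\mathsf T}B = \Sigma$. The $k$-th column of $B$ has exactly two nonzero entries, namely $-\sqrt{1-t_k}$ in row $k-1$ and $\sqrt{t_k}$ in row $k$; hence $(B^{\mathsf T}B)_{kk} = (1-t_k) + t_k = 1$, consecutive columns $k, k+1$ overlap only in row $k$, so $(B^{\mathsf T}B)_{k,k+1} = \sqrt{t_k}\,(-\sqrt{1-t_{k+1}}) = -\sqrt{t_k(1-t_{k+1})} = \gamma_{k+1}$, and non-adjacent columns are orthogonal (for $\ell = 3$ one verifies the wrap-around entry separately, with the same outcome). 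Finally, in the permutation expansion of $\det B$ only the identity permutation and the full $\ell$-cycle contribute, so
\[
\det B \;=\; \prod_{k=1}^\ell \sqrt{t_k} \;+\; (-1)^{\ell-1}\prod_{k=1}^\ell\bigl(-\sqrt{1-t_{k+1}}\bigr) \;=\; \prod_{k=1}^\ell\sqrt{t_k} - \prod_{k=1}^\ell\sqrt{1-t_k} \;=\; 0
\]
by the cyclic identity above (both products being positive). Therefore $\det\Sigma = (\det B)^2 = 0$.

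The only genuine idea is guessing the entries of $B$; the rest is bookkeeping, and I do not anticipate a real obstacle, only the need to be careful with signs and with the reindexing $\prod_k\sqrt{1-t_{k+1}} = \prod_k\sqrt{1-t_k}$ and the $(-1)^{\ell-1}$ sign of the $\ell$-cycle. One could instead stay within Lemma~\ref{lema:det_Sigma_expression}: substituting $\gamma_k^2 = t_{k-1}(1-t_k)$ and $\prod_k\gamma_k = (-1)^\ell\prod_k t_k$ into \eqref{eq:det_Sigma_expression} turns the alternating sum over sets of non-consecutive indices into a transfer-matrix trace equal to $\prod_k t_k + \prod_k(1-t_k)$, and the same cyclic identity closes the proof; the main work there is the transfer-matrix evaluation. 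I would favor the factorization route since it is shorter and additionally shows $\ker\Sigma = \ker B$ is one-dimensional, the recursion $\sqrt{t_k}\,v_k = \sqrt{1-t_{k+1}}\,v_{k+1}$ being consistent around the cycle precisely because $\prod_k\sqrt{t_k/(1-t_{k+1})} = 1$.
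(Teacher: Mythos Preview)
Your proof is correct and takes a genuinely different route from the paper's. The paper starts from the expansion of $\det\Sigma$ in Lemma~\ref{lema:det_Sigma_expression} and then reinterprets the alternating sum over non-consecutive index sets probabilistically: defining independent events $A_k$ with $\bb P(A_k)=t_k$ (your $t_k$ is exactly the paper's $a_k$) and $B_k=A_k\cap A_{k+1}^c$, it computes $\bb P(\cup_k B_k)$ twice---once by inclusion--exclusion, once directly---and the equality of the two expressions forces the right-hand side of \eqref{eq:det_Sigma_expression} to vanish. Your approach bypasses Lemma~\ref{lema:det_Sigma_expression} entirely by exhibiting the explicit factorization $\Sigma=B^{\mathsf T}B$ with $B$ cyclic bidiagonal and $\det B=0$.

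Each approach has its merits. The paper's probabilistic device is reused later (the same events $A_k,B_k$ drive the proof of Lemma~\ref{lema:seq_Pk}, which controls the sequence $P_k$ needed for Proposition~\ref{prop:h2_eigenspace_zero}), so there is some economy in introducing it here. Your factorization is shorter and, as you point out, yields more in one stroke: from $\Sigma=B^{\mathsf T}B$ one gets $\ker\Sigma=\ker B$, and the recursion $\sqrt{t_k}\,v_k=\sqrt{1-t_{k+1}}\,v_{k+1}$ immediately shows the kernel is one-dimensional and spanned by a strictly positive vector. This is precisely the content of Proposition~\ref{prop:h2_eigenspace_zero}, which the paper establishes by a separate (and longer) argument. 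So your route would allow collapsing Lemma~\ref{lem:det_Sigma_is_zero}, Lemma~\ref{lema:seq_Pk}, and Proposition~\ref{prop:h2_eigenspace_zero} into a single short computation.
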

\begin{proof}
We have to prove that the right-hand side of~\eqref{eq:det_Sigma_expression} is
zero. We will replace the expressions for $\gamma_k=\gamma_k(\infty)$ given by Proposition~\ref{prop:propertiescoeffgammak}--{\it (i)}, {\it (iv)} and verify that the right-hand side of \eqref{eq:det_Sigma_expression} vanishes. In order to make the computation more streamlined, it is convenient to reinterpret it as an estimate of probabilities, as we describe below.

Let us define $a_k \coloneqq \frac{\f_{k-1}}{\f_{k-1}+f_k} \in (0,1)$, which satisfies
$1 - a_{k+1} = \frac{\f_{k+1}}{\f_{k}+\f_{k+1}}$. Consider a collection
$(U_j: j \in [\ell])$ of i.i.d. uniform random variables in $(0,1)$ and set
\begin{equation}
\label{eq:def_event_Ak}
    A_k \;\coloneqq\; \{U_k \le a_k\}\,.
\end{equation}
The collection $(A_k: k \in [\ell])$ consists of mutually independent events such that $\bb P(A_k) = a_k$.  Defining $B_k \coloneqq A_k \cap A_{k+1}^{c}$, it holds that
\begin{equation}
\label{eq:prob_Bk}
\bb P(B_k)\; = \; a_k (1 - a_{k+1}) \;=\; \gamma_k^2\,.
\end{equation}
Let us compute the probability of the event $\cup_k B_k$ in two different ways.
By the inclusion-exclusion principle, we have
\begin{align}
\bb P(\cup B_k)
    &\;=\; \sum_{m=1}^{\ell} (-1)^{m-1} \sum_{\mathclap{i_1 < i_2 < \cdots < i_m}}
        \ \bb P(B_{i_1} \cap \cdots \cap B_{i_m}) \;=\; \sum_{m=1}^{\ell} (-1)^{m-1} \sum_{\mathclap{\substack{ i_1 < i_2 < \cdots
    < i_m \\ i_j\text{ non-consecutive} }}}
        \ \bb P(B_{i_1} \cap \ldots \cap B_{i_m}) \nonumber\\
\label{eq:prob_cup_Bk_1}
    &\;=\; \sum_{m=1}^{\ell} (-1)^{m-1} \sum_{\mathclap{\substack{ i_1 < i_2 < \ldots
    < i_m \\ \text{$i_j$ non-consecutive} }}}
        \ \gamma^2_{i_1} \ldots \gamma^2_{i_m}\,,
\end{align}
where the second equality is due to
$B_k \cap B_{k+1} = (A_k \cap A^{c}_{k+1}) \cap (A_{k+1} \cap A^{c}_{k+2}) =\varnothing$,
and the last equality follows from~\eqref{eq:prob_Bk} and independence.
On the other hand, from the identity
\begin{equation*}
    \cap B^{c}_k \;=\; (\cap_{k\in [\ell]} A_k) \cup (\cap_{k\in [\ell]} A^c_k)
\end{equation*}
we compute
\begin{align}
\bb P(\cup B_k)
    &\;=\; 1 - \bb P(\cap B^c_k)
    \;=\; 1 - \bb P(\cap A^c_k) -  \bb P(\cap A_k)\nonumber\\
    &\;=\; 1 - a_1 \cdots a_\ell - (1 - a_1) \cdots (1 - a_\ell) \nonumber\\
    &\;=\; 1 - 2 \prod \frac{\f_k}{\f_k + \f_{k+1}}     = 1 - 2 (-1)^\ell \prod \gamma_k\,.\label{eq:prob_cup_Bk_2}
\end{align}
Equating~\eqref{eq:prob_cup_Bk_1} and \eqref{eq:prob_cup_Bk_2} the result
follows.
\end{proof}

Lemma~\ref{lem:det_Sigma_is_zero} ensures that zero is an eigenvalue of $\Sigma$, and we now collect some info about the associated eigenspace.

\begin{proposition}
\label{prop:h2_eigenspace_zero}
Let $\Sigma$ be as in \eqref{eq:covariancematrixCLT} with coefficients $\gamma_k=\gamma_k(\infty)$ as in \eqref{eq:gammakfrakf}. Then $0$ is an eigenvalue of $\Sigma$, its eigenspace has dimension 1 and is generated by a vector $x \in (0,\infty)^{\ell}$.
\end{proposition}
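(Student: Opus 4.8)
The plan is to factor $\Sigma$ as $\Sigma = B^{T}B$, where $B$ is an explicit weighted oriented incidence matrix of the cycle on $\ell$ vertices; then $\ker\Sigma = \ker B$, and $\ker B$ is immediately seen to be at most one–dimensional and spanned by a positive vector. Throughout I use the quantities $a_k \coloneqq \f_{k-1}/(\f_{k-1}+\f_k)\in(0,1)$ already introduced in the proof of Lemma~\ref{lem:det_Sigma_is_zero}, for which $1-a_{k+1} = \f_{k+1}/(\f_k+\f_{k+1})$ and, by~\eqref{eq:gammakfrakf} together with Proposition~\ref{prop:propertiescoeffgammak}--(i),(ii), $\gamma_k = -\sqrt{a_k(1-a_{k+1})}$. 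I also record the identity $\prod_{k\in[\ell]} a_k = \prod_{k\in[\ell]}(1-a_k)$, valid since both products equal $\prod_k \f_k/\prod_k(\f_{k-1}+\f_k)$.

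First I would let $B$ be the $\ell\times\ell$ matrix whose $k$-th row has entry $\sqrt{a_{k+1}}$ in column $k$ and $-\sqrt{1-a_{k+2}}$ in column $k+1$ (indices taken cyclically modulo $\ell$), and $0$ elsewhere. A direct computation of $B^{T}B$ then gives: the $j$-th diagonal entry is $a_{j+1}+(1-a_{j+1}) = 1$; the $(j,j+1)$ entry is $\sqrt{a_{j+1}}\cdot(-\sqrt{1-a_{j+2}}) = -\sqrt{a_{j+1}(1-a_{j+2})} = \gamma_{j+1}$ (using $\gamma_{j+1}\le 0$); and all remaining entries vanish (for $\ell=3$ there are none), since columns $j$ and $j'$ of $B$ have disjoint supports whenever $j,j'$ are non-adjacent in the cycle. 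Hence $B^{T}B = \Sigma$. In particular $\Sigma$ is symmetric positive semidefinite with $x^{T}\Sigma x = |Bx|^{2}$, so $\Sigma x = 0$ if and only if $Bx = 0$; that is, $\ker\Sigma = \ker B$.

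It then remains to compute $\ker B$. The $k$-th coordinate of $Bx = 0$ reads $\sqrt{a_{k+1}}\,x_k = \sqrt{1-a_{k+2}}\,x_{k+1}$, i.e. $x_{k+1} = \sqrt{a_{k+1}/(1-a_{k+2})}\,x_k$; hence every element of $\ker B$ is determined by its first coordinate, so $\dim\ker B\le 1$. Setting $x_1 = 1$ and iterating the recursion around the cycle is consistent precisely because $\prod_{k\in[\ell]} a_{k+1}/(1-a_{k+2}) = \prod_j a_j/\prod_j(1-a_j) = 1$, and the resulting nonzero vector $x$ has all coordinates strictly positive. Therefore $\ker\Sigma$ is one-dimensional and generated by a vector $x\in(0,\infty)^{\ell}$; that $0$ is an eigenvalue is then immediate (and is also already recorded in Lemma~\ref{lem:det_Sigma_is_zero}).

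The one genuinely creative step is producing the factorization $\Sigma = B^{T}B$ with $B$ supported on the edges of the cycle; once it is in hand, the verification and the kernel computation are routine bookkeeping. Without it one would instead work directly with the three-term periodic recurrence $\gamma_k x_{k-1} + x_k + \gamma_{k+1}x_{k+1} = 0$ and argue carefully that its space of periodic solutions is exactly one-dimensional and contains a positive vector, which is doable but noticeably more delicate.
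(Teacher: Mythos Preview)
Your proof is correct and takes a genuinely different route from the paper. The paper works directly with the three-term periodic recurrence $\gamma_k x_{k-1} + x_k + \gamma_{k+1} x_{k+1} = 0$, eliminates variables to obtain a relation $\gamma_\ell P_{\ell-2} x_\ell = \bigl((-1)^\ell \gamma_1\cdots\gamma_\ell - P_{\ell-1}\bigr) x_{\ell-1}$ involving an auxiliary sequence $P_k = P_{k-1} - \gamma_k^2 P_{k-2}$, and then proves the chain of inequalities $1=P_0>P_1>\cdots>P_\ell>0$ via the same probabilistic reinterpretation with the events $A_k,B_k$ used in Lemma~\ref{lem:det_Sigma_is_zero}; this yields that consecutive coordinates share a sign, and one-dimensionality is deduced at the end from orthogonality of eigenvectors. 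Your factorization $\Sigma=B^TB$ short-circuits all of this: it converts the three-term recurrence into the two-term one $x_{k+1}=\sqrt{a_{k+1}/(1-a_{k+2})}\,x_k$, from which one-dimensionality and positivity are immediate, and the cycle consistency condition reduces to the transparent identity $\prod_k a_k=\prod_k(1-a_k)$. In effect you have found an explicit Cholesky-type decomposition of $\Sigma$ as the Gram matrix of a weighted oriented incidence matrix of the $\ell$-cycle, which also re-proves $\det\Sigma=0$ without the inclusion--exclusion argument. The paper's approach has the mild advantage of yielding the monotone sequence $(P_k)$ and its probabilistic meaning, which might be of independent interest; your approach is shorter, more explicit (it gives the eigenvector in closed form), and conceptually cleaner.
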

\begin{proof}
Let $x=(x_1, \dots, x_\ell)$ be a non-zero vector satisfying $\Sigma x = 0$.
Then, for every $k \in [\ell]$ we have
\begin{equation}
\label{eq:eigen_0_Lk} \tag{$L_k$}
\gamma_{k} x_{k-1} + x_{k}+\gamma_{k+1} x_{k+1} \;=\;0\,.
\end{equation}

It is possible to solve the system of equations above explicitly, but the formulas obtained this way are cumbersome. Instead, we show that if some coordinate $x_j$ is positive then $x_{j+1}$ is positive as well. Since we can always choose the sign of one entry of $x$, by the cyclic symmetry of the problem we then conclude that there is $x \in (0,\infty)^{\ell}$ with $\Sigma x = 0$, as wanted.
Therefore, assume without loss of generality that $x_{\ell-1}\geq 0$. From
$(L_1)-\gamma_1(L_\ell)$, we obtain
\begin{equation*}
 -\gamma_1\gamma_\ell x_{\ell-1} +(1-\gamma_1^2)x_1+\gamma_2 x_2\;=\;0\,.
\end{equation*}
Defining $P_0 := 1$ and $P_1 := 1 - \gamma_1^2$, the equation above becomes
\begin{equation}
 -\gamma_1\gamma_\ell x_{\ell-1} +P_1 x_1+\gamma_2 P_0 x_2\;=\;0 \tag{$L'_1$}\,.
\end{equation}
Equation $(L'_1)$ relates $x_{\ell-1}$ to $x_1$ and $x_2$. By successive applications
of the same reasoning we can relate $x_{\ell-1}$ to $x_k$ and $x_{k+1}$ for any
$k$. Indeed, suppose that it holds
\begin{equation}
\tag{$L'_k$}
(-1)^k\gamma_1\dots\gamma_k\gamma_\ell x_{\ell-1}+P_k x_k+\gamma_{k+1}
    P_{k-1}x_{k+1}=0,
\end{equation}
for some already defined $P_{k-1}$ and $P_k$.
Then, from $P_k(L_{k+1})-\gamma_{k+1}(L'_k)$, we obtain
\begin{align*}
    &\bigl(P_k \gamma_{k+1} x_{k}
    + P_k x_{k+1}+P_k \gamma_{k+2} x_{k+2}\bigr) \\
    &\qquad - \bigl(\gamma_{k+1}(-1)^k\gamma_1\dots\gamma_{k}\gamma_\ell x_{\ell-1}
        + \gamma_{k+1} P_k x_k+\gamma_{k+1}^2 P_{k-1}x_{k+1}\bigr) \\
    &= (-1)^{k+1}\gamma_1\dots\gamma_{k+1}\gamma_\ell x_{\ell-1}+
    (P_k -\gamma_{k+1}^2 P_{k-1})x_{k+1}+\gamma_{k+2} P_{k}x_{k+2}
\;=\; 0\,.
\end{align*}
Defining $P_{k+1} \coloneqq P_k - \gamma_{k+1}^2 P_{k-1}$ for $k\leq \ell-2$, we conclude that $(L'_{k+1})$
also holds. Since we know that $(L'_1)$ holds, it follows by induction that
$(L'_{\ell-2})$ holds, implying that
\begin{align*}
0 \;=\; (-1)^{\ell-1}\gamma_1\dots\gamma_\ell x_{\ell-1}+
    P_{\ell-1} x_{\ell-1}+\gamma_{\ell} P_{\ell-2}x_{\ell}\,,
\end{align*}
that is,
\begin{align*}
\gamma_{\ell} P_{\ell-2}x_{\ell}
\;=\; \bigl((-1)^{\ell}\gamma_1\dots\gamma_\ell - P_{\ell-1}\bigr) x_{\ell-1}\,.
\end{align*}
To finish the proof, we need to control the sign of the coefficients appearing
above. Once again, the strategy of expressing relevant quantities using the
independent events $A_k$ plays a role.

\begin{lemma}
\label{lema:seq_Pk}
Define 
$$P_0 \coloneqq 1,\quad  P_1\coloneqq 1 - \gamma^2_1,\quad  P_k \coloneqq P_{k-1} - \gamma_k^2
P_{k-2}, \; 2\leq k\leq \ell-1,$$
and
$$P_\ell\coloneqq 2(-1)^\ell \gamma_1\cdots \gamma_\ell.
$$ 
Then
\begin{equation}
\label{eq:seq_Pk}
1    \;=\; P_0 \;>\; P_1 \;>\; P_2 \;>\; \cdots \;>\; P_{\ell-1} > P_{\ell}
    \;=\; 2(-1)^{\ell} \gamma_1 \cdots \gamma_\ell
    \;>\;0\,.
\end{equation}
\end{lemma}

\begin{proof}
Recall $A_k$ as defined in~\eqref{eq:def_event_Ak} and $B_k = A_k \cap
A_{k+1}^c$. We simply notice that the sequence $P_k$ in the statement can be
alternatively described by the equation
\begin{equation}\label{eq:seq_Pk_as_union}
    P_k \;=\; 1 - \bb P \Bigl(\bigcup_{j=1}^k B_j\Bigr),\quad k\leq \ell-1.
\end{equation}
Indeed, it is straightforward to check~\eqref{eq:seq_Pk_as_union} for $k=0,1$.
Now, suppose that~\eqref{eq:seq_Pk_as_union} holds for $k-1$. Then
\begin{align*}
1 - \bb P\Bigl( \bigcup_{j=1}^k B_j\Bigr)
    \;=\; 1 - \bb P\Bigl( \bigcup_{j=1}^{k-1} B_j\Bigr) - \bb P (B_k)
    + \bb P\Bigl( B_k \cap \bigcup_{j=1}^{k-1} B_j\Bigr).
\end{align*}
Since $B_k \cap B_{k-1} = \varnothing$, the intersection above is given by
\begin{equation*}
\bb P\Bigl( B_k \cap \bigcup_{j=1}^{k-1} B_j\Bigr)
\;=\; \bb P\Bigl( B_k \cap \bigcup_{j=1}^{k-2} B_j\Bigr)
    \;=\; \bb P(B_k) \bb P\Bigl(\bigcup_{j=1}^{k-2} B_j\Bigr)
    \;=\; \gamma_k^2 (1 - P_{k-2})\,,
\end{equation*}
where in the second identity we used that $B_k=A_k\cap A_{k+1}^c$ and $\cup_{j\leq k-1} B_j=\cup_{j\leq k-2}(A_j\cap A_{j+1}^c)$ are mutually independent, because the $A_j$'s are.
Putting together the equations above, we obtain
\begin{equation*}
1 - \bb P\Bigl( \bigcup_{j=1}^kB_j\Bigr)
    \;=\; P_{k-1} - \gamma_k^2 + \gamma_k^2 (1-P_{k-2})
    \;=\; P_{k-1} - \gamma_k^2 P_{k-2}
    \;=\; P_k\,,
\end{equation*}
completing the induction step.
The inequalities in~\eqref{eq:seq_Pk} are now evident from the fact that
the sequence of events $\cup_{j=1}^k B_j$ is increasing in $k$ and that
we already know $P_{\ell} = \bb P(\cap B_j^c) = 2 (-1)^{\ell} \gamma_1 \dots
    \gamma_{\ell} > 0$, see~\eqref{eq:prob_cup_Bk_2}.
\end{proof}

With Lemma~\ref{lema:seq_Pk}, we can finish the proof by noticing that
\begin{equation*}
\gamma_\ell P_{\ell-2} \;<\;0
\quad \text{and} \quad
(-1)^{\ell}\gamma_1\dots\gamma_\ell - P_{\ell-1}
\;=\; \frac{1}{2}P_{\ell} - P_{\ell-1} \;<\; 0\,,
\end{equation*}
which imply that $x_{\ell-1}$ and $x_{\ell}$ have the same sign. The reasoning
above actually shows that any eigenvector of zero with some positive entry is in $(0,\infty)^\ell$. Finally, we argue that the eigenspace of zero
has dimension 1. The Spectral Theorem ensures that $\Sigma$ has
an orthonormal basis of eigenvectors. Now, suppose that $v_1, v_2$ are two
orthogonal eigenvectors of zero. Replacing $v_j$ by $-v_j$ if needed, we can
assume $v_j \in (0, \infty)^\ell$ for $j=1,2$. But then their inner product is
positive, leading to a contradiction.
\end{proof}

The final result of this section is a consequence of the previous proposition, and it is the essential outcome of this section which will be used later.

\begin{theorem}\label{thm:probabilitieszero}
For $\ell\geq 3$, suppose that $X=(X_1,\hdots, X_\ell)$ is a centered Gaussian vector with covariance matrix $\Sigma$ as in \eqref{eq:covariancematrixCLT}, whose coefficients $\gamma_k=\gamma_k(\infty)$ are of the form \eqref{eq:gammakfrakf}. Then
$\bb P(X_j\geq 0,\;  j=1,\hdots, \ell)=0$.
\end{theorem}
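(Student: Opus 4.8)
The plan is to exploit Proposition~\ref{prop:h2_eigenspace_zero}, which already does the substantive work: for $\Sigma$ as in \eqref{eq:covariancematrixCLT} with coefficients \eqref{eq:gammakfrakf}, zero is an eigenvalue of $\Sigma$ whose eigenspace is one-dimensional and spanned by a vector $x=(x_1,\dots,x_\ell)\in(0,\infty)^\ell$ with \emph{strictly positive} entries. The first step is to translate this into a degeneracy statement for the Gaussian vector $X$: since $\Sigma x = 0$, the random variable $\langle X,x\rangle = \sum_{j=1}^\ell x_j X_j$ is centered with variance $x^\top \Sigma x = 0$, hence $\sum_{j=1}^\ell x_j X_j = 0$ almost surely.

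The second step is to restrict to the event $\{X_j\geq 0,\ j\in[\ell]\}$. On this event each summand $x_j X_j$ in the identity $\sum_j x_j X_j = 0$ is nonnegative, and since every $x_j>0$ this forces $X_j=0$ for all $j$. Therefore, up to a $\bb P$-null set, $\{X_j\geq 0,\ j\in[\ell]\}\subseteq\{X=0\}$. The final step is to observe that the diagonal entries of $\Sigma$ are equal to $1$, so $\var(X_1)=1>0$ and consequently $\bb P(X_1=0)=0$; a fortiori $\bb P(X=0)=0$, which gives $\bb P(X_j\geq 0,\ j\in[\ell])=0$.

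I do not expect any genuine obstacle: the entire difficulty is encapsulated in Proposition~\ref{prop:h2_eigenspace_zero} (in particular, the fact that the null eigenvector can be taken with all coordinates of the same sign), and once that is granted the argument is elementary. The only points requiring a little care are that the almost-sure relation $\langle X,x\rangle=0$ should be justified via the variance computation $x^\top\Sigma x=0$ rather than by any further structural property of $\Sigma$, and that it is the \emph{strict} positivity of the $x_j$ (not mere nonnegativity) that upgrades ``$X_j\ge 0$ for all $j$ and $\sum_j x_j X_j=0$'' to ``$X_j=0$ for all $j$''.
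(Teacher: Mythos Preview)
Your proposal is correct and follows essentially the same approach as the paper: both invoke Proposition~\ref{prop:h2_eigenspace_zero} to obtain a null eigenvector $x\in(0,\infty)^\ell$, deduce that $X$ lies almost surely in the hyperplane $\{y:\langle y,x\rangle=0\}$, and then observe that this hyperplane meets the nonnegative orthant only at the origin, which has probability zero. Your version is actually a touch more explicit than the paper's (which phrases the degeneracy via the support characterization $\mathrm{supp}(X)=\mathrm{Ker}(\Sigma)^\perp$ and leaves the final ``$\bb P(X=0)=0$'' step implicit), so no changes are needed.
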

\begin{proof}
Recall that the support of a Gaussian vector $Z$ is given by $\bb E(Z)+\mathrm{Ker}(\cov{Z})^\perp$. Thus, in our case the support of $X$ is $\mathrm{Ker}(\Sigma)^\perp$, and by Proposition~\ref{prop:h2_eigenspace_zero}, $\mathrm{Ker}(\Sigma)$ is spanned by a vector $v=(v_1,\hdots,v_\ell)$ with $v_j>0$ for every $j$. If $y=(y_1,\hdots,y_\ell)$ is such that $y_j>0$ for every $j$, then we must have $\langle y,v\rangle >0$, so $y\notin \mathrm{Ker}(\Sigma)^\perp$. Thus,
    $$
        \mathrm{supp}(X)\cap \{y\in \bb R^d: y_j>0, j=1,\hdots, \ell\}\;=\;\varnothing\,,
    $$
and the result follows.
\end{proof}

\section{Proofs of Theorems~\ref{thm:intransitive} and \ref{thm:nointransitive}}\label{s:thms5_and_6}

Theorem~\ref{teo:CLT} will be proved in the next section. In this section we assume its validity in order to prove Theorems~\ref{thm:intransitive} and \ref{thm:nointransitive}.

\begin{proof}[Proof of Theorem~\ref{thm:intransitive}]
We will prove the following two properties
\begin{enumerate}[(1)]
\item 
If condition \eqref{eq:asymptoticnotie} holds true, and in addition there is a function $r(m)$ with
$\lim_{m\to\infty}r(m)=+\infty$, for which
\begin{equation}\label{eq:condttslowgrowthpk_limsup}
\frac{1}{2}-\mb p_k-\frac{1}{2}\bb P(D_1^{(k)}=D_1^{(k+1)})
\;\geq\; -\frac{1}{m^{1/2}r(m)}\,,\quad k=1,\hdots, \ell,
\end{equation}
then
\begin{equation}\label{eq:intransitivelimit_limsup}
\limsup_{m\to \infty}\,\bb P\left({D^{(1)} \btt \cdots \btt D^{(\ell)} \btt D^{(1)}}\right)\;\leq\; \bb P\left(X_j\geq 0, j=1,\hdots, \ell \right).
\end{equation}

\item
If condition \eqref{eq:asymptoticnotie} holds true, and in addition there is a function $r(m)$ with
$\lim_{m\to\infty}r(m)=+\infty$, for which
\begin{equation}\label{eq:condttslowgrowthpk_liminf}
\frac{1}{2}-\mb p_k-\frac{1}{2}\bb P(D_1^{(k)}=D_1^{(k+1)})
\;\leq\; \frac{1}{m^{1/2}r(m)}\,,\quad k=1,\hdots, \ell,
\end{equation}
then
\begin{equation}\label{eq:intransitivelimit_liminf}
\liminf_{m\to \infty}\,\bb P({D^{(1)} \btt \cdots \btt D^{(\ell)} \btt D^{(1)}})
    \;\geq\; \bb P\left(X_j> 0, j=1,\hdots, \ell \right).
\end{equation}
\end{enumerate}

When we combine \eqref{eq:condttslowgrowthpk_limsup} and \eqref{eq:condttslowgrowthpk_liminf}, we obtain that condition \eqref{eq:condttslowgrowthpk_lim} holds true, and then a combination of \eqref{eq:intransitivelimit_limsup} and \eqref{eq:intransitivelimit_liminf} yield \eqref{eq:intransitivelimit_ineq}.

Recall that the mean and variance of the $N_k$'s were computed in Lemma~\ref{lema:mean_var_Nk}, the quantities $f_k=f_k(m)$ and $\mb p_k=\mb p_k(m)$ are as in \eqref{deff:fkm} and \eqref{eq:defpk}, and for $k=1,\hdots, \ell$ denote
$$
v_k\;=\;v_k(m)\;\coloneqq\; \frac{1}{m^{3/2}}\var{N_k}^{1/2}\;=\;\sigma_k(\infty)(1+o(1))\,, \quad m\to\infty\,,
$$
so that $\widetilde N_k$ from \eqref{eq:def_tilde_Nk} reads as
$$
\widetilde N_k\;=\;\frac{N_k-m^2f_kf_{k+1}\mb p_k}{m^{3/2}v_k}\,,\quad k=1,\hdots,\ell.
$$
In an analogous way, and with Lemma~\ref{lem:meanvarEk} in mind, introduce the normalized version $\widetilde E_k$ of $E_k$ from \eqref{eq:def_Ek}, namely
$$
\widetilde E_k\;\coloneqq\; \frac{E_k-\bb E(E_k)}{m^{3/2}v_k^=}\,,\quad k=1,\hdots, \ell,
$$
with
\begin{equation}\label{eq:varEkdecayfinal}
v_k^=\;=\;v_k^=(m)\;\coloneqq\; \frac{1}{m^{3/2}}\var{E_k}^{1/2}\;=\;o(1)\,,
\end{equation}
where the last identity is valid thanks to Lemma~\ref{lem:notiemeanvariance}.
Finally, introduce the events
\begin{align*}
A_k & \;\coloneqq\; \left\{ \widetilde N_k>\frac{f_kf_{k+1}m^2}{m^{3/2}v_k}\left(\frac{1}{2}-\mb p_k\right)-\frac{1}{2m^{3/2}v_k}E_k \right\} \\
& \;=\; \left\{ \widetilde N_k>\frac{m^{1/2}f_kf_{k+1}}{v_k}\left(\frac{1}{2}-\mb p_k-\frac{1}{2}\mb p_k^=\right)-\frac{v_k^=}{2v_k}\widetilde E_k \right\}.
\end{align*}

These notations were introduced so that the identity \eqref{eq:ineqNkEk} writes simply as
$$
\bb P(D^{(1)}\btt \cdots \btt D^{(\ell)}\btt D^{(1)})\;=\;\bb P\left(A\right), \quad \text{where}\quad A\;\coloneqq\; \bigcap_{k=1}^\ell A_k\,.
$$
If we were to set $\widetilde E_k=0$, then the probability $\bb P(A)$ would be already suited for a direct application of Theorem \ref{teo:CLT}. However, in the general case that we are considering here, we need to estimate the possible contributions from the $E_k$'s in a more careful manner.

To that end, let us fix $\varepsilon>0$ and consider the events
\begin{align*}
& B_k(\varepsilon)\coloneqq \left\{\frac{v_k^=}{2v_k}|\widetilde E_k|>\varepsilon\right\}, \quad k=1,\hdots, \ell,\quad B(\varepsilon)\coloneqq \bigcup_{k=1}^\ell B_k(\varepsilon),\\
& C_k(\varepsilon)\coloneqq B_k(\varepsilon)^c = \left\{\frac{v_k^=}{2v_k}|\widetilde E_k|\leq \varepsilon\right\}, \quad k=1,\hdots, \ell,\quad C(\varepsilon)\coloneqq \bigcap_{k=1}^\ell C_k(\varepsilon)=B(\varepsilon)^c,
\end{align*}
and write
\begin{equation}\label{eq:PABC}
\bb P(A)\;=\;\bb P(A\cap B(\varepsilon))+\bb P (A\cap C(\varepsilon))\,.
\end{equation}
Given any $\varepsilon>0$, a simple union bound combined with Chebyshev's inequality gives
\begin{equation}\label{eq:Bepsilonzero}
\bb P(A\cap B(\varepsilon))\;\leq\; \bb P(B(\varepsilon)) \;\leq\; \frac{1}{4\varepsilon^2}\sum_{k=1}^\ell \left(\frac{v_k^=}{v_k}\right)^2.
\end{equation}
Thanks to \eqref{eq:varEkdecayfinal}, we thus conclude that
\begin{equation}\label{eq:estPABepsilon}
\bb P(A\cap B(\varepsilon))\stackrel{m\to\infty}{\longrightarrow} 0\,,\quad \text{for any } \varepsilon>0 \text{ fixed}.
\end{equation}
To handle the second term in the right-hand side of \eqref{eq:PABC}, for $t\in \bb R$ we introduce yet another event $D_k(t)$, namely
$$
D_k(t)\;\coloneqq\;
\left\{
\widetilde N_k>\frac{f_kf_{k+1}m^{1/2}}{v_k}\left(\frac{1}{2}-\mb p_k-\frac{1}{2}\mb p_k^=\right)-t
\right\}
,\; k=1,\hdots,\ell,\quad D(t)\;\coloneqq\; \bigcap_{k=1}^\ell D_k(t)\,.
$$
From the definition of $A_k,D_k(\varepsilon)$ and $C_k(\varepsilon)$, we obtain that
\begin{equation}\label{eq:inclusionACD}
A_k\cap C_k(\varepsilon)\;\subset\; D_k(\varepsilon)\cap C_k(\varepsilon), \quad k=1,\hdots, \ell.
\end{equation}
We now estimate the probability of the events on the right-hand side above. Conditioning, we compute
$$
\bb P(D(\varepsilon)\cap C(\varepsilon))\;=\;\bb  P(D(\varepsilon)\mid C(\varepsilon))\bb P(C(\varepsilon)) \;=\; \bb P(D(\varepsilon))-\bb P(D(\varepsilon)\mid C(\varepsilon)^c)\bb P(C(\varepsilon)^c)\,,
$$
and using that $C(\varepsilon)^c=B(\varepsilon)$ and \eqref{eq:Bepsilonzero}, we obtain
$$
\bb P(D(\varepsilon)\cap C(\varepsilon))\;=\;\bb P(D(\varepsilon))+o(1)\,,\quad \text{as }m\to \infty, \text{ for any } \varepsilon>0 \text{ fixed}.
$$
Finally, a combination of \eqref{eq:PABC}, \eqref{eq:estPABepsilon}, the inclusion \eqref{eq:inclusionACD} and this last estimate, we obtain that for any $\varepsilon>0$ fixed,
$$
\bb P(A)\;\leq\; \bb P(D(\varepsilon))+o(1),\quad \text{as }m\to\infty.
$$
Thus,
$$
\limsup_{m\to\infty} \bb P(A)\;\leq\; \limsup_{m\to\infty} \bb P(D(\varepsilon)),\quad \text{for any }\varepsilon>0.
$$
But from condition~\eqref{eq:condttslowgrowthpk_limsup} and Theorem~\ref{teo:CLT}, for any $\varepsilon>0$, the inequality
\begin{align*}
\limsup_{m\to\infty} \bb P(D(\varepsilon))
    &\;\leq\; \limsup_{m\to\infty} \bb P\Bigl(\widetilde N_k \geq -\frac{f_kf_{k+1}}{v_kr(m)}-\varepsilon ,\; k=1,\hdots, \ell \Bigr)  \\ 
    &\;\leq \; \bb P(X_k\geq -\varepsilon, \; k=1,\hdots, \ell)
\end{align*}
holds true, and the $\limsup$ estimate follows. The $\liminf$ estimate is
derived by a similar reasoning, since for every $\varepsilon>0$ it holds
\begin{equation*}
\bb P(A)
    \ge \bb P(A \cap C(\varepsilon))
    \ge \bb P(D(-\varepsilon) \cap C(\varepsilon))
    = \bb P(D(-\varepsilon)) + o(1),\quad \text{as $m \to \infty$}.
\end{equation*}
Hence, for every $\varepsilon>0$ we have
\begin{align*}
\liminf_{m\to\infty} \bb P(A)
    &\ge \liminf_{m\to\infty} \bb P(D(-\varepsilon))
    \ge \liminf_{m\to\infty}
    \bb P\Bigl(\widetilde N_k > \frac{f_kf_{k+1}}{v_kr(m)}+\varepsilon ,\;
    k=1,\hdots, \ell \Bigr)\\
    &\ge \bb P\Bigl(X_k > \varepsilon ,\;
    k=1,\hdots, \ell \Bigr),
\end{align*}
using condition~\eqref{eq:condttslowgrowthpk_liminf}. Taking $\varepsilon \downarrow 0$, the result follows.
\end{proof}

The proof of Theorem~\ref{thm:nointransitive} is now a simple consequence of a combination of our results.

\begin{proof}[Proof of Theorem~\ref{thm:nointransitive}]
Under the conditions of Theorem~\ref{thm:nointransitive}, we apply
    Theorem~\ref{thm:probabilitieszero} to conclude that the right-hand side of
    \eqref{eq:intransitivelimit_limsup} vanishes, and the proof is complete.
\end{proof}

\section{Proof of Theorem~\ref{teo:CLT}}\label{sec:proof_CLT}

We now move to the proof of the last standing Theorem~\ref{teo:CLT}. So during this section, $\{\mb D_m\}_m$ is a collection of $\ell$ independent random dice, each with number of faces $n_k=f_km$ satisfying Assumption~\ref{assumption:main}. Recall also that the random variables $\widetilde N_1(m),\hdots, \widetilde N_\ell(m)$ were introduced in \eqref{eq:def_Nk} and \eqref{eq:def_tilde_Nk}, they depend on the index $m$ of the sequence but we keep omitting this dependence and write $\widetilde N_k=\widetilde N_k(m)$. Likewise, the associated quantities $\mb p_k=\mb p_k(m), \mb q_k=\mb q_k(m), \mb r_k=\mb r_k(m), \mb s_k=\mb s_k(m),\sigma_k=\sigma_k(m), \gamma_k=\gamma_k(m)$, $k=1,\hdots, \ell$, were all defined by \eqref{eq:defpk}--\eqref{eq:def_gamma_k}; we also omit their dependence on $m$, and recall that they are instrumental in computing the leading terms in $\bb E(N_k), \var{N_k}$ and $\corr{N_{k-1},N_k}$ as in \eqref{eq:ENkvarNk}.

Thanks to Assumption~\ref{assumption:main} and Lemma~\ref{lema:mean_var_Nk}, we see that
\begin{equation}\label{eq:standarlization}
\widetilde N_k=\frac{N_k-n_kn_{k+1}\mb p_k}{m^{3/2}v_k}=\frac{N_k-m^2f_kf_{k+1}\mb p_k}{m^{3/2}v_k},\quad k=1,\hdots, \ell,
\end{equation}
with 
$$
\mb p_k=\mb p_k(\infty)+o(1),\quad v_k\coloneqq \frac{1}{m^{3/2}}\var{N_k}^{1/2}=\sigma_k(\infty)+o(1),\quad m\to \infty,\quad \sigma_k(\infty)>0.
$$
%

Our proof of the Central Limit Theorem will be based on the moment method, so for completeness we record here the moments of a general Gaussian random vector. For its statement, recall that
$$
n!! = \prod_{k=0}^{\lceil \frac{n}{2}-1\rceil}(n-2k)
$$
is the double factorial of a positive integer $n$, which is given by the product of all the positive integers up to $n$ that have the same parity as $n$.

\begin{proposition}
\label{prop:momentsmultinormal}
Let $X=(X_1,\cdots,X_\ell)^T\sim \mathcal N_{\ell}(0,\bm\Sigma)$ be a centered
Gaussian vector of size $\ell$ and covariance matrix $\bm \Sigma$ with rank
$r\geq 1$. Fix a column vector $\alpha=(\alpha_1,\cdots,\alpha_\ell)^T\in
\mathbb R^\ell$ for which $\alpha^T\bm \Sigma\alpha\neq 0$. Then
\begin{equation}\label{eq:momentsmultinormal}
\mathbb E\Bigl[ \Bigl(\sum_{j=1}^{\ell}\alpha_j X_j\Bigr)^{s}\Bigr]\;=\;
\begin{cases}
0, & \text{ if }s \text{ is odd}, \\
(\alpha^T \bm \Sigma \alpha)^{s/2} (s-1)!!, & \text{ if } s \text{ is even}.
\end{cases}
\end{equation}
\end{proposition}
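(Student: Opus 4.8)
The plan is to reduce the multivariate statement to the classical one-dimensional Gaussian moment formula. First I would set $Y \coloneqq \sum_{j=1}^\ell \alpha_j X_j = \alpha^T X$ and observe that $Y$ is a genuine (one-dimensional) Gaussian random variable. Indeed, since $X\sim\mathcal N_\ell(0,\bm\Sigma)$ its characteristic function is $\bb E[e^{\ii t^T X}] = \exp(-\tfrac12 t^T\bm\Sigma t)$ for $t\in\bb R^\ell$, and evaluating at $t = u\alpha$ with $u\in\bb R$ yields $\bb E[e^{\ii uY}] = \exp(-\tfrac12 u^2\,\alpha^T\bm\Sigma\alpha)$. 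Hence $Y\sim\mathcal N(0,v)$ with $v\coloneqq\alpha^T\bm\Sigma\alpha$; note that $v\ge 0$ automatically, being the variance of $Y$, and $v>0$ by hypothesis, so $Y$ is a non-degenerate centered Gaussian. The rank assumption on $\bm\Sigma$ plays no essential role here beyond ensuring that such an $\alpha$ exists.

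Next I would reduce to the standard normal: write $Y = \sqrt{v}\,Z$ with $Z\sim\mathcal N(0,1)$, so that $\bb E[Y^s] = v^{s/2}\,\bb E[Z^s]$. It then remains to recall the elementary fact that $\bb E[Z^s] = 0$ for $s$ odd, by the symmetry of the law of $Z$ under $Z\mapsto -Z$, and $\bb E[Z^s] = (s-1)!!$ for $s$ even. The latter follows, for instance, from the integration-by-parts recursion $\bb E[Z^s] = (s-1)\,\bb E[Z^{s-2}]$, valid because the standard Gaussian density $\varphi$ satisfies $\varphi'(z) = -z\varphi(z)$, together with $\bb E[Z^0]=1$; alternatively, one reads it off from the Taylor expansion of the moment generating function $\bb E[e^{tZ}] = e^{t^2/2} = \sum_{k\ge 0}\tfrac{t^{2k}}{2^k k!}$, which gives $\bb E[Z^{2k}] = \tfrac{(2k)!}{2^k k!} = (2k-1)!!$. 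Substituting back and using that $v^{s/2} = (\alpha^T\bm\Sigma\alpha)^{s/2}$ when $s$ is even gives exactly~\eqref{eq:momentsmultinormal}.

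I do not expect any genuine obstacle in this argument; the only point deserving a word of care is the degenerate case in which $\bm\Sigma$ has rank $r<\ell$, where $X$ is supported on a proper subspace of $\bb R^\ell$. This causes no trouble, however, precisely because the quantity of interest is the scalar $Y$, whose distribution is non-degenerate under the stated hypothesis $\alpha^T\bm\Sigma\alpha\ne 0$. If one wishes to sidestep the degenerate case altogether, one may instead write $\bm\Sigma = AA^T$ with $A$ an $\ell\times r$ matrix and represent $X = AW$ for a standard Gaussian vector $W$ of dimension $r$; then $Y = (A^T\alpha)^T W$ is a linear combination of independent standard normals whose coefficient vector has squared Euclidean norm $\lVert A^T\alpha\rVert^2 = \alpha^T AA^T\alpha = v$, and the same computation applies.
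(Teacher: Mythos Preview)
Your argument is correct and follows essentially the same route as the paper: reduce the linear combination $Y=\alpha^T X$ to a one-dimensional centered Gaussian with variance $\alpha^T\bm\Sigma\alpha$, then invoke the standard one-dimensional moment formula. The only cosmetic difference is that the paper verifies the Gaussianity of $Y$ via the Cholesky factorization $\bm\Sigma=\bm L\bm L^T$ and $X=\bm L Z$ (which is precisely the alternative you sketch in your final paragraph), whereas your primary argument reads it off from the characteristic function; both accomplish the same reduction.
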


\begin{proof}
The proof follows standard textbook arguments, we include it here for sake of completeness.
The matrix $\bm \Sigma$ is positive semi-definite, so it admits a Cholesky decomposition of the form
$$
\bm \Sigma\;=\;\bm L\bm L^T\,,
$$
where $\bm L$ is a real matrix of size $\ell\times r$ and $r$ is the rank of $\bm \Sigma$. At the level of the random variable $X$, it induces the identity
$$
X\;=\;\bm LZ\,,
$$
where $Z\sim \mathcal N_r(0,\bm I_r)$ is a normalized Gaussian vector of size $r$.
Now, set
$$
G\;=\;\frac{1}{(\alpha^T\bm\Sigma \alpha)^{1/2}} \alpha^T\bm L Z\,.
$$
Observe that $\alpha^T\Sigma \alpha>0$ so $G$ as above is well defined. In fact, $G$ is a linear combination of independent centered scalar Gaussian random variables, so $G$ is a centered Gaussian itself. Its variance is
$$
\mathbb E(G^2)\;=\;\mathbb E(GG^T)\;=\;\frac{1}{\alpha^T\bm \Sigma \alpha}\alpha^T\bm L\mathbb E[ZZ^T]\bm L^T\alpha\;=\;1\,.
$$
Hence $G$ is actually a standard Gaussian, so
$$
\mathbb E(G^s)\;=\;
\begin{cases}
0, & \text{ if }s \text{ is odd}, \\
(s-1)!!, & \text{ if } s \text{ is even}.
\end{cases}
$$
The proof is now completed by observing that the term inside the expectation on
    the left-hand side of \eqref{eq:momentsmultinormal} is $\left(\alpha
    X\alpha^T\right)^s =(\alpha^T \bm \Sigma \alpha)^{s/2} G^s.$
\end{proof}


From the Cramér--Wold Criterion, in order to prove Theorem~\ref{teo:CLT} it suffices to show that for any
$\alpha = (\alpha_1, \ldots, \alpha_\ell) \in \bb R^\ell$ we have
\begin{equation*}
\sum_{k=1}^{\ell} \alpha_k \widetilde{N}_k
    \;\overset{d}{\longrightarrow}\; \sum_{k=1}^\ell \alpha_k X_k \quad \text{as } m\to \infty,
\end{equation*}
where $X = (X_1, \ldots, X_\ell)^T \sim \mc N(0, \bm \Sigma)$ with $\bm
\Sigma$ as in Theorem~\ref{teo:CLT}.

To prove this, the method of moments will be used (see \cite[Theorem 3.12, page
109]{Durrett}), as the normal is a random variable uniquely determined by its
moments.  Thus, by Proposition~\ref{prop:momentsmultinormal}, we need to show
that for each $s\in \bb N$,
\begin{equation}
\label{eq:s_moment_tildeN}
\mathbb E\Bigl[ \Bigl(\sum_{k=1}^{\ell}\alpha_k \widetilde{N}_k\Bigr)^{s}\Bigr]\;\longrightarrow\;
\begin{cases}
0, & \text{ if }s \text{ is odd}, \\
(\alpha^T \bm \Sigma \alpha)^{s/2} (s-1)!!, & \text{ if } s \text{ is even},
\end{cases}
\end{equation}
as $m \to \infty$. 

The overall strategy we take is the following. The sum inside the expectation can be seen as a weighted sum over all pairs of dice faces that are being compared. We identify each term in this weighted sum with a sum over graphs with appropriate properties. This is done in Section~\ref{sec:sumtographs} below. 

Depending on certain properties of these graphs, they can either give an asymptotic negligible contribution or contribute to the leading order. In fact, we will show that at the end only graphs with a very particular structure contribute to the leading order of the sum. The second step of the proof consists in pinpointing the negligible contributions, and also identifying the structure of the graphs that give the leading contribution. This part is done in Section~\ref{sec:graphscounting}

The last part of the proof then consists in counting exactly the graphs that give the leading contributions, and this will be done in Section~\ref{sec:leadingcontrCLT}, which completes the proof of Theorem~\ref{teo:CLT}.

\subsection{From moments to combinatorics of graphs}
\label{sec:sumtographs} \hfill 

We now show how to identify the terms in the sum on the left-hand side of \eqref{eq:s_moment_tildeN} with a graph representation.

Using Lemma~\ref{lema:mean_var_Nk} and the definition of $N_k$ in \eqref{eq:def_Nk}, we write
\begin{equation*}
\alpha_k \widetilde{N}_k
    \;=\; \frac{\alpha_k}{\sigma_k m^{3/2}} (1+O(m^{-1/2})) \cdot
        \sum_{i=1}^{n_k} \sum_{j=1}^{n_{k+1}}
        (\ind_{D_i^{(k)} > D_j^{(k+1)}} - \mb p_k),
\end{equation*}
and therefore
\begin{equation}
\label{eq:sum_before_pow_s}
\sum_{k=1}^{\ell}\alpha_k \widetilde{N}_k
    \;=\; m^{-3/2} (1+O(m^{-1/2})) \sum_{k=1}^{\ell} \sum_{i=1}^{n_k} \sum_{j=1}^{n_{k+1}}
        \frac{\alpha_k}{\sigma_k} (\ind_{D_i^{(k)} > D_j^{(k+1)}} - \mb
        p_k)\,.
\end{equation}
Raising equation~\eqref{eq:sum_before_pow_s} to the power $s$ can be seen
combinatorially as choosing $s$ indexes $(k, i, j)$
from the triple sum above, multiplying their terms together and finally
summing over all possible choices. We now introduce a graph representation of this procedure. 

Define
\begin{equation}\label{eq:def_VEgraphG}
\begin{aligned}
V &\;\coloneqq\; \{(k,i); k \in [\ell], i \in [n_k]\},\\
E &\;\coloneqq\; \{e = \bigl((k,i),\, (k+1,j)\bigr):
        k\in [\ell], i \in [n_k], j\in [n_{k+1}]\}\,.
\end{aligned}
\end{equation}
The graph $\mc G = (V,E)$ has vertices representing all faces of all
dice and edges $e$ that represent the triples $(k, i, j)$ that appear in
equation~\eqref{eq:sum_before_pow_s}. Graph $\mc G$ already has some structure
inherited from the situation it encodes: it is clearly $\ell$-partite, with
parts $V_k := \{(k,i): i \in [n_k]\}$ and edges exist only between $V_k$ and
$V_{k+1}$.

Any choice $H = \{(k_t, i_t, j_t): t \in [s]\}$ of $s$ indices can be seen as
an ordered collection of $s$ (possibly repeated) edges of $\mc G$, and we refer
to the set of all possible $H$ as $\mc G_s$. Any fixed $H \in \mc G_s$
can be interpreted as a weighted subgraph of $\mc G$: for each edge $e \in \mc
G$, we assign the weight $w(e) = \# \{t \in [s]: (k_t, i_t, j_t)=e\}$, i.e.,
its multiplicity. 
For a graph $H \in \mc G_s$ introduce $\varphi(H)$ by
\begin{equation}
\label{eq:def_varphi_H}
\varphi(H)
    \;=\; \prod_{t \in [s]} \frac{\alpha_{k_t}}{\sigma_{k_t}}
        \bigl(\ind_{D^{(k_t)}_{i_t} > D^{(k_t+1)}_{j_t}} - \mb p_{k_t}\bigr).
\end{equation}

When we raise \eqref{eq:sum_before_pow_s} to the power $s$, we re-index the resulting sum on the right-hand side by $H\in \mc G_s$, and the factor $\varphi(H)$ is precisely the term in this sum that corresponds to a given graph $H\in\mc G_s$. Taking expectation, we thus obtain
\begin{equation}
\label{eq:s_moment_as_graph_sum}
\bb E\Bigl[ \Bigl(\sum_{k=1}^{\ell}\alpha_k \tilde{N}_k\Bigr)^{s}\Bigr]
    \;=\; m^{-\frac{3s}{2}} \bigl(1 + O(m^{-\frac{1}{2}})\bigr)
        \sum_{H \in \mc G_s} \bb E[\varphi(H)]\,.
\end{equation}
Equation~\eqref{eq:s_moment_as_graph_sum} expresses the expectation we want to compute in terms of a weighted sum over graphs, and the next step is to identify which structure on these graphs leads to leading and negligible asymptotic contributions as $m\to\infty$.

\subsection{Estimating the contributions from each class of graphs}
\label{sec:graphscounting}\hfill 

The next step is to estimate the terms inside the sum in \eqref{eq:s_moment_as_graph_sum}. The following claims emphasize
some of the main properties that will play a role in our computations.

\begin{claim}
\label{claim:boundedness}
Quantity $\varphi(H)$ is uniformly bounded for all $H \in \mc G_s$.
\end{claim}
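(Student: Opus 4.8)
The plan is to observe that $\varphi(H)$ is, by its very definition \eqref{eq:def_varphi_H}, a product of exactly $s$ factors, and to bound each factor by a constant that depends neither on $H$ nor on $m$. First I would note that $\ind_{D^{(k_t)}_{i_t} > D^{(k_t+1)}_{j_t}} \in \{0,1\}$ while $\mb p_{k_t} = \mb p_{k_t}(m)$ is a probability (see \eqref{eq:defpk}), hence $\mb p_{k_t} \in [0,1]$; consequently the centered indicator satisfies the deterministic bound $\bigl|\ind_{D^{(k_t)}_{i_t} > D^{(k_t+1)}_{j_t}} - \mb p_{k_t}\bigr| \le 1$, regardless of the dice outcomes.

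Next I would control the prefactors $\alpha_{k_t}/\sigma_{k_t}$. The coefficients $\alpha_1,\hdots,\alpha_\ell$ are fixed real numbers, and by Assumption~\ref{assumption:main}--(ii) one has $\sigma_k = \sigma_k(m)\to \sigma_k(\infty)\in(0,\infty)$ as $m\to\infty$; since moreover $\sigma_k(m)>0$ for every single $m$, it follows that $\inf_m \sigma_k(m) > 0$ for each $k$. Setting
$$
C \;\coloneqq\; \max_{k\in[\ell]}\ \sup_{m}\ \frac{|\alpha_k|}{\sigma_k(m)} \;<\; \infty,
$$
we then have $\bigl|\alpha_{k_t}/\sigma_{k_t}\bigr|\le C$ for every $t\in[s]$ and every $m$. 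Multiplying the $s$ bounded factors gives $|\varphi(H)| \le C^s$ for all $H\in\mc G_s$ and all $m$, which is exactly the asserted uniform bound (with $s$ and $\alpha$ fixed, as they are throughout this computation).

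I do not expect any genuine obstacle here: the estimate is just the crude bound on a product of centered Bernoulli variables. The only point worth stating carefully is the uniform-in-$m$ positivity of the normalizing constants $\sigma_k(m)$, which is precisely what is furnished by Assumption~\ref{assumption:main}--(ii) together with the fact that each $\sigma_k(m)$ is strictly positive; everything else is immediate.
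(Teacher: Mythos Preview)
Your proof is correct and follows essentially the same approach as the paper: bound each of the $s$ factors in \eqref{eq:def_varphi_H} by a constant independent of $H$, using that the centered indicator is bounded and that Assumption~\ref{assumption:main}--(ii) forces $\sigma_k(m)$ to be bounded away from zero. The paper states the bound as $|\varphi(H)| \le (2\max_k \alpha_k/\sigma_k)^s$ with no further elaboration; your version is slightly more explicit about the uniformity in $m$ (one tiny caveat: the assumptions guarantee $\sigma_k(m)>0$ only for all large $m$, not literally every $m$, but since the claim is used solely in the asymptotic analysis this is harmless).
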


\begin{proof}
Since $\sigma_k$ is bounded away from zero as $m$ tends to
infinity (see Assumption~\ref{assumption:main}-(ii)) we have
\begin{equation*}
\label{eq:varphi_H_unif_bound}
|\varphi(H)| \;\le\; \Bigl(2\max_{k \in [\ell]} \frac{\alpha_k}{\sigma_k}\Bigr)^s\,.
    \qedhere
\end{equation*}
\end{proof}

Let $H \in \mc G_s$. We say that edges $e_0$ and $\tilde{e}$ in $H$ are in the
same connected component if there is a sequence of edges $(e_j \in H; j \in [t])$
such that $e_{j-1}$ and $e_{j}$ have a vertex in common for every $j \in [t]$
and $e_t = \tilde{e}$. This forms an equivalence relation and we
partition the edges of $H$ into connected components. This is helpful to
take advantage of independence when evaluating expected values.
\begin{claim}
\label{claim:indep_components}
Suppose $H \in \mc G_s$ has $t$ connected components $H_1, \ldots, H_t$.
    Then 
    $$
    \bb E [\varphi(H)] = \prod_{i \in [t]} \bb E[\varphi(H_i)].
    $$
\end{claim}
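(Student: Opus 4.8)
The plan is to exploit the independence structure of the random variables indexing distinct connected components. Recall that $\varphi(H) = \prod_{t \in [s]} \frac{\alpha_{k_t}}{\sigma_{k_t}}\bigl(\ind_{D^{(k_t)}_{i_t} > D^{(k_t+1)}_{j_t}} - \mb p_{k_t}\bigr)$ is a product indexed by the $s$ edges of $H$ (with multiplicity), and each factor is a deterministic constant times a centered indicator depending only on the pair of faces $D^{(k_t)}_{i_t}$ and $D^{(k_t+1)}_{j_t}$, i.e.\ only on the two vertices joined by that edge. Grouping the product over edges according to the connected components $H_1, \ldots, H_t$ of $H$, we may write $\varphi(H) = \prod_{i \in [t]} \varphi(H_i)$, where each $\varphi(H_i)$ is the analogous product restricted to the edges lying in $H_i$.

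The key observation is that $\varphi(H_i)$ is a measurable function of the family of random faces $\{D^{(k)}_j : (k,j) \in V(H_i)\}$ indexed by the vertex set of $H_i$. Since $H_1, \ldots, H_t$ are the connected components of $H$, their vertex sets are pairwise disjoint; and because all the faces $\{D^{(k)}_j\}_{k,j}$ across all dice are mutually independent (by hypothesis on the collection $\mb D_m$ of independent dice), the random variables $\varphi(H_1), \ldots, \varphi(H_t)$ are mutually independent. Taking expectations and using that the expectation of a product of independent integrable random variables factors — each $\varphi(H_i)$ being bounded by Claim~\ref{claim:boundedness}, hence integrable — yields
$$
\bb E[\varphi(H)] \;=\; \bb E\Bigl[\prod_{i \in [t]} \varphi(H_i)\Bigr] \;=\; \prod_{i \in [t]} \bb E[\varphi(H_i)]\,,
$$
which is the claim.

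There is no real obstacle here; the only point requiring a moment's care is the verification that distinct connected components have disjoint vertex sets (immediate from the definition of connected component as an equivalence class of edges under vertex-sharing) and that each $\varphi(H_i)$ indeed depends only on the faces indexed by $V(H_i)$ — this is transparent from the formula \eqref{eq:def_varphi_H}, since the factor associated to an edge $e = ((k,i),(k+1,j))$ involves only $D^{(k)}_i$ and $D^{(k+1)}_j$. The boundedness from Claim~\ref{claim:boundedness} guarantees all expectations are finite so that the factorization of the expectation over independent factors is legitimate.
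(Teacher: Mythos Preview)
Your proof is correct and follows essentially the same approach as the paper's: the paper simply observes that for $i \neq j$ the random variables $\varphi(H_i)$ and $\varphi(H_j)$ depend on disjoint sets of dice faces, which is exactly the independence argument you spell out in more detail.
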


\begin{proof}
It is immediate from the definitions, since for $i \neq j$ the random variables
$\varphi(H_i)$ and $\varphi(H_j)$ depend on disjoint sets of dice faces.
\end{proof}

Claims~\ref{claim:boundedness} and~\ref{claim:indep_components} allow us to
disregard the contribution of some classes graphs. In the next
two claims, we take advantage of the factor $m^{-\frac{3s}{2}}$ to conclude that the contribution of
graphs with too few or too many connected components is negligible.

\begin{claim}
\label{claim:few_components}
There are at most $K_1m^{(3s-1)/2}$ graphs in $\mc G_s$ with less than $s/2$
connected components, where $K_1$ does not depend on $m$.
\end{claim}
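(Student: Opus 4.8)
The plan is to obtain the bound by a crude counting argument driven by one elementary inequality relating, for a graph $H\in\mc G_s$, its number of distinct vertices $v$, its number of distinct edges, and its number $c$ of connected components. Recall that an element $H\in\mc G_s$ is an ordered $s$-tuple of edges of $\mc G$, so the distinct edges of $H$ form a (simple, since $\ell\ge 3$) subgraph of $\mc G$ on $v$ vertices, with at most $s$ edges. On the other hand any graph on $v$ vertices with $c$ components has at least $v-c$ edges (take a spanning forest). Hence $v-c\le s$, i.e. $v\le s+c$.

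First I would feed the hypothesis $c<s/2$ into this: it gives $v<3s/2$, and since $v$ is an integer, $v\le\lfloor(3s-1)/2\rfloor=:V_{\max}$; note in particular $V_{\max}\le(3s-1)/2$ (the bound is not tight when $s$ is even, which is harmless). Next I would count, as an upper bound, all $H\in\mc G_s$ using at most $V_{\max}$ distinct vertices. To specify such an $H$ it suffices to choose the subset of $V$ of vertices it uses — a subset of size at most $V_{\max}$, and since $|V|=\sum_{k=1}^{\ell}n_k\le \ell m$ there are at most $\sum_{v'\le V_{\max}}\binom{\ell m}{v'}\le V_{\max}(\ell m)^{V_{\max}}$ such subsets — and then the ordered $s$-tuple of edges of $\mc G$ all of whose endpoints lie in that subset, of which there are at most $\binom{V_{\max}}{2}^{s}$, a quantity independent of $m$. (This overcounts, since the chosen subset may be strictly larger than $V(H)$, which is fine for an upper bound.) Multiplying these two estimates yields at most
\begin{equation*}
V_{\max}\,(\ell m)^{V_{\max}}\binom{V_{\max}}{2}^{s}\;\le\;K_1\,m^{(3s-1)/2}
\end{equation*}
graphs, with $K_1:=V_{\max}\,\ell^{V_{\max}}\binom{V_{\max}}{2}^{s}$ independent of $m$ (using $V_{\max}\le(3s-1)/2$ and $m\ge 1$), which is the assertion of the claim.

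I do not expect a genuine obstacle here: the argument is purely combinatorial and the factor $m^{-3s/2}$ from \eqref{eq:s_moment_as_graph_sum} is what will later make this $O(m^{(3s-1)/2})$ term negligible. The only points deserving a little care are the parity bookkeeping that converts $c<s/2$ into the exponent $(3s-1)/2$, and making explicit that the ``choose vertices, then choose edges among them'' procedure is a valid (if wasteful) upper bound for $|\{H\in\mc G_s: v(H)\le V_{\max}\}|$.
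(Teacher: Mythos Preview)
Your argument is correct and takes a genuinely different route from the paper. The paper builds each $H$ edge by edge: it first places one edge in each of the $t$ components (at most $O(m^2)$ choices per edge), then attaches the remaining $s-t$ edges to already-used vertices (at most $O(m)$ choices each), yielding $O(m^{2t+(s-t)})=O(m^{s+t})$ graphs with $t$ components; summing over $t\le(s-1)/2$ gives $O(m^{(3s-1)/2})$. You instead bound the number $v$ of distinct vertices via the spanning-forest inequality $v\le s+c<3s/2$, and then let the entire $m$-dependence enter through the choice of the vertex set, the edge-tuple factor $\binom{V_{\max}}{2}^{s}$ being $O(1)$. Your approach is slightly more conceptual (one clean graph inequality rather than a sequential construction), whereas the paper's version has the minor advantage of displaying the exponent $s+t$ for each fixed $t$, which makes transparent why the borderline case $t=s/2$ (for even $s$) lands exactly at order $m^{3s/2}$, matching the normalization in \eqref{eq:s_moment_as_graph_sum}. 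A tiny bookkeeping remark: the sum $\sum_{v'\le V_{\max}}\binom{\ell m}{v'}$ has $V_{\max}+1$ terms rather than $V_{\max}$, but this only affects the constant $K_1$.
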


\begin{proof}
We give an upper bound on the number of graphs in $\mc G_s$ with $t$ connected
components. Define $f_{\max} \coloneqq \max_{k \in [\ell]} f_k$. The total number of
edges in $\mc G$ is
\begin{equation}
|E| \;=\; \sum_{k \in [\ell]} (f_k m) (f_{k+1} m)
    \;=\; m^2 \sum_{k \in [\ell]} f_k f_{k+1}
    \;\le\; (\ell f_{\max}^2) m^2\,.
\end{equation}
To count the number of graphs in $\mc G_s$, we begin by building such graphs $H\in \mc G_s$ in a specific ordering.
Let $H_j$ with $j \in [t]$ denote the $t$ connected components of a given $H$.
First, we choose one edge $e_j$ from $E$ for each $H_j$, without any
restriction. For these initial choices, we have at most
$((\ell f_{\max}^2) m^2)^t$ possibilities. Since $H$ has $s$ edges, we still
have to choose $s-t$ edges. For the remaining choices $e_j$ with
$j \in [s]\setminus [t]$ we will always choose $e_j$ so that it has some
vertex in common with some previously chosen $e_i$ with $i \in [j-1]$, to ensure that we
do not create any new connected components. Hence, on the second round of
choices, for choosing $e_j$ we have at most $(2(j-1))$ options for the
common vertex and at most $2f_{\max} m$ options for the other vertex. Hence,
we have at most
\begin{equation*}
((\ell f_{\max}^2) m^2)^t\,  (2(s-1)  2f_{\max} m)^{s-t}
    \;=\; K m^{t + s}
\end{equation*}
possibilities, where $K = K(\ell, s, t, f_{\max})$ is a positive constant.
Finally, observe that any graph $H\in \mc G_s$ with exactly $t$
connected components can have its edges reordered to a graph $H'\in \mc G_s$
so that the edges of $H'$ were chosen according to the procedure above.
It follows that the number of graphs in $\mc G_s$ with $t$ connected components is at most $s! K m^{t+s}$.
Therefore, there are at most
\begin{equation*}
s! K (m^{s+1}+m^{s+2}+\dots+m^{s+t})
    \;\le\; s! K t m^{s + t}
    \;\le\; {K}_1 m^{s + \frac{s-1}{2}}
\end{equation*}
graphs in $\mc G_s$ with less than $s/2$ connected components (as $t<s/2$, then
$t\le (s-1)/2$, because $t$ and $s$ are integer numbers). The positive constant
${K}_1$ does not depend on $m$, and the claim is proved.
\end{proof}

\begin{claim}
\label{claim:many_components}
If $H \in \mc G_s$ has more than $s/2$ connected components,
then $\esp{\varphi(H)}=0$.
\end{claim}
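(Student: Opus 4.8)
The plan is to reduce the claim, via Claim~\ref{claim:indep_components}, to the trivial fact that a connected component made of a single unrepeated edge has zero expectation.

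First I would carry out a pigeonhole argument. Write $H=\{(k_t,i_t,j_t):t\in[s]\}$, let $H_1,\dots,H_p$ be its connected components, and let $c_i$ be the number of edges of $H_i$ counted with multiplicity, so that $\sum_{i=1}^{p}c_i=s$. Each component contains at least one edge, so $c_i\ge 1$ for every $i$; hence if $p>s/2$ we cannot have $c_i\ge 2$ for all $i$, since that would force $s=\sum_i c_i\ge 2p>s$. Thus there is an index $i_0$ with $c_{i_0}=1$, meaning that $H_{i_0}$ consists of a single edge $e=((k,i),(k+1,j))$ of multiplicity one.

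Then I would compute the contribution of that component. By~\eqref{eq:def_varphi_H}, $\varphi(H_{i_0})=\frac{\alpha_k}{\sigma_k}\bigl(\ind_{D^{(k)}_i>D^{(k+1)}_j}-\mb p_k\bigr)$; since the faces of a die are i.i.d.\ and distinct dice are independent, $\esp{\ind_{D^{(k)}_i>D^{(k+1)}_j}}=\bb P(D^{(k)}_1>D^{(k+1)}_1)=\mb p_k$ by~\eqref{eq:defpk}, so $\esp{\varphi(H_{i_0})}=0$. Finally, Claim~\ref{claim:indep_components} yields $\esp{\varphi(H)}=\prod_{i=1}^{p}\esp{\varphi(H_i)}=0$, as one of the factors already vanishes.

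I do not expect a genuine obstacle here. The points that need care are the counting step --- that having more than $s/2$ components forces a component consisting of exactly one edge with multiplicity one --- and the observation that the $i$-th and $j$-th faces of a die are identically distributed, so the lone centered indicator truly has mean zero; the rest is immediate from the definitions and from Claim~\ref{claim:indep_components}.
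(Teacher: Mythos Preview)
Your proposal is correct and follows exactly the same approach as the paper: a pigeonhole argument shows some component is a single edge, that component has zero expectation by the definition of $\mb p_k$, and Claim~\ref{claim:indep_components} finishes. Your write-up is slightly more explicit about the counting step, but the substance is identical.
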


\begin{proof}
As there are more than $s/2$ connected components and only $s$ edges, at least
one of the components must be an isolated edge, say $H_1$ is just the edge
$((k,i),  (k+1,j))$. Then, we have
\begin{equation*}
\bb E [\varphi(H_1)]
    \;=\; \bb E\Bigl[
        \frac{\alpha_k}{\sigma_k}(\ind_{D^{(k)}_{i} > D^{(k+1)}_{j}} - \mb p_k)
        \Bigr]
    \;=\; \frac{\alpha_k}{\sigma_k} \bb E\bigl[
        \ind_{D^{(k)}_{i} > D^{(k+1)}_{j}} - \mb p_k
        \bigr]
    \;=\; 0,
\end{equation*}
where the expectation vanishes because of the definition of $\mb p_k$ in \eqref{eq:defpk}, and the result follows by Claim~\ref{claim:indep_components}.
\end{proof}

As a consequence of the claims above, we are able to pinpoint the leading order of the
$s$ moment in~\eqref{eq:s_moment_as_graph_sum} by focusing on a very specific
class of graphs in $\mc G_s$. We say that a connected component $H_j$ of a graph $H\in \mc G_s$ is a
\textbf{cherry} if it is composed by two distinct edges, and we say that a
graph $H\in \mc G_s$ is a \textbf{cherry graph} if all its connected components are
cherries. In particular, if $H\in \mc G_s$ is a cherry graph then $s$ must be even and $H$ must have exactly $s/2$ components.

The vertex of degree 2 in a cherry will be called joint and the other two will be called tips. Let us denote by $\mc C_s$ the set of all graphs $H \in \mc G_{2s}$ that are cherry graphs. In words, if $H \in \mc C_{s}$ then it has $s$ connected components of size two
with no repeating edges. 

It turns out that the leading contribution to the right-hand side of \eqref{eq:s_moment_as_graph_sum} comes precisely from cherry graphs, as claimed by our next result.

\begin{proposition}
\label{prop:s_moment_as_graph_sum_2}
For any positive integer $s$, the estimates
\begin{align}
\label{eq:s_moment_as_graph_sum_2_odd}
\bb E\Bigl[ \Bigl(\sum_{k=1}^{\ell}\alpha_k \widetilde{N}_k\Bigr)^{2s+1}\Bigr]
    &\;=\; O(m^{-1/2})\,,\\
\label{eq:s_moment_as_graph_sum_2_even}
\bb E\Bigl[ \Bigl(\sum_{k=1}^{\ell}\alpha_k \widetilde{N}_k\Bigr)^{2s}\Bigr]
    &\;=\; m^{-3s} \smash{\sum\limits_{H \in \mc C_{s}}} \bb E[\varphi(H)]
        + O(m^{-1})\,.
\end{align}
hold true as $m\to\infty$.
\end{proposition}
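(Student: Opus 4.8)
The plan is to analyze the sum $\sum_{H\in\mc G_s}\bb E[\varphi(H)]$ in \eqref{eq:s_moment_as_graph_sum} by partitioning the graphs $H\in\mc G_s$ according to their number of connected components $t$. By Claim~\ref{claim:many_components}, every $H$ with $t>s/2$ contributes zero, so only $t\le s/2$ survives. By Claims~\ref{claim:boundedness} and~\ref{claim:few_components}, the graphs with $t<s/2$ number at most $K_1 m^{(3s-1)/2}$ and each has $|\bb E[\varphi(H)]|$ bounded by a constant independent of $m$, so their total contribution to $m^{-3s/2}\sum_H\bb E[\varphi(H)]$ is $O(m^{-3s/2}\cdot m^{(3s-1)/2})=O(m^{-1/2})$. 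Hence only the graphs with exactly $t=s/2$ components can contribute to the leading order, and in particular when $s$ is odd there are no such graphs, which already gives \eqref{eq:s_moment_as_graph_sum_2_odd} after absorbing the $(1+O(m^{-1/2}))$ prefactor.

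For the even case $s=2s'$, I would first argue that among graphs with exactly $s'$ connected components and $2s'$ edges, only the \textbf{cherry graphs} (and hence the set $\mc C_{s'}$) carry non-negligible weight. Indeed, a connected component with $a$ edges spans at most $a+1$ vertices, and a component that is a single isolated edge forces $\bb E[\varphi]=0$ exactly as in Claim~\ref{claim:many_components}; so every surviving component has at least two edges, and since the total edge count is $2s'$ and there are $s'$ components, \emph{every} component has exactly two edges. A two-edge connected component is either two parallel (repeated) edges on the same pair of vertices, or a genuine cherry (a path of length two through a joint vertex). The repeated-edge components are harmless for the expectation but one should check the \textbf{counting}: a cherry graph spans $3$ vertices per component, hence $3s'$ vertices total, so there are $\Theta(m^{3s'})$ of them and they survive the $m^{-3s/2}=m^{-3s'}$ normalization; a component of two repeated edges spans only $2$ vertices, so graphs containing at least one such component number $O(m^{3s'-1})$ and are therefore negligible by the same argument as Claim~\ref{claim:few_components}. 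This is the step I expect to be the main obstacle, because one must carefully re-run the ordered-construction counting argument of Claim~\ref{claim:few_components} at the finer resolution of individual components rather than just the global component count, and also separate the vanishing-expectation cherries (those whose two edges lie in different $V_k$–$V_{k+1}$ strips so that the two indicator differences are independent and each has mean zero) from the surviving ones; however, the vanishing ones only \emph{help} the estimate, so it suffices to bound the full cherry sum $\sum_{H\in\mc C_{s'}}\bb E[\varphi(H)]$, which is $\Theta(m^{3s'})$ in the worst case.

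Putting these pieces together: starting from \eqref{eq:s_moment_as_graph_sum} with $s=2s'$,
\begin{align*}
\bb E\Bigl[ \Bigl(\sum_{k=1}^{\ell}\alpha_k \widetilde{N}_k\Bigr)^{2s'}\Bigr]
&=m^{-3s'}\bigl(1+O(m^{-1/2})\bigr)\Bigl(\sum_{H\in\mc C_{s'}}\bb E[\varphi(H)]
+\sum_{\substack{H\in\mc G_{2s'}\\ t\ne s'\text{ or }H\notin\mc C_{s'}}}\bb E[\varphi(H)]\Bigr),
\end{align*}
where the second inner sum is bounded in absolute value by a constant times $m^{3s'-1}$ (graphs with $t<s'$, or with $t=s'$ but containing a repeated-edge component; those with $t>s'$ contribute $0$). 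Thus that second term contributes $O(m^{-3s'}\cdot m^{3s'-1})=O(m^{-1})$. Finally, the prefactor $(1+O(m^{-1/2}))$ multiplied against $m^{-3s'}\sum_{H\in\mc C_{s'}}\bb E[\varphi(H)]=O(1)$ produces only an extra $O(m^{-1/2})$ which is absorbed into the $O(m^{-1})$ claimed — here one should note that in fact the cherry sum is $O(1)$ so $O(m^{-1/2})\cdot O(1)$ would give $O(m^{-1/2})$, so to get the sharper $O(m^{-1})$ in \eqref{eq:s_moment_as_graph_sum_2_even} one keeps the $m^{-3s}\sum_{H\in\mc C_s}\bb E[\varphi(H)]$ term exactly and only discards lower-order graph classes, each of which is genuinely $O(m^{-1})$ after normalization; the $(1+O(m^{-1/2}))$ factor on the \emph{cherry} term must be expanded and its correction shown to be $O(m^{-1})$ as well, which follows because each cherry expectation is itself $O(m^0)$ and the sharper expansion of the prefactor coming from $v_k=\sigma_k(\infty)+o(1)$ in \eqref{eq:standarlization} can be taken to order $m^{-1}$ — I would simply state this and defer the exact cherry count to Section~\ref{sec:leadingcontrCLT}. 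This completes the reduction, leaving the evaluation of $m^{-3s}\sum_{H\in\mc C_s}\bb E[\varphi(H)]$ for the next subsection.
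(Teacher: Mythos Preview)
Your argument follows the paper's proof essentially verbatim: partition $\mc G_s$ by the number $t$ of connected components, kill $t>s/2$ via Claim~\ref{claim:many_components}, bound $t<s/2$ via the counting in Claim~\ref{claim:few_components}, and for even $s=2s'$ with $t=s'$ reduce to cherry graphs after discarding the $O(m^{3s'-1})$ graphs containing a repeated-edge component.

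Two small remarks. First, your aside about ``vanishing-expectation cherries'' is a confusion: by definition the two edges of a cherry share the joint vertex, so the two centered indicators are never independent; every cherry is of one of the three types in Proposition~\ref{prop:varphi_cherries}, and generically none of the $\varphi_{k,t}$ vanish. This does not affect your argument since you only use the inclusion into $\mc C_{s'}$.

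Second, your concern about the prefactor is legitimate and the paper glosses over it. As written, the $(1+O(m^{-1/2}))$ in \eqref{eq:s_moment_as_graph_sum} would only yield an $O(m^{-1/2})$ error in \eqref{eq:s_moment_as_graph_sum_2_even}. The resolution is the one you sketch: take $\sigma_k=\sigma_k(m)$ (not $\sigma_k(\infty)$) in the definition of $\varphi(H)$, so that by the exact variance formula in Lemma~\ref{lema:mean_var_Nk} one has $\var{N_k}=\sigma_k(m)^2m^3(1+O(m^{-1}))$ and the prefactor is genuinely $(1+O(m^{-1}))$. Note that the convergence $\sigma_k(m)\to\sigma_k(\infty)$ carries no rate under Assumption~\ref{assumption:main}, so working with $\sigma_k(m)$ is essential here; in any case, for the CLT itself $O(m^{-1/2})$ already suffices.
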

\begin{proof}
Let us estimate the $s$-moment via equation~\eqref{eq:s_moment_as_graph_sum}.
From Claims~\ref{claim:boundedness} and~\ref{claim:few_components} we conclude
that when estimating the sum in equation~\eqref{eq:s_moment_as_graph_sum} the
contribution of graphs in $\mc G_s$ with less than $s/2$ connected components is
too small when compared to $m^{3s/2}$. By Claim~\ref{claim:many_components},
the contribution of graphs with more than $s/2$ is precisely zero.
Hence, equation~\eqref{eq:s_moment_as_graph_sum_2_odd} is immediate: for
the $(2s+1)$-moment we can write
\begin{align*}
m^{-\frac{3}{2}(2s+1)}\smash{\sum\limits_{H \in \mc G_{2s+1}}} \bb E[\varphi(H)]
    &\;=\; m^{-\frac{3}{2}(2s+1)} \smash{\sum\limits_{1 \le t \le s}}\,
    \sum\limits_{\substack{\text{\scriptsize $H$ with $t$ connected}\\ \text{\scriptsize components}}}
    \hspace{-3mm} \bb E[\varphi(H)]\\
    &\;\le\; m^{-3s - 3/2} \cdot K m^{3s+1}
    =  K m^{-1/2}\,.
\end{align*}

When estimating the $2s$-moment, the same argument shows that we only have to
worry with the contribution of graphs with exactly $s$ connected
components. By Claim~\ref{claim:indep_components} if $H$ has some component
with only one edge then $\bb E[\varphi(H)] = 0$. Consequently, for a non-zero
contribution, each of the $s$ components must have at least 2 edges. But then
we already have $2s$ edges in total, and we conclude that each component has
exactly 2 edges. In principle, we can have multiple edges in such
graphs. However, another simple counting argument shows that
the number of such graphs containing at least one multiple edge is
at most $K m^{3s-1}$. This implies we can focus on the sum over $H \in \mc
C_s$, as claimed in \eqref{eq:s_moment_as_graph_sum_2_even}.
\end{proof}

With Proposition~\eqref{prop:s_moment_as_graph_sum_2} at hand, the remaining step is to estimate the sum over cherry graphs in the right-hand side of \eqref{eq:s_moment_as_graph_sum_2_even}, we will see that this remaining sum is in fact $\Theta(m^{3s})$,
so the leading order is indeed given by it, and we will in fact be able to compute its contribution precisely.

\subsection{Computing the leading contribution, and the conclusion of the proof of Theorem~\ref{teo:CLT}}
\label{sec:leadingcontrCLT} \hfill 

What remains is to count all the cherry graphs $H \in \mc C_s$ and compute
$\bb E[\varphi(H)]$. Since cherries are disjoint, we break any $H\in \mc C_s$ into $s$
cherries, which we denote $H_1, \ldots, H_s$. For a cherry $H_j$ the value
of $\bb E[\varphi(H_j)]$ will depend on which dice are used for its joints and tips.
We say that cherry $H_j$ has:
\begin{description}
\item[Type $(k, 1)$] If its joint is on die $D^{(k)}$, one tip is on
    $D^{(k-1)}$ and the other is on $D^{(k+1)}$.
\item[Type $(k, 2)$] If its joint is on die $D^{(k)}$, and both tips are on $D^{(k+1)}$.
\item[Type $(k, 3)$] If its joint is on die $D^{(k)}$, and both tips are on $D^{(k-1)}$.
\end{description}

By the construction of the graph $\mc G$ in \eqref{eq:def_VEgraphG}, these are the only cherries that can occur as components of a graph $H\in \mc C_s$. It is straightforward to compute $\bb E[\varphi(H_j)]$.
\begin{proposition}
\label{prop:varphi_cherries}
If cherry $H_j$ has type $(k,t)$ then $\bb E[\varphi(H_j)]$ depends only on
$(k,t)$. Denoting its value by $\varphi_{k,t}$, we have
\begin{equation}
\label{eq:def_varphi_constants}
\varphi_{k,t}
    \;\coloneqq\; \bb E[\varphi(H_j)] \;=\;
\begin{dcases}
\frac{\alpha_{k-1} \alpha_{k}}{\sigma_{k-1} \sigma_{k}} 
    (\mb s_k - \mb p_{k-1} \mb p_k)
    &\text{if }t=1;\\
\left(\frac{\alpha_{k}}{\sigma_{k}}\right)^2 
    (\mb r_k - \mb p_k^2)
    &\text{if }t=2;\\
\left(\frac{\alpha_{k-1}}{\sigma_{k-1}}\right)^2 
    (\mb q_k - \mb p_{k-1}^2)
    &\text{if }t=3.
\end{dcases}
\end{equation}
\end{proposition}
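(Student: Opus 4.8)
The plan is a direct computation, organized by the three possible types of cherry. Fix a cherry $H_j$ of type $(k,t)$, with joint vertex $(k,c)$ and two tip vertices. By the definition of $\mc G$ in \eqref{eq:def_VEgraphG}, every edge incident to a vertex of $V_k$ is of the form $\bigl((k-1,a),(k,c)\bigr)$ or $\bigl((k,c),(k+1,b)\bigr)$, and choosing two \emph{distinct} such edges sharing the joint $(k,c)$ gives exactly: one of each kind (type $1$), two of the second kind with $b_1\neq b_2$ (type $2$), or two of the first kind with $a_1\neq a_2$ (type $3$). In each case I would write $\varphi(H_j)$ from \eqref{eq:def_varphi_H} as the product of the two corresponding factors $\tfrac{\alpha_{\star}}{\sigma_{\star}}(\ind_{\cdot}-\mb p_{\star})$, recalling that the edge $\bigl((k,i),(k+1,j)\bigr)$ always encodes the event $D^{(k)}_i>D^{(k+1)}_j$, so the coefficient index is that of the ``left'' die. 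The prefactors $\alpha_{\star}/\sigma_{\star}$ depend only on which dice the two edges join, hence only on $(k,t)$, and pull out of the expectation.

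The key structural point is that the two or three distinct faces appearing as vertices of $H_j$ are mutually independent random variables: faces on different dice are independent by the independence hypothesis on $\mb D_m$, while two distinct tips lying on the same die $D^{(k\pm 1)}$ are distinct entries of that die and hence i.i.d. Expanding the product $(\ind_1-\mb p)(\ind_2-\mb p')$ into four terms and taking expectations, the three terms other than the cross term sum to $-\mb p\,\mb p'$, using $\bb E[\ind_{D^{(i)}_\cdot>D^{(i+1)}_\cdot}]=\mb p_i$ from \eqref{eq:defpk}. The cross term $\bb E[\ind_1\ind_2]$ is the probability of a compound inequality among these independent faces; by exchangeability of equidistributed faces within each die it does not depend on the particular labels $a,b,c$, only on $(k,t)$. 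For type $1$ it equals $\bb P\bigl(D^{(k-1)}_1>D^{(k)}_1>D^{(k+1)}_1\bigr)=\mb s_k$ and the leftover term is $-\mb p_{k-1}\mb p_k$; for type $2$ it equals $\bb P\bigl(D^{(k)}_1>D^{(k+1)}_1,\,D^{(k)}_1>D^{(k+1)}_2\bigr)=\mb r_k$ with leftover $-\mb p_k^2$; for type $3$ it is the corresponding two-fold probability for the pair $(D^{(k-1)},D^{(k)})$, i.e. the quantity in \eqref{eq:defqk} with the indices shifted, with leftover $-\mb p_{k-1}^2$. Collecting the prefactor and these two contributions yields $\bb E[\varphi(H_j)]=\varphi_{k,t}$ with $\varphi_{k,t}$ exactly as in \eqref{eq:def_varphi_constants}.

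There is no substantial obstacle here; the computation is elementary once the independence of the faces is in place. The only point requiring genuine care is the bookkeeping: matching each cherry type to the dice appearing at its joint and tips, keeping straight the direction of the inequality encoded by an edge, and verifying that the compound-inequality probabilities that arise coincide precisely with the definitions \eqref{eq:defqk}--\eqref{eq:defsk} after the appropriate cyclic relabeling of indices, so that the final expressions match \eqref{eq:def_varphi_constants} verbatim.
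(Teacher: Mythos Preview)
Your direct case-by-case computation is correct and is exactly the paper's approach; their proof is the single sentence that the result is ``straightforward from the definition of $\varphi(H_j)$ \dots\ and the definitions of $\mb p_k,\mb q_k,\mb r_k,\mb s_k$.'' One bookkeeping note on your own caveat: the type-$3$ cross term you correctly describe as ``the quantity in \eqref{eq:defqk} with the indices shifted'' is $\mb q_{k-1}$, so the printed $\mb q_k$ in \eqref{eq:def_varphi_constants} appears to be a typo in the paper (the downstream identification of the coefficient of $\alpha_k^2$ with $1$ via \eqref{eq:def_sigma_k} only works with $\mb q_{k-1}$), and hence your computation will not match that line \emph{verbatim}.
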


\begin{proof}
It is straightforward from the definition of $\varphi(H_j)$ given
in~\eqref{eq:def_varphi_H} and the definitions of $\mb p_k, \mb q_k, \mb r_k,
\mb s_k$ in \eqref{eq:defpk}--\eqref{eq:defsk}.
\end{proof}

Since $k \in [\ell]$, we have in total $3\ell$ different types of cherries. Recall that
the total number of cherries is $s$. It is useful to classify $H \in \mc C_s$
with respect to the number of occurrences of each type. We define $M_{k,t}=M_{k,t}(H)$
as the number of cherries of type $(k,t)$ on the cherry graph $H$, we encode these numbers in the matrix
$M = (M_{k,t})_{\ell \times 3}$, and say that $H$ has type $M$. Observe that $M_{k,t} \in \bb N$ and
$\sum_{k,t} M_{k,t} = s$. 

With this codification in mind, for a cherry graph $H$ of type $M$ we have
\begin{equation}
\label{eq:varphi_cherries_by_type}
\bb E[\varphi(H)]
    \;=\; \prod_{j=1}^{s} \bb E[\varphi(H_j)]
    \;=\; \prod_{k,t} \varphi_{k,t}^{M_{k,t}}\,.
\end{equation}

Finally, to estimate the sum over all $H \in \mc C_s$ we partition the cherry
graphs according to the possible types. Let $\mc C_s(M)$ denote the set of
all cherry graphs $H \in \mc C_s$ of type $M$.
We need estimates on the number of elements of $\mc C_s(M)$ for each $M$.
\begin{lemma}
\label{lema:cherry_graphs_by_type}
For each cherry type $(k,t)$, define
\begin{equation}
\label{eq:def_cherry_constants}
c_{k,t} \;=\;
    \begin{cases}
    f_{k-1} f_k f_{k+1}
        &\text{if $t=1$};\\
    \frac{1}{2} f_{k} f_{k+1}^{2}
        &\text{if $t=2$};\\
    \frac{1}{2} f_{k} f_{k-1}^{2}
        &\text{if $t=3$}.
    \end{cases}
\end{equation}
As $m$ tends to infinity, the size $|\mc C_s(M)|$ of $\mc C_s(M)$ satisfies
\begin{equation}
\label{eq:cherry_graphs_by_type}
|\mc C_s(M)|
    \; = \;   m^{3s}\Bigl[ \prod_{k,t} \frac{c_{k,t}^{M_{k,t}}}{M_{k,t}!}\Bigr]  \left((2s)!\right)(1+O(1/m))
\end{equation}
\end{lemma}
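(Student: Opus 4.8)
The plan is to count the cherry graphs of a prescribed type $M$ directly, by first choosing which die supports each cherry's joint and tips, then counting the ways to place the vertices and finally accounting for the ordering of the $2s$ edges. Let me organize the proof around three multiplicative factors: a combinatorial choice of cherry positions, the count of vertex placements, and the edge-ordering factor $(2s)!$.

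\begin{proof}[Proof sketch of Lemma~\ref{lema:cherry_graphs_by_type}]
Fix a type matrix $M=(M_{k,t})$ with $\sum_{k,t}M_{k,t}=s$. We build a cherry graph $H\in\mc C_s(M)$ by the following procedure, and count the number of outcomes. First, recall that a graph $H\in\mc C_{s}$ is an \emph{ordered} collection of $2s$ edges of $\mc G$ (an element of $\mc G_{2s}$) whose underlying unordered edge set decomposes into $s$ vertex-disjoint cherries with no repeated edge. We count in three stages.

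\textbf{Stage 1: unordered cherries.} We first count the number of unordered cherry graphs of type $M$, i.e.\ choices of $s$ vertex-disjoint cherries, $M_{k,t}$ of each type $(k,t)$. For a single cherry of type $(k,1)$, a joint must be chosen in $V_k$ (there are $n_k=f_km$ choices), one tip in $V_{k-1}$ ($n_{k-1}=f_{k-1}m$ choices) and one tip in $V_{k+1}$ ($n_{k+1}=f_{k+1}m$ choices); hence there are $f_{k-1}f_kf_{k+1}m^3(1+O(1/m))$ ways, consistent with $c_{k,1}$. For a cherry of type $(k,2)$, the joint is in $V_k$ and the two tips are an unordered pair of distinct vertices in $V_{k+1}$, giving $n_k\binom{n_{k+1}}{2}=\tfrac12 f_kf_{k+1}^2m^3(1+O(1/m))$ ways, consistent with $c_{k,2}$; similarly for type $(k,3)$ with $c_{k,3}$. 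Since the cherries must be vertex-disjoint, each successive cherry of a given type has slightly fewer vertices available, but this only changes the count by a factor $1+O(1/m)$. Because the $M_{k,t}$ cherries of a common type $(k,t)$ are interchangeable, we divide by $M_{k,t}!$. Hence the number of unordered cherry graphs of type $M$ is
\begin{equation*}
m^{3s}\Bigl[\prod_{k,t}\frac{c_{k,t}^{M_{k,t}}}{M_{k,t}!}\Bigr](1+O(1/m)).
\end{equation*}

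\textbf{Stage 2: from unordered cherries to ordered edge sequences.} An element of $\mc C_s(M)\subset\mc G_{2s}$ is an ordering of the $2s$ edges making up such a cherry graph. Each of the $s$ cherries contributes exactly $2$ distinct edges, so a cherry graph has exactly $2s$ distinct edges, and there are $(2s)!$ distinct orderings of them, each giving a different element of $\mc G_{2s}$ but all the same underlying cherry graph. Multiplying the Stage~1 count by $(2s)!$ gives
\begin{equation*}
|\mc C_s(M)|=m^{3s}\Bigl[\prod_{k,t}\frac{c_{k,t}^{M_{k,t}}}{M_{k,t}!}\Bigr](2s)!\,(1+O(1/m)),
\end{equation*}
which is exactly \eqref{eq:cherry_graphs_by_type}. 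This completes the proof.
\end{proof}

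The main obstacle, and the point that needs care in the full writeup, is justifying that the vertex-disjointness constraint and the requirement of distinct edges within and across cherries only contribute lower-order corrections of size $1+O(1/m)$: once a few vertices are already used, the number of remaining vertices in each $V_k$ is $f_km-O(1)$, and the number of remaining distinct-edge choices is likewise $f_kf_{k+1}m^2-O(m)$, so each of the $s$ (bounded, $m$-independent) cherry choices loses only an $O(1/m)$ relative factor; since $s$ is fixed, the product of these corrections is still $1+O(1/m)$. One should also double-check that no two distinct orderings in Stage~2 can coincide as elements of $\mc G_{2s}$ and that, conversely, every $H\in\mc C_s(M)$ arises this way exactly once — both are immediate since a cherry graph has exactly $2s$ distinct edges, so the map "forget the ordering'' is $(2s)!$-to-one onto the set of unordered cherry graphs of type $M$.
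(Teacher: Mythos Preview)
Your proof is correct and follows essentially the same approach as the paper's: first count unordered cherries of each type (obtaining $n_{k-1}n_kn_{k+1}$, $n_k\binom{n_{k+1}}{2}$, $n_k\binom{n_{k-1}}{2}$ respectively), argue that the vertex-disjointness constraint only introduces $1+O(1/m)$ corrections since $s$ is fixed, divide by $\prod_{k,t}M_{k,t}!$ for the interchangeability of cherries of the same type, and finally multiply by $(2s)!$ to pass from unordered cherry graphs to ordered edge sequences in $\mc G_{2s}$. The paper's proof is organized slightly differently (it introduces the notation $C_{k,t}(n_1,\ldots,n_\ell)$ and builds the cherries in lexicographic order of types), but the substance is identical.
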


Since $f_k=f_k(m)$ depends on $m$, we also have that $c_{k,t}=c_{k,t}(m)$ depends on $m$, but in virtue of Assumption~\ref{assumption:main} each $c_{k,t}$ has a nonzero limit as $m\to\infty$.

\begin{proof}
We begin by counting the number of cherries of a given specific type, considering
that its edges are \emph{not ordered}. Recall that $n_k=f_k m$ denotes the number of faces in the die $D^{(k)}$, and let us define
\begin{equation}
\label{eq:def_number_cherries_by_type}
C_{k,t}(n_1,\ldots, n_\ell)
    \;\coloneqq\; \frac{1}{2} | \{H \in \mc G_{2}: \text{$H$ is a cherry of type
    $(k,t)$}\}|\,,
\end{equation}
where the factor $\frac{1}{2}$ is precisely to disregard the order of edges in
a cherry. By a simple counting argument, we have that
\begin{equation}
\label{eq:number_cherries_by_type}
C_{k,t}(n_1, \ldots, n_\ell)
    \;=\;
\begin{dcases}
n_{k-1} n_k n_{k+1}
    &\text{if $t=1$};\\
    n_{k} \binom{n_{k+1}}{2}
    &\text{if $t=2$};\\
n_{k} \binom{n_{k-1}}{2}
    &\text{if $t=3$}.
\end{dcases}
\end{equation}
Therefore, the estimate
$$
C_{k,t}(n_1, \ldots, n_\ell) = c_{k,t} m^3+O(m^2),\quad \text{as }m\to \infty,
$$
is valid, where $c_{k,t}$ are the values in \eqref{eq:def_cherry_constants}.

Now, let us fix a type $M = (M_{k,t})$. First, we compute in how many ways we
can choose an unordered collection of $s$ cherries with exactly $M_{k,t}$
occurrences of each type $(k,t)$. Given $M$, we will choose its
cherries one by one following the sequence of types
$\{(k_j, t_j): j \in [s]\}$ in lexicographic order.

The first cherry, with type $(k_1,t_1)$, is chosen from
all possible edges of $\mc G$. When choosing the following cherries, we have
to successively remove the vertices that appear in the previous cherries, to
ensure disjointness. Hence, when choosing the vertices of cherry $(k_j,t_j)$
we have $C_{k_j,t_j}(n^{(j)}_1, \ldots, n^{(j)}_\ell)$ options, where $n^{(j)}_i$
is the number of faces of die $D^{(i)}$ that do not appear in the $j-1$
previously chosen cherries. It is clear that $(n^{(j)}_i)$ will depend on
the sequence $\{(k_j, t_j)\}$. However, for our estimates it is enough to
notice that since we only choose $s$ cherries, we have $n^{(j)}_i = f_i m+O(1)$.
Finally, the above procedure chooses the $s$ cherries following the ordering
$\{(k_j, t_j)\}$. Hence, the number of choices of an unordered collection of
$s$ cherries is given by
\begin{equation*}
\frac{\prod_{j \in [s]} |C_{k_j, t_j}(n^{(j)}_1, \ldots, n^{(j)}_\ell)|}
    {\prod_{k,t} M_{k,t}!}
    \;=\;
\frac{\prod_{j \in [s]} \left(c_{k_j, t_j} m^3+O(m^2)\right)}
    {\prod_{k,t} M_{k,t}!}
    \;=\; \Bigl[ \prod_{k,t} \frac{c_{k,t}^{M_{k,t}}}{M_{k,t}!}\Bigr] m^{3s}(1+O(1/m))\,.
\end{equation*}
To conclude the argument, just notice that when summing over $H \in \mc C_s$
we are actually summing over all fixed unordered collection and considering all
possible permutations of the $2s$ edges that compose $H$. The estimate in
equation~\eqref{eq:cherry_graphs_by_type} follows.
\end{proof}

Now, we proceed with the estimate in
equation~\eqref{eq:s_moment_as_graph_sum_2_even}. Breaking the sum on the
right-hand side with respect to the type $M$ of the cherry graphs $H \in \mc
C_s$ and using equation~\eqref{eq:varphi_cherries_by_type}, we have that
\begin{align}
\bb E\Bigl[ \Bigl(\sum_{k=1}^{\ell}\alpha_k \widetilde{N}_k\Bigr)^{2s}\Bigr]
    &\;=\; m^{-3s} \smash{\sum\limits_{H \in \mc C_s}} \bb E[\varphi(H)]
        + O(m^{-1})\nonumber\\
    &\;=\; m^{-3s} \smash{\sum_M\sum\limits_{H \in \mc C_s(M)}}
        \prod_{k,t} \varphi_{k,t}^{M_{k,t}} + O(m^{-1})\nonumber\\
\label{eq:2s_moment_by_M}
    &\;=\; \smash{\sum_M} (2s)! \cdot
    \Bigl[ \prod_{k,t} \frac{(c_{k,t}\varphi_{k,t})^{M_{k,t}}}{M_{k,t}!}\Bigr]
    + O(m^{-1})\,.
\end{align}
To obtain a more meaningful expression, we recognize the sum over $M$ as a sum to the $s$ power. Indeed, recall that $M = (M_{k,t})$
is such that $M_{k,t} \in \bb N$ must sum to $s$. Hence, we can write
\begin{align}
\smash{\sum_M} (2s)! \cdot
    \Bigl[ \prod_{k,t} \frac{(c_{k,t}\varphi_{k,t})^{M_{k,t}}}{M_{k,t}!}\Bigr]
    &\;=\; \frac{(2s)!}{s!}\smash{\sum_{(M_{k,t}); \sum M_{k,t}=s}}\;
        \frac{s!}{\prod\limits_{k,t} M_{k,t}!} 
        (c_{k,t}\varphi_{k,t})^{M_{k,t}}\nonumber\\
    &\;=\; \frac{(2s)!}{s!} \Bigl(\sum_{k,t} c_{k,t} \varphi_{k,t}\Bigr)^s\nonumber\\
\label{eq:2s_moment_by_M_2}
    &\;=\; (2s-1)!! \Bigl(\sum_{k,t} 2c_{k,t} \varphi_{k,t}\Bigr)^s,
\end{align}
where we used that $(2s-1)!! = \frac{(2s)!}{s!2^s}$.

Using equation~\eqref{eq:2s_moment_by_M} the next step is to identify the limit of this $2s$-moment. From equations~\eqref{eq:def_cherry_constants}
and~\eqref{eq:def_varphi_constants} we have that 
\begin{equation}\label{eq:cktphikt}
2 c_{k,t} \varphi_{k,t}
    \;=\;
\begin{cases}
2 f_{k-1} f_k f_{k+1} \frac{\alpha_{k-1} \alpha_{k}}{\sigma_{k-1} \sigma_{k}}
    \cdot (\mb s_k - \mb p_{k-1} \mb p_k)
    &\text{if $t=1$;}\\
f_k f_{k+1}^2 \bigl(\frac{\alpha_{k}}{\sigma_{k}}\bigr)^2 \cdot
    (\mb r_k - \mb p_k^2)
    &\text{if $t=2$;}\\
f_k f_{k-1}^2 \bigl(\frac{\alpha_{k-1}}{\sigma_{k-1}}\bigr)^2 \cdot
    (\mb q_k - \mb p_{k-1}^2)
    &\text{if $t=3$,}
\end{cases}
\end{equation}
and we can recognize the sum over $(k,t) \in [\ell]\times [3]$ as a quadratic form
in the vector $\alpha = (\alpha_1, \ldots, \alpha_\ell)^T$. The coefficient of $\alpha_k^2$ is given by
\begin{equation*}
\frac{1}{\sigma_k^2} \Bigl[
    f_k   f_{k+1}^2 (\mb r_{k  } - \mb p_k^2) +
    f_k^2 f_{k+1}   (\mb q_{k+1} - \mb p_k^2)
    \Bigr] \;=\; 1\,,
\end{equation*}
recalling the definition of $\sigma_k$ in~\eqref{eq:def_sigma_k}.
The coefficient of $\alpha_{k-1} \alpha_{k}$ is precisely the value $\gamma_k=\gamma_k(m)$ given by~\eqref{eq:def_gamma_k}
\begin{equation*}
\gamma_{k}
    \;=\; \frac{1}{\sigma_{k-1} \sigma_k} f_{k-1} f_k f_{k+1}
    (\mb s_{k} - \mb p_{k-1} \mb p_k)\,.\qedhere
\end{equation*}

Writing $\alpha=(\alpha_1,\hdots, \alpha_\ell)^T$, and defining
$$
\Sigma(m) \, \coloneqq \, \left(
\begin{array}{cccccccc}
1 & \gamma_2(m) & 0 & \cdots  & 0 &\gamma_1(m)\\
\gamma_2(m) & 1 & \gamma_3(m) & \cdots & 0 & 0\\
0 & \gamma_3(m) & 1 & \cdots& 0 & 0\\
\vdots & \vdots & \vdots & \ddots & \vdots & \vdots\\
0 & 0 & 0 & \cdots & 1 & \gamma_{\ell}(m)\\
\gamma_1(m) & 0 & 0 & \cdots & \gamma_{\ell}(m) & 1
\end{array}\right),
$$
with $\gamma_k(m)$ as in \eqref{eq:def_gamma_k}, we just unraveled the identity
$$
\sum_{k,t} 2c_{k,t} \varphi_{k,t} = \alpha^T\Sigma(m)\alpha.
$$
By Assumption~\ref{assumption:main}, we learn that $\Sigma(m)$ converges to the matrix $\Sigma$ in \eqref{eq:covariancematrixCLT}. This last convergence thus shows \eqref{eq:s_moment_tildeN}, and concludes the proof of Theorem~\ref{teo:CLT}.

\section{Asymptotic behavior of the number of intransitive strings}\label{sec:stringsAsymptotics}

In this section we prove Theorem~\ref{thm:ellvalue}. We continue using the notation and notions introduced in Section~\ref{sec:deterministic} extensively. In particular, we will work with both sets $\mc W_{\btt, \ell}(n)$ and $\mc D_{\btt, \ell}(n)$.

To prove Theorem~\ref{thm:ellvalue} we will find lower and upper bounds for $|\mc W_{\btt, \ell}(n)|$. However, the structure of the set $\mc W_{\btt, \ell}(n)$ is too cumbersome for a direct combinatorial analysis. We circumvent this by translating the set of words into a set of dice that behave well, allowing us to use Theorem~\ref{teo:CLT} in its full force. 

Any collection of dice $\mb D=(D^{(1)},\hdots, D^{(\ell)})$, each with $n$ faces, and with any two faces of any two dice being distinct, may be embedded in the set $\mc D_\ell(n)$ from \eqref{eq:defDellbtt} in a natural way: the smallest among the numbers $D^{j}_i$ is replaced by the number $1$, the second smallest among $D^{j}_i$ is replaced by $2$, and so forth. 

Denote by $\mb D_{U, \ell}(n)=(D^{(1)}, \dots, D^{(\ell)})$ a collection of random dice, such that the faces $D^{(i)}_j$ are all i.i.d. uniform random variables in $(0, 1)$. If $(i, j) \neq (k, l)$, then $\bb P(D^{(i)}_j=D^{(k)}_l)=0$, so that any two faces within the collection $\mb D_{U,\ell}(n)$ are distinct almost surely. Thanks to the argument in the previous paragraph, we view $\mb D_{U,\ell}(n)$ as an element of $\mc D_\ell(n)$, which generates a random word $\mb W_{U,\ell}(n)\coloneqq\pi (\mb D_{U,\ell}(n))\in \mc W_\ell(n)$, where $\pi$ is the canonical bijection between $\mc D_\ell(n)$ and $\mc W_\ell(n)$ constructed at the beginning of Section~\ref{sec:dicewords}.

A routine symmetry argument shows that $\pi(\mb D_{U, \ell}(n))$ can be any word in $\mc W_{\ell}(n)$ with equal probability, so that $\pi$ induces the uniform distribution in $\mc W_{\ell}(n)$. We will use this observation extensively in what follows.

By Theorem~\ref{thm:nointransitive} we infer in particular that
    \begin{equation}\label{eq:nullProbability}
        \frac{|\mc W_{\btt, \ell}(n)|}{|\mc W_{\ell}(n)|}= \bb P\left({D^{(1)} \btt \cdots \btt D^{(\ell)} \btt D^{(1)}}\right) \to 0 \quad\text{as}\quad n\to\infty.
    \end{equation}
On the other hand, by Theorem~\ref{thm2}, 
    \[
        |\mc W_{\btt, \ell}(n)| = \ee^{nL(\ell)+o(n)}.
    \]
    As indicated by \eqref{eq:nullProbability}, the number of intransitive words is rather small compared to the total number of words. Furthermore, the number of intransitive words obtained by concatenating two other intransitive words cannot substantially exceed the product of the sizes of each set, so there is little hope of estimating $L(\ell)$ using this approach alone. However, we can improve this algorithm to increase the count of intransitive words significantly. By taking almost intransitive words from a large subset of $\mc W_\ell(n)$ and concatenating them with highly intransitive words, we will construct a large subset of intransitive words of size $\sim \ee^{n\ell \log \ell}$.

Introduce
    \[
        \mc Q_\ell(n)\coloneqq\left\{\mb W \in\mc W_\ell(n): N_{k}(\mb W)>\frac{n^2}{2}-\frac{n^{3/2}}{2}\left(1+\frac{1}{2n}\right)^{1/2},\ k=1, \dots, \ell\right\}.
    \]
We use Lemma~\ref{lem:prqscontdistr} and compute explicitly
    $$
    \wt N_{k}(\mb W_{U,\ell}(n))=\frac{N_{k}(\mb W_{U,\ell}(n))-n^2/2}{n^{3/2}(1+1/2n)^{1/2}/\sqrt{6}},\quad k=1,\hdots, \ell.
    $$
    Stressing that we equip $\mc W_{\btt, \ell}(n)$ with the uniform distribution, we see that
    \begin{equation}\label{eq:sizencQncW}
    \frac{|\mc Q_\ell(n)|}{|\mc W_\ell(n)|}=\bb P\left(\wt N_{k}(\mb W_{U,\ell}(n))>-\sqrt{6}/2,\ k=1, \dots, \ell \right).
    \end{equation}
    For a word $\mb W$ to be intransitive, we must have $N_k(\mb W)>n^2/2$, so the set $\mc Q_\ell(n)$ contains words that are not intransitive. Using our CLT, namely Theorem~\ref{teo:CLT}, we now show that $\mc Q_\ell(n)$ is rather large, and later we will use composition of words to construct a new set of intransitive words with the same size as $\mc Q_\ell(n)$.
    
\begin{lemma}
For any $\ell\geq 3$, the limit 
    $$
    \lim_{n\to\infty} \frac{\log|\mc Q_{\ell}(n)|}{n} = \ell\log\ell
    $$
is valid.
\end{lemma}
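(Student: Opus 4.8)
The plan is to combine the central limit theorem (Theorem~\ref{teo:CLT}) with the counting identity $|\mc W_\ell(n)| = (\ell n)!/(n!)^\ell$ and Stirling's formula. The key point is that the event defining $\mc Q_\ell(n)$ only asks each normalized count $\wt N_k(\mb W_{U,\ell}(n))$ to exceed a \emph{fixed negative constant} $-\sqrt 6/2$, rather than to be positive; this is a ``soft'' constraint whose limiting probability is strictly positive, in contrast with the intransitivity event, whose limiting probability is zero by Theorem~\ref{thm:nointransitive}.

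First I would apply Theorem~\ref{teo:CLT} to the blow-up-type sequence $\mb D_{U,\ell}(n)$: all $\ell$ dice share the uniform law on $(0,1)$ and have the same number $n$ of faces, so Assumption~\ref{assumption:main} holds with $f_k(\infty)=1$, and by Lemma~\ref{lem:prqscontdistr} we get $\mb p_k = 1/2$, $\mb q_k = \mb r_k = 1/3$, $\mb s_k = 1/6$ and $\gamma_k(\infty) = -1/2$. Hence $(\wt N_1, \ldots, \wt N_\ell)$ converges in distribution to a centered Gaussian vector $(X_1, \ldots, X_\ell)$ with covariance matrix $\Sigma$ as in \eqref{eq:covariancematrixCLT} and all off-diagonal nonzero entries equal to $-1/2$. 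Using the portmanteau theorem on the open set $\{y : y_k > -\sqrt 6/2,\ k \in [\ell]\}$ together with \eqref{eq:sizencQncW}, I obtain
\begin{equation*}
\liminf_{n\to\infty} \frac{|\mc Q_\ell(n)|}{|\mc W_\ell(n)|}
\;\geq\; \bb P\bigl(X_k > -\sqrt 6/2,\ k \in [\ell]\bigr) \;=:\; c_\ell\,.
\end{equation*}
The quantity $c_\ell$ is strictly positive: indeed, although $\Sigma$ may be degenerate (its kernel is spanned by a strictly positive vector by Proposition~\ref{prop:h2_eigenspace_zero}), the support of $(X_1,\ldots,X_\ell)$ is the subspace $\mathrm{Ker}(\Sigma)^\perp$, which contains $0$; since $0$ lies in the open orthant $\{y_k > -\sqrt 6/2\}$ and any nonempty relatively open subset of the support of a Gaussian vector has positive measure, we conclude $c_\ell > 0$. (In fact $c_\ell \ge \bb P(X_k > 0, k\in[\ell])$ could be zero, so it is essential here that the threshold is the negative constant $-\sqrt6/2$ and not $0$.) On the other hand $\mc Q_\ell(n) \subseteq \mc W_\ell(n)$ gives the trivial upper bound $|\mc Q_\ell(n)| \le |\mc W_\ell(n)|$.

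Combining the two bounds, for all large $n$ we have $\tfrac{1}{2} c_\ell\, |\mc W_\ell(n)| \le |\mc Q_\ell(n)| \le |\mc W_\ell(n)|$, so
\begin{equation*}
\frac{\log |\mc Q_\ell(n)|}{n} \;=\; \frac{\log |\mc W_\ell(n)|}{n} + \frac{O(1)}{n}\,,
\end{equation*}
and it remains to evaluate $\lim_{n\to\infty} \tfrac1n \log |\mc W_\ell(n)|$. Since $\pi$ is a bijection between $\mc D_\ell(n)$ and $\mc W_\ell(n)$, and $|\mc D_\ell(n)| = (\ell n)!/(n!)^\ell$ by the simple combinatorial argument recalled after Theorem~\ref{thm2}, Stirling's approximation yields $\log |\mc W_\ell(n)| = \ell n \log \ell + O(\log n)$, hence $\tfrac1n\log|\mc W_\ell(n)| \to \ell \log \ell$. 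This finishes the proof. The only delicate point is the strict positivity of $c_\ell$ in the possibly-degenerate Gaussian setting, which is handled precisely by the support description $\mathrm{supp}(X) = \mathrm{Ker}(\Sigma)^\perp \ni 0$ from the proof of Theorem~\ref{thm:probabilitieszero}; everything else is bookkeeping with Stirling's formula and the CLT.
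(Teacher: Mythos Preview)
Your proof is correct and follows essentially the same approach as the paper: apply the CLT for the uniform-on-$(0,1)$ dice (with $\gamma_k(\infty)=-1/2$ from Lemma~\ref{lem:prqscontdistr}), show that the limiting Gaussian probability of $\{X_k>-\sqrt6/2,\ k\in[\ell]\}$ is strictly positive, sandwich $|\mc Q_\ell(n)|$ between a positive constant times $|\mc W_\ell(n)|$ and $|\mc W_\ell(n)|$ itself, and finish with Stirling. The only cosmetic difference is in the positivity step: the paper argues that a small ball around the origin has positive Gaussian mass and lies inside the orthant, whereas you invoke the support description $\mathrm{supp}(X)=\mathrm{Ker}(\Sigma)^\perp\ni 0$; both are valid and equivalent here.
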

\begin{proof}
Applying Theorem~\ref{teo:CLT} to the right-hand side of \eqref{eq:sizencQncW}, we see that
\begin{equation}\label{eq:comparisonQW}
\frac{|\mc Q_\ell(n)|}{|\mc W_\ell(n)|}\to \bb P \left( X_j\geq -\sqrt{6}/2,\; j=1,\hdots, \ell \right),
\end{equation}
where $(X_1,\hdots, X_\ell)$ is a centered Gaussian vector. The covariance matrix \eqref{eq:covariancematrixCLT} must be computed having in mind that the underlying vectors $(\wt N_1(n),\hdots, \wt N_\ell(n))$ are all coming from i.i.d. dice $\mb D_{U,\ell}(n)$, and thanks to Lemma~\ref{lem:prqscontdistr} this matrix is explicitly given by $\gamma_k(\infty)=-1/2$, for every $k$.

Because the vector $(X_1,\hdots X_\ell)$ is centered, we must have that $\bb P((X_1,\hdots, X_\ell)\in  B_\varepsilon )>0$ for every ball $B_\varepsilon\subset \bb R^\ell$ centered at the origin with radius $\varepsilon>0$. By choosing $\varepsilon>0$ sufficiently small, we can make sure that $B_\varepsilon\subset \{(x_1,\hdots, x_\ell)\in \bb R^\ell \mid x_i\geq -\sqrt{6}/2, \; i=1,\hdots, \ell\}$, and therefore
$$
\bb P \left( X_j\geq -\sqrt{6}/2,\; j=1,\hdots, \ell \right)\geq \bb P ((X_1,\hdots X_\ell)\in B_\varepsilon)>0
$$
Having in mind that $(\log|\mc W_{\ell}(n)|)/n=(\log|\mc D_{\ell}(n)|)/n \to \ell \log \ell$ (see \eqref{eq:subexpondecay}), the result now follows from \eqref{eq:comparisonQW}.
\end{proof}

As said, we will concatenate words in $\mc Q_\ell(n)$ with a new word $\mb S$ to produce a large set of intransitive words. Since words in $\mc Q_\ell(n)$ are not necessarily intransitive, we need to choose this new word $\mb S$ to be ``highly intransitive'', to counterbalance the transitivity of words in $\mc Q_\ell(n)$. We present such a word $\mb S$ in the next lemma.
    \begin{lemma}\label{lem:specialseqwords}
        There exists an increasing sequence $(n_k)_k\subset\mathbb N$ with the following property: for each $k$ there is $\mb S_k\in \mc W_{\btt, \ell}(n_k)$  fulfilling
        \[
            N_{k}(\mb S_k) \ge \frac{n_k^2}{2}+\frac{1}{2}n_k^{\frac{3}{2}(1+\frac{1}{18})}, \quad \text{for} \quad k=1, \hdots, \ell,
        \]
        and for each $k$, $n_k^{1+{1}/{36}}\in\mathbb{N}$.
    \end{lemma}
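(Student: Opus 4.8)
The plan is to construct \emph{all} the words $\mb S_k$ from a single fixed intransitive word by a purely deterministic face-splitting (blow-up) procedure, so that the Central Limit Theorem plays no role in this particular statement --- it is needed only for the companion estimate on $|\mc Q_\ell(n)|$.

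First I would fix a seed word. By Proposition~\ref{prop:intrwords}--(ii),(iii) and induction on $\ell$, the set $\mc W_{\btt,\ell}(n_0)$ is non-empty for $n_0=3$ and every $\ell\geq 3$; pick $\mb W_0\in\mc W_{\btt,\ell}(n_0)$ and set $\mu_k\coloneqq N_k(\mb W_0)-n_0^2/2$, which by \eqref{eq:ineqNijintransitivity} is positive and, being an integer minus $n_0^2/2$, satisfies $\mu_k\geq \tfrac12$. For $t\in\mathbb N$ let $\mb W_0^{[t]}\in\mc W_\ell(tn_0)$ be the word obtained by replacing each letter of $\mb W_0$ by $t$ consecutive copies of that same letter. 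The one computation to carry out is the effect of this operation on the counting functions $N_{i,j}$ from \eqref{eq:deffNijD}: reading off the bijection $\pi$ of Section~\ref{sec:dicewords}, an occurrence of $D^{(i)}$ located to the left of an occurrence of $D^{(j)}$ in $\mb W_0$ gives rise to exactly $t^2$ such ``$D^{(i)}$ before $D^{(j)}$'' pairs in $\mb W_0^{[t]}$ (the $t$ copies of the $D^{(i)}$ all remain to the left of each of the $t$ copies of the $D^{(j)}$, and no new $D^{(i)}$-before-$D^{(j)}$ pair is created inside a block), so that $N_{i,j}(\mb W_0^{[t]})=t^2\,N_{i,j}(\mb W_0)$ for all $i\neq j$.

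Since each letter occurs $tn_0$ times in $\mb W_0^{[t]}$, this identity yields $N_k(\mb W_0^{[t]})-(tn_0)^2/2=t^2\mu_k\geq t^2/2>0$ for every $k$, hence $\mb W_0^{[t]}\in\mc W_{\btt,\ell}(tn_0)$ by \eqref{eq:ineqNijintransitivity}, with intransitivity margin growing \emph{quadratically} in its size $tn_0$ --- far more than the $n^{19/12}$ that is required. It then remains only to choose $t$ so as to meet the arithmetic constraint $n^{1+1/36}\in\mathbb N$. I would take, for $j\geq 1$,
\[
n_j\coloneqq(n_0j)^{36},\qquad t_j\coloneqq n_0^{35}j^{36},\qquad \mb S_j\coloneqq\mb W_0^{[t_j]}.
\]
Then $t_jn_0=(n_0j)^{36}=n_j$, so $\mb S_j\in\mc W_{\btt,\ell}(n_j)$; the sequence $(n_j)_j$ is strictly increasing; $n_j^{1+1/36}=(n_0j)^{37}\in\mathbb N$; and, using $\tfrac32(1+\tfrac1{18})=\tfrac{19}{12}$ together with $36\cdot\tfrac{19}{12}=57\leq 70$ and $57\leq 72$,
\[
N_k(\mb S_j)-\frac{n_j^2}{2}\;=\;t_j^2\mu_k\;\geq\;\frac12\,n_0^{70}j^{72}\;\geq\;\frac12\,(n_0j)^{57}\;=\;\frac12\,n_j^{19/12}\qquad(j\geq1),
\]
which is exactly the claimed bound.

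As for where the difficulty lies: this lemma has essentially no analytic content, and I do not expect a genuine obstacle. The only substantive point is the observation that splitting each face of a fixed intransitive configuration into many tightly clustered faces multiplies every victory count $N_{i,j}$ by the square of the splitting factor, so that the margin becomes a fixed positive fraction of the maximal possible value $n^2/2$; this already overshoots any bound of the form $cn^{\theta}$ with $\theta<2$. The particular exponents $\tfrac1{18}$ and $\tfrac1{36}$ are immaterial to the construction and are dictated solely by the downstream bookkeeping in Section~\ref{sec:stringsAsymptotics}, where $\mb S_j$ is to be concatenated with an ``almost intransitive'' word from $\mc Q_\ell(n_j^{1+1/36})$, whose margin deficit is $O(n_j^{(3/2)(1+1/36)})=o(n_j^{19/12})$, so that the concatenation is intransitive and has $(1+o(1))\,n_j^{1+1/36}$ faces.
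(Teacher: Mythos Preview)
Your proof is correct and takes a genuinely different route from the paper's. The paper constructs an explicit word by hand for each $k$: with $t=8k^3$ and $n_k=t^{12}$, it writes down the specific block word
\[
\mb S_k = D^{(\ell)}_{[t^7]}D^{(1)}_{[t^{12}/2]}D^{(2)}_{[t^{12}-t^7]}D^{(3)}_{[t^{12}]}\cdots D^{(\ell-1)}_{[t^{12}]}D^{(\ell)}_{[t^{12}-t^7]}D^{(1)}_{[t^{12}/2]}D^{(2)}_{[t^7]}
\]
and leaves the verification of the counts to the reader. Your approach is cleaner and more conceptual: you start from an arbitrary seed $\mb W_0\in\mc W_{\btt,\ell}(3)$ and observe that the letter-repetition map $\mb W_0\mapsto \mb W_0^{[t]}$ satisfies $N_{i,j}(\mb W_0^{[t]})=t^2 N_{i,j}(\mb W_0)$, so that the intransitivity margin scales like $t^2$, i.e., like $n^2$ rather than the $n^{19/12}$ actually required. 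This buys you a one-line verification and shows transparently why the lemma has no analytic content; the paper's ad hoc word achieves the same margin order (indeed, its word is itself essentially a blow-up of a small block pattern) but the mechanism is less visible. Your handling of the arithmetic constraint $n_j^{1+1/36}\in\mathbb N$ via $n_j=(n_0 j)^{36}$ is also a bit more natural than the paper's choice $n_k=(8k^3)^{12}$, though both work.
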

    
    \begin{proof}
            For each $k\in \bb N$, set $n_k\coloneqq (8k^3)^{12}$. With $t\coloneqq 8k^3\in \bb N$, the word 
            \[
                \mb S_k \coloneqq D^{(\ell)}_{[t^7]}D^{(1)}_{[t^{12}/2]}D^{(2)}_{[t^{12}-t^7]}D^{(3)}_{[t^{12}]}\cdots D^{(\ell-1)}_{[t^{12}]}D^{(\ell)}_{[t^{12}-t^7]}D^{(1)}_{[t^{12}/2]}D^{(2)}_{[t^7]},
            \]
            where 
            \[
                D^{(i)}_{[j]}\coloneqq \underbrace{D^{(i)}\dots D^{(i)}}_{j},
            \]
            fulfils all the conditions of the statement.
    \end{proof}

    We are finally ready to prove Theorem~\ref{thm2}.

    \begin{proof}[Proof of Theorem~\ref{thm2}]

As observed in \eqref{eq:subexpondecay}, we already know that $L(\ell)\le \ell\log\ell$. The remaining task is to prove the converse inequality.
        
Let $(\mb S_k)$ be the sequence of words from Lemma~\ref{lem:specialseqwords}, and set $m_k\coloneqq n_k^{1+1/36}$. We claim that for any $k$ sufficiently large, given any word $\mb W \in \mc Q_\ell(m_k)$, the concatenated word $\mb W\mb S_k$ is intransitive, that is, $\mb W\mb S_k\in \mc W_{\btt, \ell}(n_k+m_k)$. In fact, for each $i=1,\dots, \ell$ and for sufficiently large $k$, and recalling \eqref{eq:deffNijD} and that $N_i(\cdot)=N_{i,i+1}(\cdot)$, we compute
        \begin{align*}
            2N_{i, i+1}(\mb W \mb S_k) &\;=\; 2N_{i}(\mb W)+2N_{i}(\mb S_k)+2m_kn_k\\
            &\; > \; m_k^2-m_k^{\frac{3}{2}}\left(1+\frac{1}{2m_k}\right)^{\frac{1}{2}}+n_k^2+n_k^{\frac{3}{2}(1+\frac{1}{18})}+2m_kn_k\\
            &\; = \; (m_k+n_k)^2+n_k^{\frac{3}{2}(1+\frac{1}{18})}-n_k^{\frac{3}{2}(1+\frac{1}{36})}\left(1+\frac{1}{2m_k}\right)^{1/2}\\
            &\; = \; (m_k+n_k)^2 + n_k^{\frac{3}{2}(1+\frac{1}{36})}\left(n_k^{1/24}-\left(1+\frac{1}{2m_k}\right)^{1/2} \right)\\
            &\; \ge \; (m_k+n_k)^2.
        \end{align*}
    
As the map $\mb W \mapsto \mb W\mb  S_k$ is one-to-one, we have $|\mc W_{\btt, \ell}(m_k+n_k)|\ge|\mc Q_{\ell}(m_k)|$, so
        \begin{align*}
            \ell\log\ell&\;=\;\lim_{k\to\infty}\frac{\log|\mc Q_{\ell}(m_k)|}{m_k}\\
            &\;\le\; \lim_{k\to\infty} \frac{\log|\mc W_{\btt, \ell}(m_k+n_k)|}{m_k}\\
            &\;=\; \lim_{k\to\infty} \frac{\log|\mc W_{\btt, \ell}(m_k+n_k)|}{m_k+n_k}\cdot \frac{m_k+n_k}{m_k}
            \;=\; L(\ell)\lim_{k\to\infty}\frac{n_k^{1+1/36}+n_k}{n_k^{1+1/36}}
            \;=\; L(\ell),
        \end{align*}
        which completes the proof.
    \end{proof}

    \bibliographystyle{plain}
    \bibliography{main}

\end{document}